\newtheorem{observation}{Observation}
\newsavebox{\abstractbox}
\renewenvironment{abstract}
{\begin{lrbox}{0}\begin{minipage}{\textwidth}
			\begin{center}\normalfont\sectfont\abstractname\end{center}\quotation}
		{\endquotation\end{minipage}\end{lrbox}%
	\global\setbox\abstractbox=\box0 }
\newtheorem{theorem}{Theorem}[section]
\DeclareTextFontCommand{\textmyfont}{\myfont}
\newcolumntype{L}[1]{>{\raggedright\let\newline\\\arraybackslash\hspace{0pt}}p{#1}}
\newcolumntype{C}[1]{>{\centering\let\newline\\\arraybackslash\hspace{0pt}}p{#1}}
\newcolumntype{R}[1]{>{\raggedleft\let\newline\\\arraybackslash\hspace{0pt}}p{#1}}
\renewcommand{\emph}[1]{\textit{#1}}
\begin{document}
\emergencystretch 3em



\newacronym[plural=DASs,firstplural=Demand Adaptive Systems (DASs)]{DAS}{DAS}{Demand Adaptive System}
\newacronym{DRS}{DRS}{Demand Responsive Systems}



\newacronym{HA}{CHA}{Consesus-based heuristic approach}




\newacronym{MDP}{MDP}{Markov decision process}





\newacronym{SDVRP}{SDVRP}{stochastic dynamic vehicle routing problem}
\newacronym{2S-SP}{2S-SP}{Two-stage stochastic program}
\title{\large Operational route planning under uncertainty for Demand Adaptive Systems}

\author[1]{\normalsize Benedikt Lienkamp}
\author[2]{\normalsize Mike Hewitt}
\author[3]{\normalsize Axel Parmentier}
\author[4]{\normalsize Maximilian Schiffer}
\affil{\small 
	TUM School of Management, Technical University of Munich, 80333 Munich, Germany
	
	\scriptsize benedikt.lienkamp@tum.de

    \small
    \textsuperscript{2}Quinlan School of Business, Loyola University Chicago, USA

    \scriptsize mhewitt3@luc.edu

    \small
    \textsuperscript{3}CERMICS, École des Ponts, Marne-la-Vallée, France

    \scriptsize axel.parmentier@enpc.fr
    
	\small
	\textsuperscript{4}TUM School of Management \& Munich Data Science Institute,
	
	Technical University of Munich, 80333 Munich, Germany
	
	\scriptsize schiffer@tum.de}

\date{}

\lehead{\pagemark}
\rohead{\pagemark}

\begin{abstract}
\begin{singlespace}
{\small\noindent With an increasing need for more flexible mobility services, we consider an operational problem arising in the planning of \acrlong{DAS}s. Motivated by the decision of whether to accept or reject passenger requests in real time in a \acrlong{DAS}, we introduce the operational route planning problem of \acrlong{DAS}s. To this end, we propose an algorithmic framework that allows an operator to plan which passengers to serve in a \acrlong{DAS} in real-time. To do so, we model the operational route planning problem as a \acrlong{MDP} and utilize a rolling horizon approach to approximate the \acrlong{MDP} via a two-stage stochastic program in each timestep to decide on the next action. Furthermore, we determine the deterministic equivalent of our approximation through sample-based approximation. This allows us to decompose the deterministic equivalent of our two-stage stochastic program into several full information planning problems, which can be solved in parallel efficiently. Additionally, we propose a consensus-based heuristic and a myopic approach. 
We perform extensive computational experiments based on real-world data provided to us by the public transportation provider of Munich, Germany. We show that our exact decomposition yields the best results in under five seconds, and our heuristic approach reduces the serial computation time by 17 - 57\% compared to our exact decomposition, with a solution quality
decline of less than one percent.
From a managerial perspective, we show that by switching a fixed-line bus route to a \acrlong{DAS}, an operator can increase profit by up to 49\% and the number of served passengers by up to 35\% while only increasing the travel distance of the bus by 14\%.
Furthermore, we show that an operator can reduce their cost per passenger by 43 - 51\% by increasing route flexibility and that incentivizing passengers to walk slightly longer distances reduces the cost per passenger by 83-85\%.

}
{\footnotesize\noindent \textbf{Keywords:} semiflexible transit; demand adaptive systems; stochastic programming}
\end{singlespace}
\end{abstract}

\maketitle
\setcounter{equation}{0}
\section{Introduction}
One of the development goals outlined in the United Nations' Agenda 2030 for cities \citep{UN2030} is to preserve sustainable, affordable, and accessible mobility. 
Public transportation systems are essential for achieving this goal as they provide access to essential services, education, and employment opportunities for individuals with varying socio-economic backgrounds. Public transportation systems enhance land use efficiency and mitigate urban sprawl, fostering the development of interconnected and compact communities, and reinforcing social bonds. Building and reinforcing extensive public transportation networks enables cities to enhance organizational efficiency, increase ecological sustainability, and broaden societal inclusivity.

In a dense urban area, traditional fixed-line public transportation systems typically have high utilization, particularly during daytime hours, due to a concentrated population that does not have its own personal transportation. 
In contrast, in sparse rural areas, these fixed-line systems, usually operated by busses, exhibit lower utilization in general, reflecting the reduced population density and corresponding lower demand for such services \citep{VDV2020}. This contrast between urban and rural areas shows that the efficiency of public bus transportation systems with fixed routes and schedules depends on the respective operating environment. 

Demand Responsive Systems, originally introduced as Dial-a-Ride, offered door-to-door services, especially catering to users with reduced mobility (\citealp{wilson1971scheduling}; \citealp{ioachim1995request}; \citealp{toth1996fast}). These systems constitute an opportunity to increase efficiency in low-demand scenarios because they do not operate on fixed bus routes and timetables. Accordingly, they provide increased flexibility and can adjust to varying individual transportation needs, allowing for more personalized services while still enabling resource sharing and, thus, a high utilization. However, a drawback of Dial-a-Ride lies in the higher operational costs associated with providing customized services and accommodating specific travel requests \citep{ho2018survey}. Moreover, the real-time nature of pickups and drop-offs, involving significant route changes during operation, can result in unpredictable travel times for passengers, making it less suitable for individuals with time-sensitive commitments.

\glspl{DAS}, as proposed by \cite{malucelli1999demand} and \cite{quadrifoglio2007insertion}, integrate Demand Responsive Systems into the framework of scheduled bus transportation. In the context of \glspl{DAS}, a designated bus line follows a conventional transit route with predetermined compulsory stops, adhering to a fixed schedule with specified time windows for departures at those compulsory stops. In addition to these compulsory stops, passengers have the flexibility to make requests at predefined non-compulsory stops, prompting the system to accommodate detours in the vehicle's route (see Figure~\ref{fig: DAS line}). In the following, we refer to these non-compulsory stops as optional stops.

\begin{figure}[t]
\centering
\begin{subfigure}{0.45\textwidth}
  \centering
    \includegraphics[width=\columnwidth]{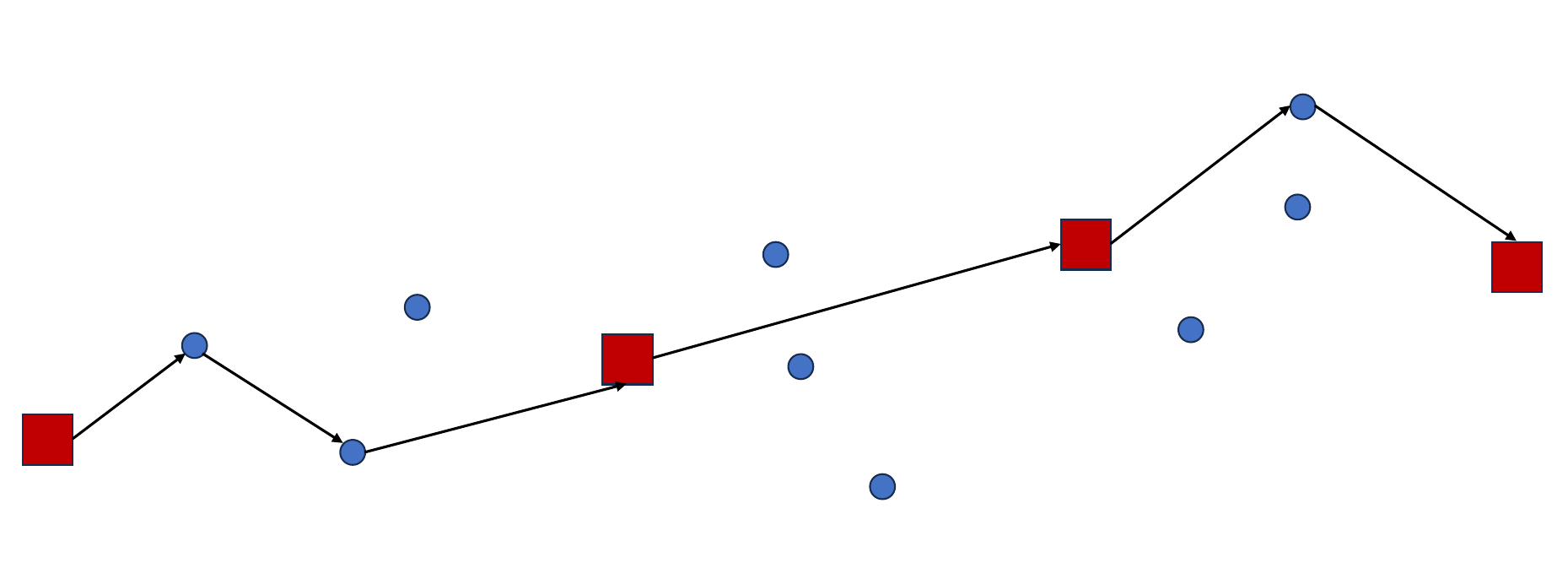}
    \captionsetup{format=hang}
	\caption{Route variant 1}
\end{subfigure}
\begin{subfigure}{0.45\textwidth}
  \centering
    \includegraphics[width=\columnwidth]{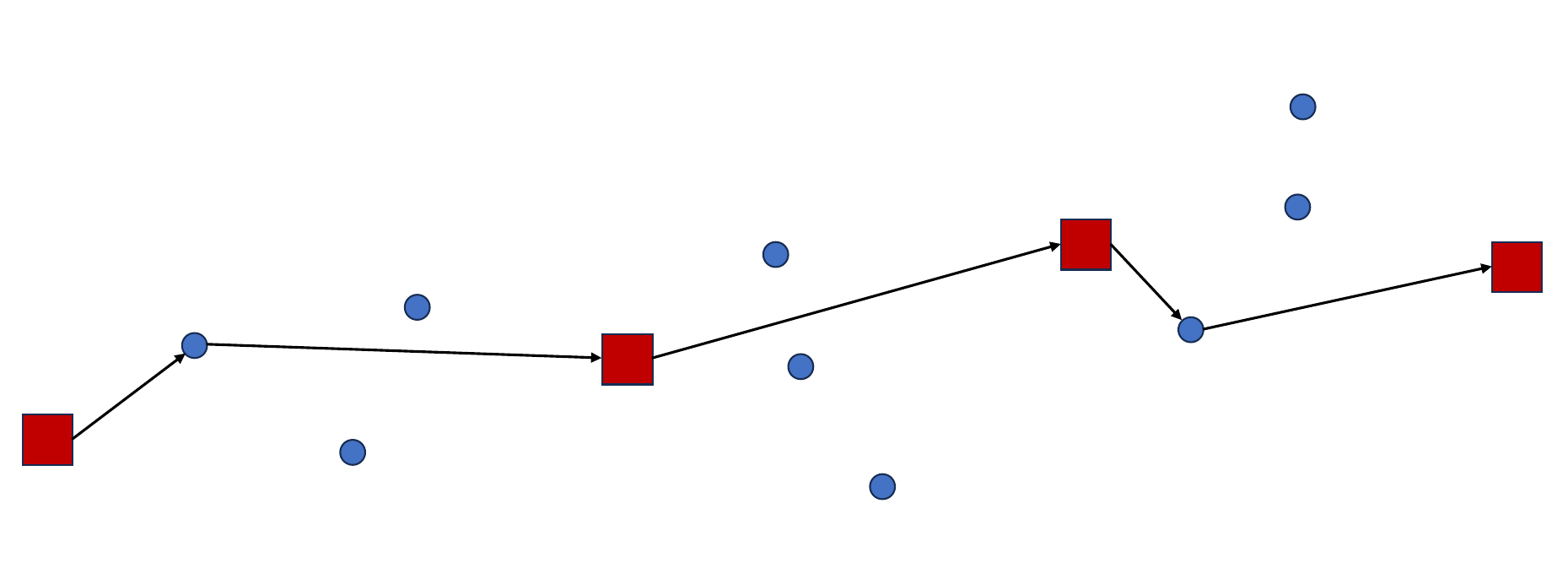}
    \captionsetup{format=hang}
    \caption{Route variant 2}
\end{subfigure}
\caption{A \gls{DAS} line with compulsory stops (red) and optional stops (blue)}
\label{fig: DAS line}
\end{figure}

The fundamental premise of \glspl{DAS} is that the consistency and predictability of public transportation are necessary features. These attributes enable passengers to plan their journeys, allow integration with other modes of transportation, and provide accessibility to the service without necessitating advanced bookings as is the case in Demand Responsive Systems. Here, the reliability inherent in a regular schedule is deemed a crucial asset for public transit, enhancing the overall user experience and optimizing the utilization of transportation infrastructure. Given that premise, a \gls{DAS} introduces a more personalized transportation system, capable of serving a larger proportion of passengers compared to traditional fixed bus lines, all at a reduced cost compared to on-demand door-to-door services \citep{errico2013survey}. This combination of flexibility and predictability effectively combines the strengths of conventional fixed bus lines and Dial-a-Ride, mitigating their respective limitations.

Effectively planning the operations of a \gls{DAS} is a challenging task, given the need to find a balance between the fixed and flexible attributes of traditional and on-demand systems. Additionally, the long-term viability of many public transportation systems depends at least in part on their revenue \citep{farelook}. This planning process entails two phases: a tactical service design phase, wherein bus routes with compulsory and optional stops, as well as schedules, are initially established, and a subsequent operational phase, wherein continuous adjustments to bus routes and schedules in response to user requests are performed. The complexities associated with planning \glspl{DAS} operations have been investigated in existing literature, and \cite{errico2013survey} provide a comprehensive review of the methodological aspects involved in this task.

This paper focuses on the operational phase. More precisely, the problem of determining in real time whether to accept or reject passenger requests of a \gls{DAS} to maximize the operators' profit, which is pivotal for operating a \gls{DAS} economically. This problem setting is similar to a dynamic vehicle routing problem \citep{pillac2013review} with the distinction, that we consider part of the route, i.e., the compulsory stops, to be fixed with a given schedule. \cite{crainic2005meta} analyze this problem under the assumption that the operator has full information, i.e., all passenger requests are known a priori to the acceptance or rejection decision, and develop and compare different meta-heuristics to solve that problem. As this assumption entails long response times towards the passengers requesting service, we explicitly consider that passenger requests occur over time and are not known in advance. This leads to a dynamic and stochastic optimization problem and, thus, short response times whether a passenger request is accepted or rejected.

In conclusion, the planning process of the operations of semi-flexible transit systems and \glspl{DAS} has been studied from tactical and operational perspectives under several system assumptions utilizing simulations, continuous approximations, and combinatorial optimization. 
So far, operational \gls{DAS} problems have only been considered under full demand information in \cite{crainic2005meta}, which does not allow an operator to immediately give feedback to a passenger about the acceptance or rejection decision. Relatedly, the literature on dynamic vehicle routing problems has not considered the setting in which customers are inserted into a pre-determined route and schedule.

To close the research gap outlined above, we propose an algorithmic framework to solve the operational route planning problem of a \gls{DAS}. To this end, we model the operational route planning problem under uncertainty as a \gls{MDP} and utilize a rolling horizon approach to approximate the \gls{MDP} via a two-stage stochastic program in each timestep to decide whether to accept or reject a passenger. We leverage a sample-based approximation of that stochastic program to formulate multiple deterministic problems that can be solved in parallel. Additionally, we propose a heuristic that leverages a consensus-based metric similar to \cite{pisinger2024consensus} to determine which requests to accept.

We utilize this algorithmic framework for a real-world case study in the city of Munich and show that our parallelized decomposition approach outperforms our heuristic and myopic approaches in terms of solution quality. 
Additionally, we see that our exact decomposition yields the best results in under five seconds, and our heuristic approach can reduce the serial computation time by half compared to our exact decomposition, with a solution quality decline of less than one percent. 
We also show that even considering a limited number of future demand scenarios suffices to obtain a good solution quality, especially in high flexibility scenarios. 
From a managerial perspective, we show that by switching a fixed-line bus route to a \gls{DAS}, an operator can increase revenue by up to 49\% and the number of served passengers by up to 35\% while only increasing the travel distance of the bus by 14\%.
Lastly, we show that even though an operator can reduce their cost per passenger by 43 - 51\% by increasing route flexibility, incentivizing passengers to walk 250 meters instead of only 100 meters to their boarding and alighting stops reduces this cost by 83-85\%.

\textbf{Contribution:}
Our contributions are threefold. First, we present a new model of a real-time operational planning problem for \gls{DAS}s that explicitly recognizes their dynamic and stochastic nature. Second, we propose multiple algorithms for solving instances of this model that produce high-quality solutions with some yielding solutions in just seconds with only a small degradation in solution quality. Third, we study the performance of a \gls{DAS}, as compared to a fixed-line bus route, in multiple settings to illustrate the impacts of such systems on both transit system health and passenger experience. 

\textbf{Organization:}
The remainder of this paper is as follows. Section \ref{sec: literature} reviews related literature. We specify our problem setting in Section~\ref{sec: Problem Setting} and develop our methodology in Section~\ref{sec: Methodology}. In Section~\ref{sec: Results}, we describe our instance generation and present our numerical studies. Section~\ref{sec: Conclusion} concludes this paper by summarizing its main findings. 
\section{Literature Review}
\label{sec: literature}
Semi-flexible transit systems, which include \gls{DAS}, have been studied from several perspectives under multiple different assumptions. \cite{errico2013survey} categorize the planning decisions necessary to operate a semi-flexible transit system into strategic, tactical, and operational. 

Strategic planning decisions focus on the definition of the service area, the itineraries in terms of potential compulsory and optional stops, the level of service, i.e., frequencies and fleet size, and whether to use fixed bus routes, \gls{DAS}s, or another type of public transportation system. 
In this context, \cite{daganzo1984checkpoint} compares the cost between fixed route transit and two Dial-a-Ride variants to determine which type of transportation system is better suited to serve a given service area. \cite{quadrifoglio2009methodology} analyze the critical demand density below which their demand-responsive policy outperforms a fixed-route policy, and \cite{li2009optimal} develop an analytical model which determines the optimal number of independently operated zones, the service area should be divided into, to balance customer service quality and vehicle operating costs. \cite{pratelli2001mathematical} propose a mathematical formulation to define the service area of a semi-flexible line based on the total aggregated travel time of passengers spent on board during deviations from the original main route, the walking times, and the increase of waiting times at compulsory stops. \cite{qiu2015demi} study a so-called demi-flexible operating policy of a feeder system in Zhengzhou City (China). This feeder system can deviate from its fixed route to offer door-to-door service. The demi-flexible operating policy allows rejected passengers to utilize curb-to-curb stops of accepted passengers to carry out their trips. The authors show that under certain assumptions, this demi-flexible policy offers advantages in terms of affordability and efficiency compared to fixed-route and fully flexible route policies. \cite{viergutz2019demand} perform a simulation-based study in a rural area in Germany and show that stop-based and door-to-door \gls{DRS} are uneconomical in their rural service area because of their high operational costs. \cite{bischoff2019impact} perform a simulation-based study in the city of Cottbus (Germany) to determine the fleet size of a stop-based and a door-to-door \gls{DRS} and show that for autonomous vehicles, each of the two \gls{DRS} services is significantly cheaper during peak hours compared to fixed-line policies.

Tactical planning decisions focus on (i) which compulsory and optional stops to include in the design of a semi-flexible line, (ii) in which sequence the compulsory stops should be visited, (iii) the tour segments, i.e., between which two compulsory stops can an optional stop be visited, and (iv) the design of a so-called master schedule, which determines the time-windows of the compulsory stops. \cite{errico2013survey} categorize tactical planning according to the used solution method into continuous approximations and combinatorial optimization. As our methodology belongs to the later solution method, we only review combinatorial optimization approaches for \gls{DAS} in this paragraph. \cite{errico2008design} and later \cite{errico2011design} introduce the Single-line \gls{DAS} Design Problem (SDDP), which assumes the set of stops to be served by the \gls{DAS} line, the transportation demand, and the travel time between all stop pairs to be known. Based on this information, the SDDP determines the compulsory stops, their sequence, the \gls{DAS} segments, and the master schedule of the compulsory stops. The authors propose two decompositions of the SDDP consisting of the selection of the compulsory stops, the General Minimum Latency Problem (GMLP), which is a sequencing problem of the compulsory and optional stops, and the master schedule problem. \cite{errico2011design} address the selection of the compulsory stops and compare the two decompositions. \cite{errico2017benders} propose the traveling salesman problem with generalized latency (TSP-GL), a variant of the GMLP where the objective function combines the vehicle routing cost and the passenger's travel times, and develop a Benders' decomposition approach to solve it efficiently. \cite{lienkamp2023branch} build on this methodology and introduce the stochastic TSP-GL, incorporating stochastic passenger demand into the problem setting and solve it via a Branch-and-Price approach. They show that an operator can derive more robust solutions by considering stochastic passenger demand than in a deterministic setting. \cite{crainic2012designing} proposes a mathematical description and a solution method based on probabilistic approximations to solve the master schedule problem.
\newline
Lastly, operational planning determines vehicle and driver schedules as well as real-time adjustments to the \gls{DAS} route. There are two main operational problem categories: dynamic online and dynamic offline problems. \cite{vansteenwegen2022survey} give an extensive overview of this categorization. 
Dynamic online problems assume that requests can occur during the planning horizon and after the bus starts its trip. Here, \cite{pei2019operational} utilize a tabu search approach to reduce the number of visited stops in a fixed-bus line, maximizing the total system income, which is the income minus operating and passenger time costs. They show that during off-peak hours, their approach outperforms fixed-line operations. \cite{quadrifoglio2007insertion} propose an insertion heuristic for Mobility Allowance Shuttle
Transit (MAST), a special case of semi-flexible transit systems, which is described in detail in \cite{errico2013survey}. They show that their approach can be used to automate a MAST in Los Angeles (USA) and that their heuristic results are close to optimal solutions.

Dynamic offline strategies assume that requests can only be issued before the bus starts its trip. In this context, \cite{malucelli1999demand} introduce \gls{DAS} with three operational policies differing in user requirements. They propose linear integer formulations for these policies, which they solve through Lagrangian approaches and column generation, and combine these with heuristic approaches. \cite{malucelli2001adaptive} and \cite{crainic2005meta} introduce a GRASP-based memory-enhanced constructive heuristic and a tabu search meta-heuristic to solve the operational problem of \gls{DAS} under full information. Additionally, they combine the two heuristic approaches, resulting in multiple hybrid approaches, and show that for simple instances, a tabu search-based approach performs best. In contrast, a hybrid approach yields the best results for more complex instances. \cite{qiu2014dynamic} study a MAST system in Los Angeles (USA) and show that their dynamic stop strategy can reduce user cost by up to 30\%. 
We refer to \cite{errico2013survey} and \cite{vansteenwegen2022survey} for an in-depth overview of semi-flexible and demand-responsive systems.

More generally, the \gls{SDVRP} is an extension of the vehicle routing problem where both uncertainty, e.g., random customer demands or travel times, and real-time changes are considered. There are four types of methods commonly applied to solve \gls{SDVRP}s. Reoptimization approaches only use information that is currently available to the operator (\citealp{branchini2009adaptive}; \citealp{gendreau2006neighborhood}). Policy function approximations aim to imitate effective decision-making in practice (\citealp{branke2005waiting}; \citealp{thomas2004anticipatory}). Cost function approximations learn the value of a policy via repeated simulations and training (\citealp{riley2020real}; \citealp{al2020approximate}). Look-ahead algorithms sample stochastic information and derive a policy based on the sampled scenarios (\citealp{schilde2014integrating}; \citealp{azi2012dynamic}). We refer to \cite{soeffker2022stochastic} and \cite{ulmer2020modeling} for an in-depth overview of modeling stochastic dynamic vehicle routing problems. 

In summary, semi-flexible transit systems and \gls{DAS} have been studied from strategic, tactical, and operational perspectives under several system assumptions utilizing simulations, continuous approximations, and combinatorial optimization. 
So far, operational \gls{DAS} route planning problems have only been considered in a static setting under full demand information in \cite{crainic2005meta}, which does not allow an operator to give feedback to a passenger about the acceptance or rejection decision in a dynamic way. Furthermore, stochastic dynamic vehicle routing problems do not consider fixed routes and schedules.

\section{Problem description and mathematical formulations}
\label{sec: Problem Setting}
A \gls{DAS} route consists of \textit{compulsory stops} at which the bus performing the route has to arrive at predefined \textit{time windows}. 
We define a set of \textit{optional stops} for each pair of consecutive compulsory stops that may be served by the vehicle when driving from one compulsory stop to the next. Travel times between all stops and service times at stops are known. Passengers seek transportation between pick-up and drop-off locations. To do so, they issue \textit{requests}, which induce sets of pick-up and drop-off stops within a given walking distance from their corresponding pick-up and drop-off location determined by the operator. These pick-up and drop-off stops are subsets of all compulsory and optional stops. 
If we accept only passenger requests whose pick-up and drop-off stops consist of compulsory stops, the vehicle's route consists only of the compulsory stops, whose order is pre-determined and can be inferred from the time windows at compulsory stops. If we accept a passenger request whose induced pick-up or drop-off stops contain an optional stop, the vehicle may reroute such that the route includes an optional stop. We assume an uncapacitated vehicle, i.e., a passenger request is only rejected if doing so renders visiting a compulsory stop during its time window impossible or the cost of doing so is too high.

Each passenger request, if served, induces a utility that may correspond to the fare charged for the particular request but may also account for service quality or priority indicators, e.g., if a request was already rejected in the previous route \citep{crainic2005meta}. In this work, we consider the utility to be the fare charged. We aim to maximize the profit of our tour, i.e., the difference between the revenue of accepted passenger requests and cost.

To explain the routing aspects of our problem, including the potential and impact of accepting or rejecting passenger requests, we first formulate the full information problem setting, which we will later utilize to decompose our operational route planning problem under uncertainty into multiple full information problems. Second, we introduce the operational route planning problem under uncertainty as an \gls{MDP}.

\subsection{Full information problem setting}
In our full information setting, information about all requests is known before taking any acceptance or rejection decision, and determining the vehicle's route. Similar to \cite{crainic2005meta}, we define the set of all compulsory stops as $H = \{f_1, f_2, ..., f_{n+1}\}$ and assume our route starts at compulsory stop $f_1$ and ends at compulsory stop $f_{n+1}$. Furthermore, there are predefined time windows $[a_h, b_h]$ for each compulsory stop $f_h, h = 1, \ldots, n+1$ at which the vehicle has to serve stop $f_h$. If the vehicle arrives earlier than $a_h$ at $f_h$, the vehicle must wait. We associate a set $F_h$ of optional stops to each pair of consecutive compulsory stops $\langle f_h, f_{h+1} \rangle$. The vehicle only serves an optional stop if we accept a request inducing pick-up or drop off at the stop. Sets $F_h$ are mutually disjoint. We define \textit{segment h} for each pair $\langle f_h, f_{h+1} \rangle$ as the directed graph $G_h = (N_h, A_h)$, where $N_h=F_h \cup \{f_h, f_{h+1}\}$ is the node set, and $A_h \subseteq N_h \times N_h$ is the set of arcs connecting the stops. Then, $G = \bigcup_h G_h$ is the entire graph. Travel costs $c_{ij}$ and travel times $\tau_{ij}$, which include service time at node $i$, for each arc $(i,j) \in A$ are given and positive.

\begin{figure}[!b]
\centering
    \includegraphics[width=0.9\columnwidth]{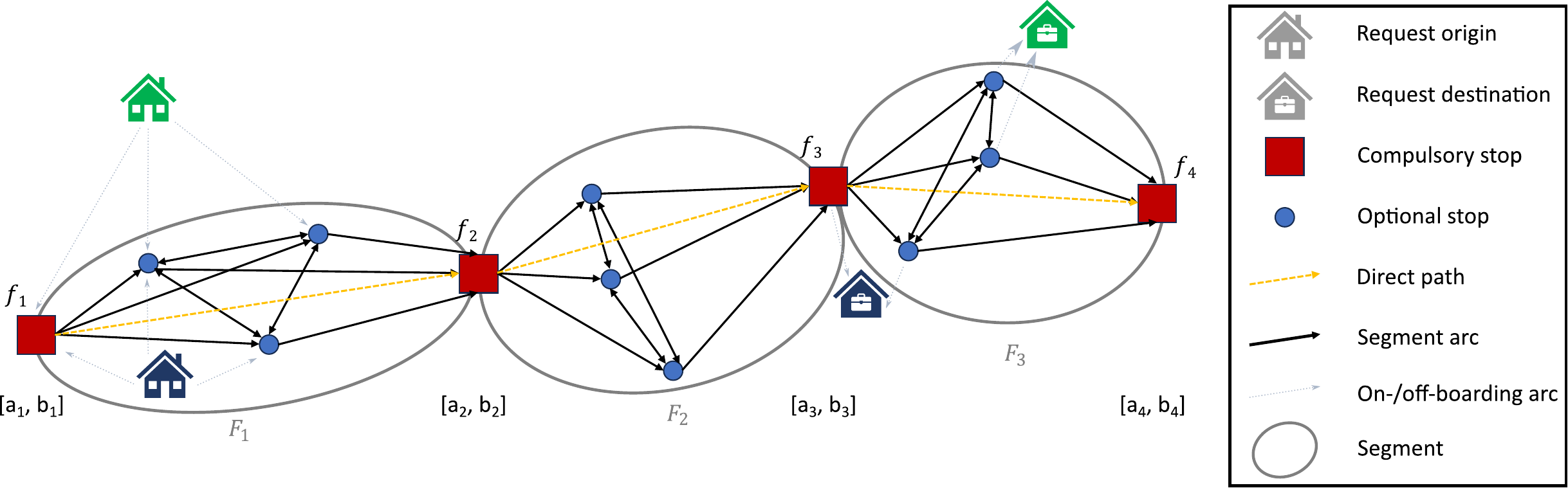}
\caption{Planned bus route with compulsory and optional stops before requests received}
\label{fig: offline problem}
\end{figure}

\begin{figure}[!t]
\centering
\begin{subfigure}{0.7\textwidth}
  \centering
    \includegraphics[width=\columnwidth]{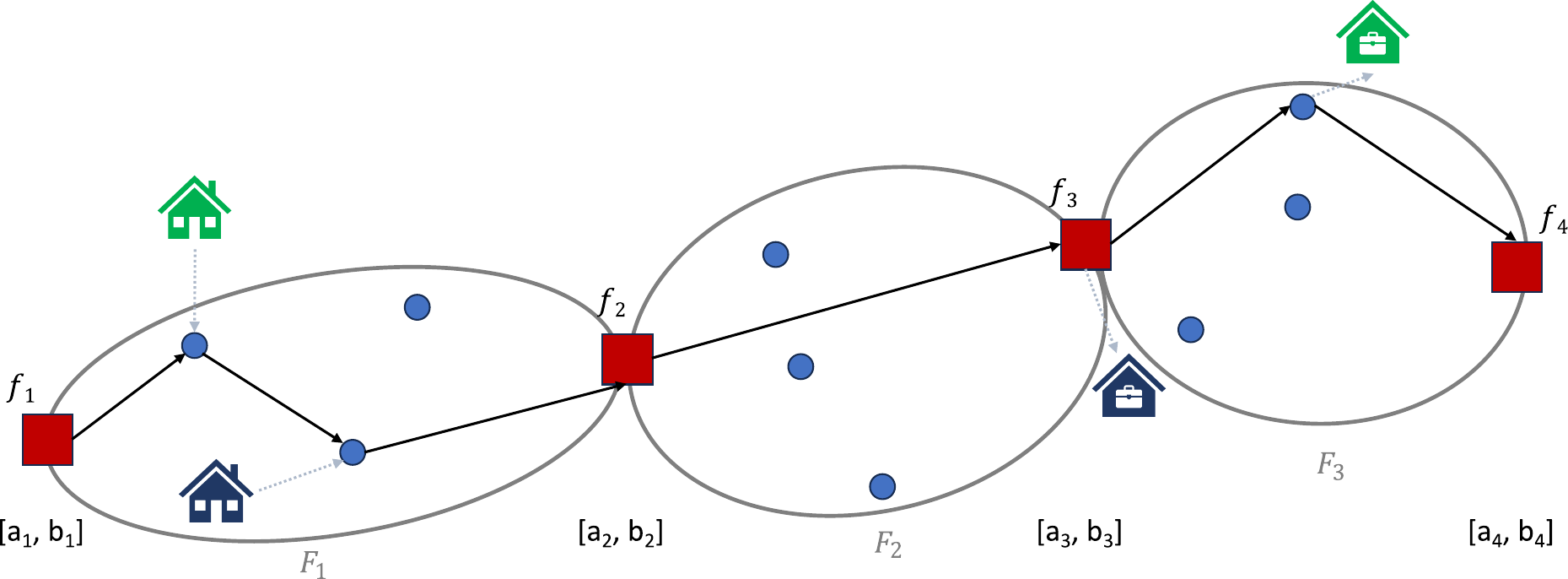}
    \captionsetup{format=hang}
	\caption{Route realization one with two accepted passenger requests}
\end{subfigure}

\begin{subfigure}{0.7\textwidth}
  \centering
    \includegraphics[width=\columnwidth]{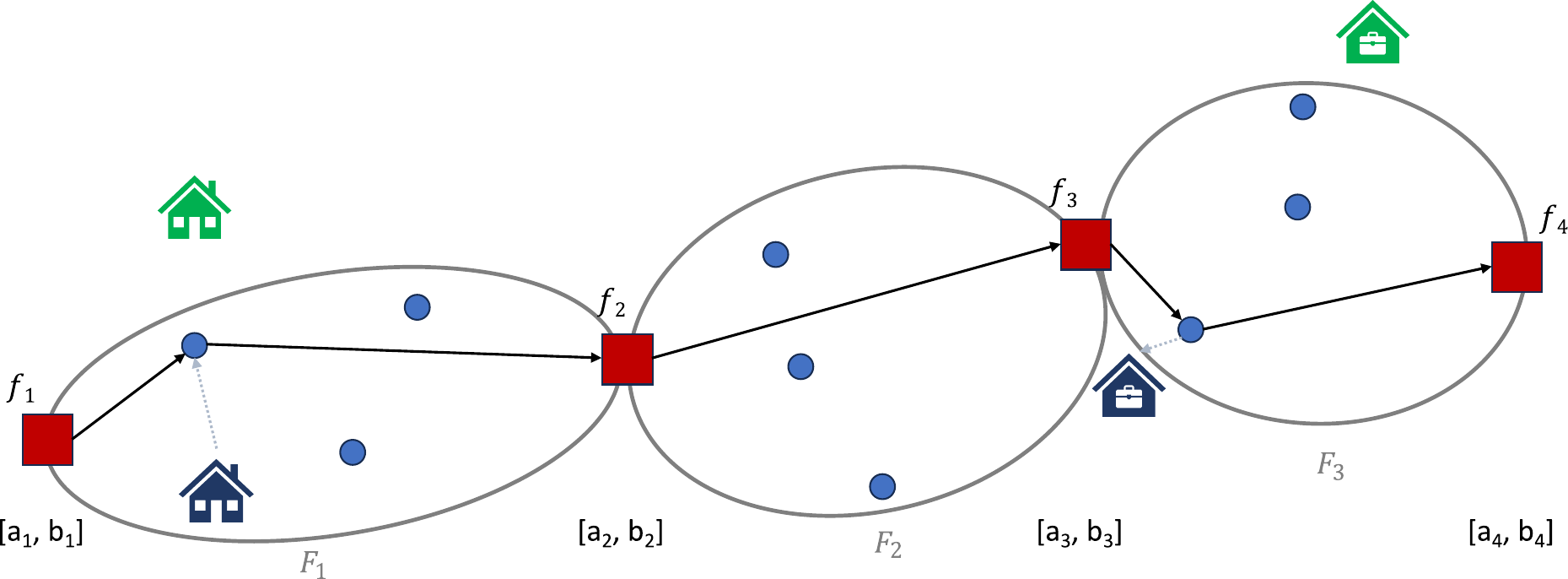}
    \captionsetup{format=hang}
    \caption{Route realization two with one accepted passenger request}
\end{subfigure}
\caption{Different bus routes for different accepted passenger requests}
\label{fig: Route options}
\end{figure}

We denote the \textit{request set} as $R$, where $r \in R$ is defined as a pair $\langle s(r), d(r) \rangle$ of on and off-boarding stops. Here, $s(r), d(r) \subseteq \bigcup_{h} N_h$ are the sets of all stops within a certain proximity of the origin or destination coordinates of request $r \in R$ respectively. For each $r \in R$, let $n_1 \in s(r)$ and $n_2 \in d(r)$ with $n_1$ and $n_2$ being an optional stop and let $h(n_1)$ and $h(n_2)$ denote the on and off-boarding segments of $r$, respectively. We assume that $h(n_1) < h(n_2)$ holds, i.e., $n_1$ and $n_2$ do not belong to the same segment. Accordingly, we do not have to consider precedence constraints between stops within the same segment. Additionally, we implicitly handle precedence constraints regarding on and off-boarding stops of each request by the sequencing of compulsory stops. 

Figure~\ref{fig: offline problem} shows an example of a problem setting with compulsory stops $f_h$ (red), optional stops (blue), time windows $[a_h, b_h]$, segments $F_h$ (grey), segment arcs $A_{h}$ (black), and on-/off-boarding arcs (dotted grey) defining the on and off-boarding stops. Additionally, there exists the direct path only including the compulsory stops (yellow), and two passenger requests represented by houses and work buildings.
Figure~\ref{fig: Route options} shows two possible route realizations of this problem setting. Let $y_r$ be the decision variables that indicate whether we accept request $r\in R$ $(y_r = 1)$ or not $(y_r = 0)$. In the first route realization, both requests are served, i.e., $y_{r_1} = y_{r_2} = 1$. In the second route realization, only one request is served, i.e., $y_{r_1} = 1$ and $y_{r_2} = 0$. Note that even though the second request (green) may enter the bus at the same station as in the first route variant, it can not reach its destination.

We associate a \textit{utility} $u(r) > 0$ with each request $r \in R$, and aim to maximize the profit of our tour, i.e., the difference between the utility of accepted passenger requests and cost. 
In this context, we use decision variables $x_{ij}$ to indicate whether arc $(i,j) \in A$ is used in the vehicle's tour $(x_{ij} = 1)$ or not $(x_{ij} = 0)$. Furthermore, decision variable $t_h$ is the starting time from all compulsory stops $f_h, h = 1, \ldots, n+1$ and $\delta_{s(r)}^i$ ($\delta_{d(r)}^i$) is a constant of value 1 if $i \in s(r)$ ($i \in d(r)$) and 0 otherwise.

We define $d_i$ as

\[
d_i =
\begin{cases}
    1, & \text{if } i = f_1, \\
    -1, & \text{if } i = f_{n+1}, \\
    0, & \text{otherwise}.
\end{cases}
\]

With this notation, we formulate the \gls{DAS} full information problem as follows.  

{
\allowdisplaybreaks
\begin{subequations}
\begin{align}
\max \quad \sum_{r \in R} u(r) y_r - \sum_{i,j \in A} c_{ij} x_{ij} \label{obj: offline arcs}
\end{align}
\begin{align}
y_r & \leq \sum_{i \in N} \sum_{j \in N^+(i)} \delta_{s(r)}^i x_{ij} & \forall r \in R\label{eq: req offline arcs origin}\\
y_r & \leq \sum_{j \in N} \sum_{i \in N^-(j)} \delta_{d(r)}^j x_{ij}  & \forall r \in R \label{eq: req offline arcs destination}\\
\sum_{j \in N^+(i)} x_{ij} - \sum_{j \in N^-(i)} x_{ji} & = d_i  & \forall i \in N \label{eq: flow conservation offline arcs}\\
t_h + \sum_{i,j \in A_h} \tau_{ij} x_{ij} & \leq t_{h+1} &  h=1, \ldots, n \label{eq: node times offline arcs}\\
a_h & \leq t_h \leq b_h &  h=1, \ldots, n+1 \label{eq: time windows offline arcs}\\
\sum_{i \in Q} \sum_{j \neq i, j \in Q} x_{ij} & \leq \vert Q \vert - 1 & \forall Q \subseteq N_h, \vert Q \vert \geq 2, h = 1, \ldots, n \label{eq: subtour elim}\\
y_r & \in \{0,1\} &  \forall r \in R \label{eq: dimension y offline arcs}\\
x_{ij} & \in \{0,1\} &  \forall i,j \in A\label{eq: dimension x offline arcs}
\end{align}
\label{prob: offline arcs}
\end{subequations}
}
Objective function~\eqref{obj: offline arcs} maximizes the operators profit according to the request utility $u(r)$ and the arc costs $c_{ij}$. Constraints~\eqref{eq: req offline arcs origin} and \eqref{eq: req offline arcs destination} ensure that a request $r \in R$ can only be served if one stop out of the set of on-boarding stops $s(r)$ and one stop of the set of off-boarding stops $d(r)$ are visited by the vehicle. Constraints~\eqref{eq: flow conservation offline arcs} ensure flow conservation at all optional and compulsory stops.
Note that we do not need to specify that all compulsory stops are included in the vehicle's tour as the graph structure implies that each compulsory stop $f_h$, $h = 1, \ldots, n$ only has ingoing arcs $N^-(f_{h})$ from segment $h$ and outgoing arcs $N^+(f_{h})$ to segment $f_{h+1}$, i.e., a path from $f_1$ to $f_{n+1}$ must include all compulsory stops.
Constraints~\eqref{eq: node times offline arcs} define the starting times $t_h$ at all compulsory stops. Constraints~\eqref{eq: time windows offline arcs} ensure that the time windows at the compulsory stops are met. 
Constraints~\eqref{eq: subtour elim} ensure that no subtours within the segments exist and are ensured through lazy constraints.
Finally, Constraints~\eqref{eq: dimension y offline arcs} and \eqref{eq: dimension x offline arcs} define the domains of $y_r$ and $x_{ij}$.

\subsection{Dynamic and stochastic problem setting}
\label{subsec: problem setting dynamic}
In the operational route planning problem under uncertainty, passengers issue requests one after another before the vehicle begins its route. We model decisions as being made according to a discrete time horizon $\theta = 1, \ldots, T - 1$, at which one new request $r_{\theta}$ may arrive, and we must decide whether to accept or reject that request. Similar to the full information setting, we assume each request to induce a pair of on and off-boarding stop sets $\langle s(r_{\theta}), d(r_{\theta}) \rangle$ with a utility $u(r_{\theta})$. At decision time $T < a_1$, we have to determine the optimal route serving all accepted requests and respecting all time windows at compulsory stops. 

We model our dynamic operational route planning problem under uncertainty as a finite-horizon MDP with the following state spaces, action spaces, and value functions.

\medskip

\noindent\textbf{System states:} At any event-based decision time $\theta = 1, \ldots, T$, the system state $\chi_{\theta}$ consists of previously accepted passenger requests $S_{\theta}$ that have to be fulfilled, a new passenger request $r_{\theta}$, and decision time $\theta$.
\medskip

\noindent\textbf{Action spaces:}
At decision time $\theta = 1, \ldots, T - 1$, our action space $\mathcal{A}(\chi_{\theta})$ entails either accepting or rejecting the new request $r_{\theta}$. If, due to the time windows at the compulsory stops, there is no route that can serve all previously accepted passengers $\chi_{\theta}$ and the current request, the action space will be reduced to rejecting the new request. This ensures feasibility at decision time $T$.
At decision time $T$, our action space $\mathcal{A}_T(\chi_{\theta})$ is the set of all routes serving all accepted requests $\chi_{\theta}$ and respecting all time windows at compulsory stops.
\medskip

\noindent\textbf{Value functions:} We define our value function at decision times $\theta = 1, \ldots, T - 1$ as follows.
\begin{equation}
    V_{\theta} (\chi_{\theta}) = \max_{a \in \mathcal{A}(\chi_{\theta})} \bigl\{ R(\chi_{\theta}, a) + \mathbb{E} \bigl[ V_{\theta + 1}(X(\chi_{\theta}, a, W_{\theta + 1}))\bigr] \bigl\}
\end{equation}

Here, $R(\chi_{\theta}, a)$ is the payoff from taking action $a \in \mathcal{A}(\chi_{\theta})$, i.e., accepting a new request $r_{\theta}$ yields a reward of $u(r_{\theta})$, and the system state changes from $\chi_{\theta}$ to $\chi_{\theta + 1} = X(\chi_{\theta}, a, W_{\theta + 1})$ through transition function $X(\cdot)$ when action $a$ is taken and the random variable $W_{\theta + 1}$ is realized. Here $W_{\theta + 1}$ is the random variable of a new request $r_{\theta + 1}$, arriving between decision time $\theta$ and $\theta + 1$. 
As we only have to decide the \gls{DAS} route after all passenger request decisions have been taken, our value function at decision time $T$ consists of the cost of serving all accepted passenger requests and respecting the compulsory stop's time windows. Accordingly, our value function at decision time $T$ is as follows.

\begin{equation}
    V_{T} (\chi_T) = \max_{a \in \mathcal{A}_T(\chi_{T})} R_T(a)
\end{equation}

Here, $R_T(a)$ < 0 is the cost of the \gls{DAS} route chosen through action $a$.

\noindent\textbf{Policy:} Let $S_{\Theta}$ denote the set of all possible system states. A policy $\delta : S_{\Theta} \rightarrow \mathcal{A}(\chi_{\Theta})$ is a mapping that assigns to any system state $\chi_{\Theta}$ a decision $a \in \mathcal{A}(\chi_{\Theta})$. 
\medskip

\noindent\textbf{Full information upper bound:} We can compute a full-information upper bound by supposing that all requests $\bigcup_{\theta} r_{\theta}$ are already known at timestep ${\theta} = 1$ and solve this problem via our full information model (Problem~\ref{prob: offline arcs}).
In this context, we define a sample path for a given decision time $\theta$ as one realization of requests at each subsequent decision time $\theta'$ with $\theta < \theta' < T$. We will use these sample paths in the methodology we present next to approximate our dynamic and stochastic problem with one that is stochastic but static as we presume knowledge of the distribution of sample paths.

\section{Methodology}
In this section, we first show how we utilize a rolling horizon look-ahead approach in which the decision of whether to accept or reject a request at a given decision time is made based on an estimate of its impact on both the ability to accept future requests and routing costs. This estimate is based on an approximation in which future requests remain uncertain but are static, as they are revealed simultaneously. To compute this estimate, we formulate a two-stage stochastic program in which the first stage reflects the decision to accept or reject the current request. The second stage problem presumes immediate and complete information for a given sample path and thus knowledge of all future requests that can be accepted or rejected. This second stage problem has an objective that reflects the revenues gained from already accepted requests and accepting future requests in periods before decision time $T$, and routing costs incurred at the last decision time $T$.

We then show how to utilize a sample-based approximation to approximate the expectation of the objective function of the second-stage problem through a multi-scenario deterministic model. Then, we show how to decompose our sample-based approximation into several full information models of the operational route planning problem (Section~\ref{chapter: sample based decomposition}). Afterwards, we propose a consensus-based heuristic approach that reduces the number of full information problems needed to solve the operational route planning problem under uncertainty by 50\% at each decision time $\theta$ (Section~\ref{chapter: heuristic}). Finally, we introduce a straightforward myopic approach (Section~\ref{chapter: greedy}) for benchmarking purposes.
\label{sec: Methodology}

\subsection*{Rolling horizon framework}
\label{sec: rolling horizon framework}
We use a rolling horizon framework to solve our MDP model. In general, the rolling horizon framework is a decision-making strategy in which a problem is repeatedly solved over a limited planning horizon that moves forward in time. At each step, a decision is implemented, and the horizon is shifted forward, allowing for continuous adaptation to new information and changing conditions. 
One approach to solve the operational route planning problem under uncertainty is to use a rolling horizon look-ahead framework in which the dynamic and stochastic nature of future requests is approximated as static but stochastic. 
Specifically, we decompose the problem of maximizing the total operators' profit at the end of the problem time horizon $T$ into multiple subsequent stochastic optimization problems at each decision time $\theta$ at which a new request $r_{\theta}$ arises. In these stochastic programs, we approximate the dynamic and stochastic future passenger requests through static and stochastic sample paths.
Another approach is to ignore all information about future passenger requests and use a myopic strategy. 

\subsection{Full information approximation}
\label{sec: full information}
In the following, we use a static approximation to capture the dynamic nature of our problem. To do so, at each decision time $\theta$, we assume a probability distribution that describes the likelihood of each potential set of passenger requests occurring. We then optimize our decision at $\theta$ given this distribution by solving a two-stage stochastic program. 

\subsubsection*{Two-stage stochastic program}
We define $y_{r_{\theta}}$ as the accept or reject decisions of the new passenger request $r_{\theta}$ at decision time~${\theta}$. Additionally, let $S_{\theta}$ be all passenger requests accepted up until decision time ${\theta}$, and let $r_{\theta}$ be the new passenger request as defined in Section~\ref{sec: Problem Setting}. Furthermore, we define $\bm{\xi_{\theta}}$ as the probability distribution of possible sample paths at decision time ${\theta}$ conditioned on the already accepted requests $S_\theta$ and $R(\bm{\xi_{\theta}})$ as a realization of $\bm{\xi_{\theta}}$. Note that each realization $R(\bm{\xi_{\theta}})$ is a sample path as defined in Section \ref{subsec: problem setting dynamic}, i.e., a set of requests that consists of all already accepted requests $S_{\theta}$, the new passenger request $r_{\theta}$, and possible future requests. Consequently, it holds that $r_{\theta} \in R(\bm{\xi_{\theta}})$ and $r \in R(\bm{\xi_{\theta}})$ for $r \in S_{\theta}$. Problems~\ref{prob: two stage} and~\ref{prob: two stage Q} define the first and second stages of our stochastic program for each decision time $\theta$ that we solve when using our full information approximation. 
Note that instead of including the utility of serving all previously accepted requests and potentially serving our current passenger request in the first stage of our stochastic programs, we choose to include this utility in the first sum of the objective function of the second stages $Q(S_{\theta}, y_{r_{\theta}}, \bm{\xi_{\theta}})$ (see Problem~\ref{prob: two stage Q}). Even though this utility is deterministic, i.e., independent from the distribution of future passenger requests, this notation allows for an easier decomposition into independent full information problems in Section~\ref{chapter: sample based decomposition}.

{
\allowdisplaybreaks
\begin{subequations}
\begin{align}
\max \quad \mathbb{E} [Q(S_{\theta}, y_{r_{\theta}}, \bm{\xi_{\theta}})] \label{obj: two-stage}
\end{align}
\begin{align}
y_{r_{\theta}} & \in \{0, 1\}
\end{align}
\label{prob: two stage}
\end{subequations}
}

{
\allowdisplaybreaks
\begin{subequations}
\begin{flalign}
Q(S_{\theta}, y_{r_{\theta}}, \bm{\xi_{\theta}}) = \max \quad \sum_{r \in \xi_{\theta}} u({r}) y_{r} -\sum_{i, j \in A} c_{ij} x_{ij} \label{obj: two-stage Q}&&
\end{flalign}
\begin{align}
y_{r} & = 1 & \forall r \in S_{\theta} \label{eq: two stage accept}\\
y_{r} - \sum_{i \in N} \sum_{j \in N^+(i)} \delta_{o(r)}^i x_{ij} & \leq 0 & \forall r \in R(\bm{\xi_{\theta}})\label{eq: two stage origin}\\
y_{r} - \sum_{j \in N} \sum_{i \in N^-(j)} \delta_{d(r)}^j x_{ij}& \leq 0  & \forall r \in R(\bm{\xi_{\theta}}) \label{eq: two stage destination}\\
\sum_{j \in N^+(i)} x_{ij} - \sum_{j \in N^-(i)} x_{ji} & = d_i  & \forall i \in N \label{eq: two stage flow conservation}\\
t_h + \sum_{i,j \in A_h} \tau_{ij} x_{ij} & \leq t_{h+1} &  h=1, \ldots, n \label{eq: two stage node times}\\
a_h & \leq t_h \leq b_h &  h=1, \ldots, n+1 \label{eq: two stage time windows}\\
\sum_{i \in Q} \sum_{j \neq i, j \in Q} x_{ij} & \leq \vert Q \vert - 1 & \forall Q \subseteq N_h, \vert Q \vert \geq 2, h = 1, \ldots, n \label{eq: two stage subtour elim}\\
y_{r} & \in \{0,1\} &  \forall r \in R(\bm{\xi_{\theta}}) \label{eq: two stage dimension y}\\
x_{ij} & \in \{0,1\} &  \forall i,j \in A\label{eq: two stage dimension x}
\end{align}
\label{prob: two stage Q}
\end{subequations}
}

Problem~\ref{prob: two stage} maximizes the expected profit based on the decision $y_{r_{\theta}}$ at decision time $\theta$, all previsously accepted passengers $S_{\theta}$, and the probability distribution of future passenger requests $\bm{\xi_{\theta}}$.
The objective function \eqref{obj: two-stage Q} aims to maximize the expected operator profit under the assumption that passenger request $r_{\theta}$ is either accepted $(y_{r_{\theta}} = 1)$ or rejected $(y_{r_{\theta}} = 0)$ in the first stage, considering previously accepted requests and future passenger requests at decision time $\theta$.
Constraints~\eqref{eq: two stage accept} ensure all requests accepted before decision time $\theta$ are served. Similarly to Problem~\eqref{prob: offline arcs}, Constraints~\eqref{eq: two stage origin} and \eqref{eq: two stage destination} ensure that a request $r \in R(\bm{\xi_{\theta}})$ can only be served if one stop out of the set of on-boarding stops $s(r)$ and one stop of the set of off-boarding stops $d(r)$ are visited by the vehicle. Constraints~\eqref{eq: two stage flow conservation} ensure flow conservation at all optional and compulsory stops. Constraints~\eqref{eq: two stage node times} define the starting times $t_h$ at all compulsory stops, and Constraints~\eqref{eq: two stage time windows} ensure that the time windows at the compulsory stops are met. Constraints~\eqref{eq: two stage subtour elim} ensure that no subtours within the segments exist and are ensured through lazy constraints. Finally, Constraints~\eqref{eq: two stage dimension y} and~\eqref{eq: two stage dimension x} define the domains of $y_r$ and $x_{ij}$.

\subsection*{Sample-based approximation of the two-stage stochastic program}
\label{chapter: sample-based approximation}
We utilize a sample-based approximation to tackle the computational complexity arising from computing expected second stage costs of the stochastic program. 
Let $\Omega$ define a set of possible request scenarios with the probability of occurrence of $p_{\sigma}$ for all $\sigma \in \Omega$. Here, for each $\sigma \in \Omega$ it holds that $S_{\theta} \subseteq \sigma$ and $r_{\theta} \in \sigma$. We can then approximate Problem~\ref{prob: two stage} through a multi-scenario deterministic model, in which we approximate the random variable $\bm{\xi_{\theta}}$ through a set of scenarios $\Omega$ with corresponding request sets $R^\sigma_{\theta}$ for each scenario $\sigma \in \Omega$.
Problem~\ref{prob: two stage Q scenarios} shows the sample-based approximation of Problem~\ref{prob: two stage Q}.

\vspace{-2ex}
{
\allowdisplaybreaks
\begin{subequations}
\begin{flalign}
Q^\Omega(S_{\theta}, y_{r_{\theta}}, \Omega) = \max \quad \sum_{\sigma\in \Omega} p_\sigma \Bigl(\sum_{r^\sigma \in R^\sigma_{\theta}} u({r^\sigma}) y_{r^\sigma} -\sum_{i, j \in A} c_{ij} x^\sigma_{ij} \Bigr) \label{obj: two-stage Q scenarios}&&
\end{flalign}
\vspace{-4ex}
\begin{align}
y_{r_{\theta}^\sigma} & = y_{r_{\theta}} & \label{eq: linking two stage Q scenarios}\\
y_{r^\sigma} & = 1 & \forall r^\sigma \in S_{\theta}, \forall \sigma\in \Omega \label{eq: two stage accept scenarios}\\
y_{r^\sigma} - \sum_{i \in N} \sum_{j \in N^+(i)} \delta_{o(r^\sigma)}^i x^\sigma_{ij} & \leq 0 & \forall r^\sigma \in R^\sigma_{\theta}, \forall \sigma\in \Omega\label{eq: two stage origin scenarios}\\
y_{r^\sigma} - \sum_{j \in N} \sum_{i \in N^-(j)} \delta_{d(r^\sigma)}^j x^\sigma_{ij}& \leq 0  & \forall r^\sigma \in R^\sigma_{\theta}, \forall \sigma\in \Omega \label{eq: two stage destination scenarios}\\
\sum_{j \in N^+(i)} x^\sigma_{ij} - \sum_{j \in N^-(i)} x^\sigma_{ji} & = d_i  & \forall i \in N, \forall \sigma\in \Omega \label{eq: two stage flow conservation scenarios}\\
t^\sigma_h + \sum_{i,j \in A_h} \tau_{ij} x^\sigma_{ij} & \leq t^\sigma_{h+1} &  h=1, \ldots, n, \forall \sigma\in \Omega \label{eq: two stage node times scenarios}\\
a_h & \leq t^\sigma_h \leq b_h &  h=1, \ldots, n+1 \label{eq: two stage time windows scenarios}\\
\sum_{i \in Q} \sum_{j \neq i, j \in Q} x^\sigma_{ij} & \leq \vert Q \vert - 1 & \forall Q \subseteq N_h, \vert Q \vert \geq 2, h = 1, \ldots, n, \forall \sigma\in \Omega \label{eq: two stage subtour elim}\\
y_{r^\sigma} & \in \{0,1\} &  \forall r^\sigma \in R^\sigma_{\theta}, \forall \sigma\in \Omega \label{eq: two stage dimension y scenarios}\\
x^\sigma_{ij} & \in \{0,1\} &  \forall i,j \in A, \forall \sigma\in \Omega\label{eq: two stage dimension x scenarios}\\
y_{r_{\theta}} & \in \{0, 1\}
\end{align}
\label{prob: two stage Q scenarios}
\end{subequations}
}

The objective function~\eqref{obj: two-stage Q scenarios} calculates the expected operators' profit over all scenarios $\sigma \in \Omega$. Additionally, Constraint~\eqref{eq: linking two stage Q scenarios} ensures, that over all scenarios, request $r_{\theta}$ is either always accepted ($y_{r_{\theta}} = 1$) or never accepted ($y_{r_{\theta}} = 0$). Here, we introduce variables $y_{r_{\theta}^\sigma}$ as this notation enables us to describe our heuristic approach in Section~\ref{chapter: heuristic}.

Figure~\ref{fig: sample based approximation} visualizes the rational of this sample-based approximation approach in which we consider all possible future request scenarios at the same time.

\begin{figure}[t]
\centering
    \includegraphics[width=0.7\columnwidth]{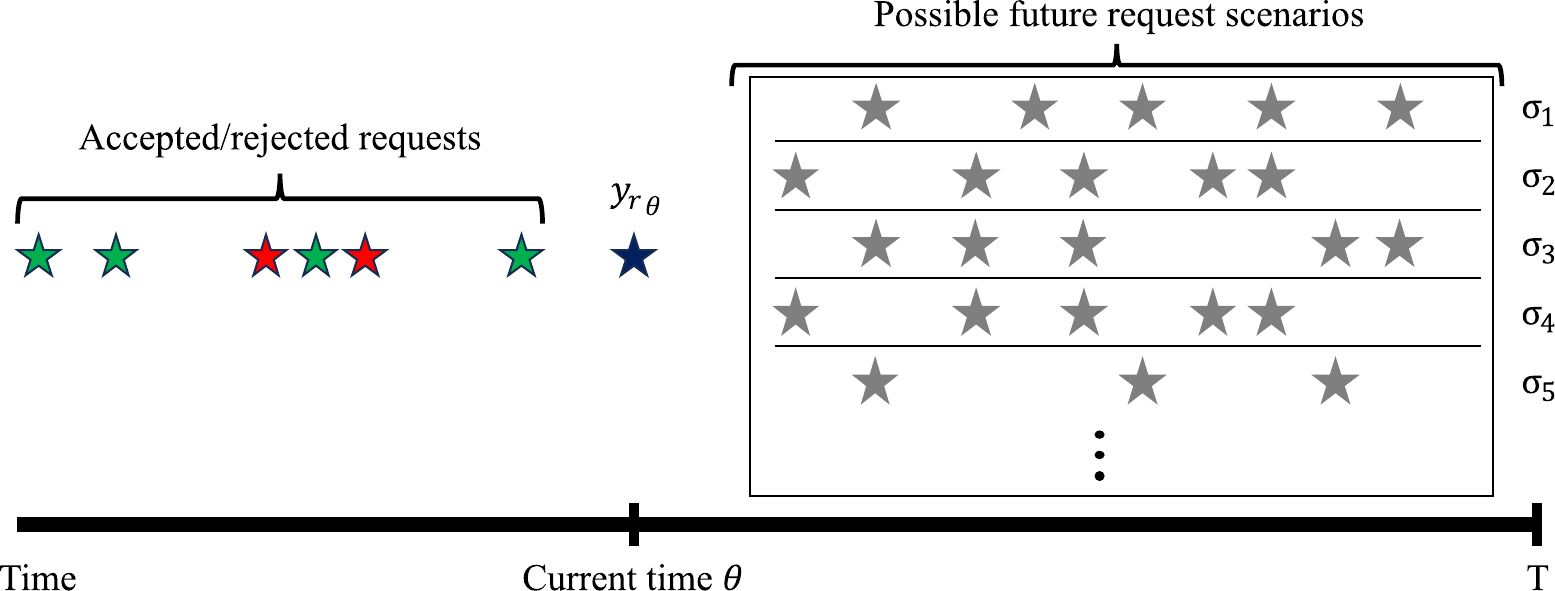}
\caption{Visualization of our sample-based approximation where we consider possible future request scenarios at the same time}
\label{fig: sample based approximation}
\end{figure}

\subsubsection{Decomposition of sample-based approximation}
\label{chapter: sample based decomposition}
Solving the sample-based approximation (Problem~\ref{prob: two stage Q scenarios}) is time-consuming. As we aim to take a decision fast and immediately inform the passenger whether we accept or reject their request, we show how to decompose the sample-based approximation (Problem~\ref{prob: two stage Q scenarios}) into multiple independent full information planning problems (Problems~\ref{prob: offline arcs}) in the following. Note that even though the following proof of the decomposition is rather intuitive, it is necessary to substantiate our scenario-based decomposition and consensus-based heuristic. In the following, we branch on variable $y_{r_{\theta}}$ and thus, implicitly, on the second stage variables $y_{r_{\theta}^\sigma}$, $\forall \sigma \in \Omega$. Accordingly, Constraints~\eqref{eq: linking two stage Q scenarios} automatically hold $\forall \sigma \in \Omega$, and we can decompose $Q^\Omega(S_{\theta}, y_{r_{\theta}}, \Omega)$ by scenario as follows.

{
\allowdisplaybreaks

\begin{align}
Q^\Omega(S_{\theta}, y_{r_{\theta}}, \Omega) = \max \{\sum_{\sigma \in \Omega} p_\sigma Q^\sigma(S_{\theta}, y_{r_{\theta}}, \sigma):  y_{r_{\theta}} \in \{0, 1\}\}
\label{prob: decomp Q}
\end{align}

}

Here, we define $Q^\sigma(S_{\theta}, y_{r_{\theta}}, \sigma)$ as the optimal objective value of Problem~\ref{prob: two stage Q decomp} with $y_{r_{\theta}} = y_{r_{\theta}^\sigma}$ if Problem~\ref{prob: two stage Q decomp} is feasible and $-\infty$ otherwise. Through Theorem~\ref{theorem: infeasible subproblem}, we show that Problem~\ref{prob: two stage Q decomp} being feasible is independent of scenario $\sigma$. As we assume that serving no requests is always feasible, we ensure that $Q^\Omega(S_{\theta}, y_{r_{\theta}}, \Omega) > -\infty$ for all $\theta < T$. Accordingly, we always ensure a feasible solution at each decision time $\theta$. 

\begin{theorem}
\label{theorem: infeasible subproblem}
    Problem~\ref{prob: two stage Q decomp} is infeasible for $\sigma \in \Omega$ with $y_{r_{\theta}} = 1$ if and only if Problem~\ref{prob: two stage Q decomp} is infeasible for $S_{\theta} \cup r_{\theta}$ with $y_{r_{\theta}} = 1$.
\end{theorem}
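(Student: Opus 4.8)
The plan is to prove both implications by contraposition, leaning on the structural fact that in the per-scenario problem (Problem~\ref{prob: two stage Q decomp}) only the requests in $S_\theta$, together with $r_\theta$ when $y_{r_\theta}=1$, are forced to be served, while every other request in the scenario's request set carries a free binary accept/reject variable and can be rejected at no feasibility cost. Feasibility of the per-scenario problem therefore reduces to the existence of a single vehicle route that respects all compulsory-stop time windows and serves exactly the forced requests — a condition that makes no reference to which future requests $\sigma$ happens to contain.

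For the direction ``feasible for $S_\theta \cup r_\theta$ with $y_{r_\theta}=1$ $\Rightarrow$ feasible for scenario $\sigma$ with $y_{r_\theta}=1$'': given a feasible route-and-timing pair $(x^*,t^*)$ of Problem~\ref{prob: two stage Q decomp} posed on the request set $S_\theta \cup r_\theta$, I would build a candidate solution of the scenario-$\sigma$ instance by reusing $x^*$ and $t^*$, setting the accept variables of all requests in $S_\theta \cup r_\theta$ to $1$, and setting the accept variables of all remaining scenario requests to $0$ (well defined since $S_\theta \subseteq \sigma$ and $r_\theta \in \sigma$). One then checks constraint by constraint that this assignment is feasible: the equations fixing the previously accepted requests and the linking of $y_{r_\theta}$ hold by construction; the on-boarding and off-boarding coverage inequalities hold trivially for the rejected requests and are inherited from $(x^*,t^*)$ for the forced ones; and the flow-conservation, node-timing, time-window, and subtour-elimination constraints involve only $x$ and $t$ and transfer verbatim.

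For the converse: given a feasible $(y,x^\sigma,t^\sigma)$ of the scenario-$\sigma$ instance with $y_{r_\theta}=1$, the fixed values $y_r=1$ for $r\in S_\theta$ and $y_{r_\theta}=1$ force (via the coverage constraints) the route $x^\sigma$ to visit an on-boarding stop and an off-boarding stop for every request in $S_\theta \cup r_\theta$; hence $(x^\sigma,t^\sigma)$, together with all accept variables of the reduced model set to $1$, is feasible for Problem~\ref{prob: two stage Q decomp} on the request set $S_\theta \cup r_\theta$, the remaining constraints once more depending only on $x$ and $t$. Taking contrapositives of the two implications yields the claimed equivalence, and since the right-hand condition is independent of $\sigma$, it also shows that feasibility of the per-scenario subproblems is scenario-independent.

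I do not expect a genuine obstacle here; the only point needing care is identifying the reduced instance ``$S_\theta \cup r_\theta$'' with Problem~\ref{prob: two stage Q decomp} restricted to that request set, so that every constraint of the smaller model is matched with its counterpart in the larger one. The conceptual content is just that enlarging the accepted-request set never helps feasibility — extra served requests only add optional-stop detours and hence strictly positive travel time on the affected segments — while every not-yet-decided request may be rejected, so the ``hardest feasible'' configuration is exactly serving $S_\theta \cup r_\theta$. This is what justifies reducing the action space to ``reject'' as soon as the reduced instance is infeasible.
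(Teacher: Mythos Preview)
Your proof is correct and follows essentially the same idea as the paper's: both directions amount to observing that the feasible region of the reduced instance on $S_\theta\cup r_\theta$ is exactly the slice $\Lambda(\sigma)\cap\{y_r=0:\,r\in\sigma\setminus(S_\theta\cup r_\theta)\}$ of the scenario-$\sigma$ feasible region, so one is empty if and only if the other is. The paper states this geometrically in two lines, whereas you spell out the explicit witness constructions and the constraint-by-constraint verification; the content is the same.
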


\begin{proof}
    Let $\sigma = S_{\theta} \cup r_{\theta} \cup \mathcal{F}$ with $\mathcal{F}$ being a sample of possible future requests. We define $\Lambda(\sigma)$ as the feasible region of Problem~\ref{prob: two stage Q decomp} for $\sigma \in \Omega$ and $\Lambda(S_{\theta} \cup r_{\theta})$ as the feasible region for $S_{\theta} \cup r_{\theta}$ with $y_{r_{\theta}} = 1$. For the Theorem to hold, we have to show that $\Lambda(\sigma) = \emptyset$ if and only if $\Lambda(S_{\theta} \cup r_{\theta}) = \emptyset$. We assume that serving no customers is always feasible. Accordingly, $\Lambda(S_{\theta} \cup r_{\theta})$ is a facet of $\Lambda(\sigma)$ with $\Lambda(\sigma) \cap \{y_r = 0: r \in \mathcal{F}\} = \Lambda(S_{\theta} \cup r_{\theta})$. It follows directly that $\Lambda(\sigma) = \emptyset \Leftrightarrow \Lambda(S_{\theta} \cup r_{\theta}) = \emptyset$. \qed
\end{proof}

\smallskip Additionally, we define Problem~\ref{prob: two stage Q decomp} as follows.

{
\allowdisplaybreaks
\begin{subequations}
\begin{flalign}
Q^\sigma(S_{\theta}, y_{r_{\theta}}, \sigma) = \max \quad \sum_{r^\sigma \in R^\sigma_{\theta}} u({r^\sigma}) y_{r^\sigma} -\sum_{i, j \in A} c_{ij} x^\sigma_{ij} \label{obj: two-stage Q decomp}&&
\end{flalign}
\begin{align}
y_{r_{\theta}^\sigma} & = y_{r_{\theta}} & \label{eq: linking two stage Q decomp}\\
y_{r^\sigma} & = 1 & \forall r^\sigma \in S_{\theta} \label{eq: two stage accept decomp}\\
y_{r^\sigma} - \sum_{i \in N} \sum_{j \in N^+(i)} \delta_{o(r^\sigma)}^i x^\sigma_{ij} & \leq 0 & \forall r^\sigma \in R^\sigma_{\theta}\label{eq: two stage origin decomp}\\
y_{r^\sigma} - \sum_{j \in N} \sum_{i \in N^-(j)} \delta_{d(r^\sigma)}^j x^\sigma_{ij}& \leq 0  & \forall r^\sigma \in R^\sigma_{\theta} \label{eq: two stage destination decomp}\\
\sum_{j \in N^+(i)} x^\sigma_{ij} - \sum_{j \in N^-(i)} x^\sigma_{ji} & = d_i  & \forall i \in N \label{eq: two stage flow conservation decomp}\\
t^\sigma_h + \sum_{i,j \in A_h} \tau_{ij} x^\sigma_{ij} & \leq t^\sigma_{h+1} &  h=1, \ldots, n \label{eq: two stage node times decomp}\\
a_h & \leq t^\sigma_h \leq b_h &  h=1, \ldots, n+1 \label{eq: two stage time windows decomp}\\
y_{r^\sigma} & \in \{0,1\} &  \forall r^\sigma \in R^\sigma_{\theta} \label{eq: two stage dimension y decomp}\\
x^\sigma_{ij} & \in \{0,1\} &  \forall i,j \in A\label{eq: two stage dimension x decomp}
\end{align}
\label{prob: two stage Q decomp}
\end{subequations}
}

Note that in Problem~\ref{prob: two stage Q decomp} $y_{r_{\theta}}$ and $\sigma$ are fixed. Consequently, the objective function~\eqref{obj: two-stage Q decomp} calculates the operators' profit for the request set $R^\sigma_{\theta}$ of scenario $\sigma$ under the assumption that decision $y_{r_{\theta}}$ has been taken \eqref{eq: linking two stage Q decomp} and all requests accepted before decision time $\theta$ are served \eqref{eq: two stage accept decomp}. 
Furthermore, Constraints~\eqref{eq: two stage origin decomp} and \eqref{eq: two stage destination decomp} ensure that a request $r^\sigma \in R^\sigma_{\theta}$ can only be served if one stop out of the set of on-boarding stops $s(r)$
and one stop of the set of off-boarding stops $d(r)$ are visited by the vehicle. Constraints~\eqref{eq: two stage flow conservation decomp} ensure
flow conservation at all optional and compulsory stops. 

\begin{figure}[!t]
\centering
    \includegraphics[width=0.7\columnwidth]{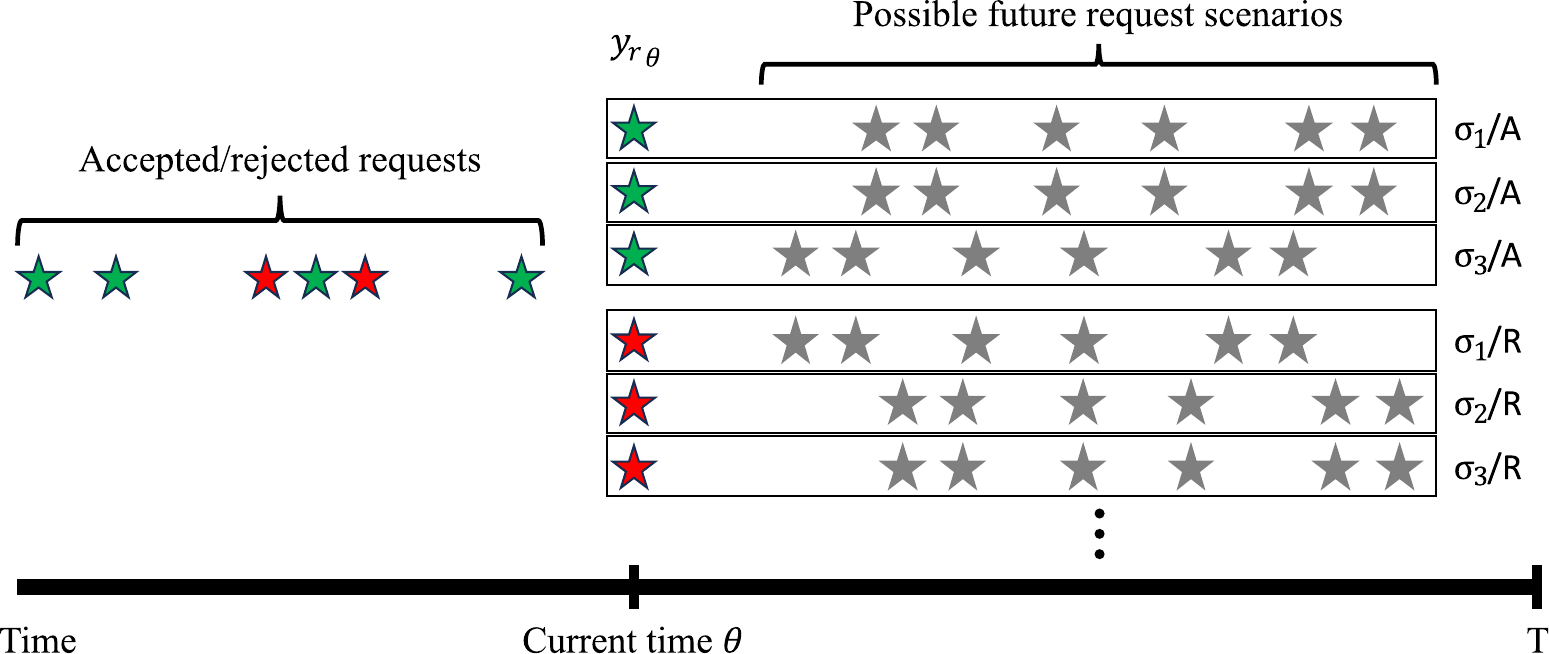}
\caption{Visualization of our sample-based approximation decomposition into different subproblems where we independently solve different scenarios $\sigma_i$ with acceptance (A) or rejection (R) of request $r_{\theta}$}
\label{fig: decomposition exact}
\end{figure}

Constraints~\eqref{eq: two stage node times decomp} define the starting times at all compulsory stops, and Constraints~\eqref{eq: two stage time windows decomp} ensure that the time windows at the compulsory
stops are met. Finally, Constraints~\eqref{eq: two stage dimension y decomp} and \eqref{eq: two stage dimension x decomp} define the dimensions of $y_{r^\sigma}$ and $x_{ij}^\sigma$.
Accordingly, Problem~\ref{prob: two stage Q decomp} is now equivalent to our full information Problem~\ref{prob: offline arcs} with all previously accepted requests' variables $r^\sigma \in S_{\theta}$ fixed to one in Constraints~\eqref{eq: two stage accept decomp} and our current request variable $y_{r_{\theta}}$ either fixed to zero or one. 

Figure~\ref{fig: decomposition exact} visualizes how our decomposition allows us to solve all scenarios $\sigma \in \Omega$ independently in a full information problem setting, once assuming we reject the current passenger request ($y^\sigma_{r_{\theta}} = 0$) and once assuming we accept it ($y^\sigma_{r_{\theta}} = 1$). Afterward, we either accept or reject the current passenger request depending on whether $\sum_{\sigma \in \Omega} p_\sigma Q^\sigma(S_{\theta}, 1, \sigma) > \sum_{\sigma \in \Omega} p_{\sigma} Q^\sigma(S_{\theta}, 0, \sigma)$ (accept) or not (reject).

Note that solving all full information subproblems independently is easily parallelizable. Indeed, we will show in Section~\ref{sec: Results} that this decomposition allows us to make a decision much faster than solving Problem~\ref{prob: two stage Q scenarios} directly. 

\subsubsection{Consensus-based heuristic approach}
\label{chapter: heuristic}
Our decomposition of the operational route planning problem under uncertainty allows us to solve multiple full information problems, once for accepting the current passenger request and once for rejecting it. Alternatively, Figure~\ref{fig: decomposition heuristic} visualizes how we can also solve the individual full information problems without fixing the current passenger requests variable $y_{r_{\theta}^\sigma}$, i.e., neglecting Constraint~\eqref{eq: linking two stage Q decomp}, and take a weighted majority vote to decide on whether to accept or reject the current passenger request.

Such a weighted majority vote counteracts instances in which there exist scenarios that achieve disproportionately large profits. Our conjecture is that in our original decomposition, these scenarios may influence the acceptance decision substantially, even if the probability of occurrence is low. Deciding whether to accept or reject passenger request $r_{\theta}$ based on a weighted majority vote decouples the decision from the absolute objective value.

{
\allowdisplaybreaks
\vspace{-4ex}
\begin{align}
y_{r_{\theta}} =
\begin{cases}
    1, & \text{if } \sum_{\sigma \in \Omega} p_\sigma \mathbbm{1}_{\{y_{r_{\theta}^\sigma} = 1\}} \geq \frac{\vert \Omega \vert}{2} , \\
    0, & \text{otherwise }
\end{cases}
\end{align}
\label{def: heuristic decision}

}

Here, for $\sigma \in \Omega$ we define the indicator function $\mathbbm{1}_{\{y_{r_{\theta}^\sigma} = 1\}}$ as follows

{
\allowdisplaybreaks
\vspace{-4ex}
\begin{align}
\mathbbm{1}_{\{y_{r_{\theta}^\sigma} = 1\}} =
\begin{cases}
    1, & \text{if } y_{r_{\theta}^\sigma} = 1 , \\
    0, & \text{if } y_{r_{\theta}^\sigma} = 0
\end{cases}
\end{align}
\label{def: indicator function}
}

Similar to our previous decomposition approach, solving the independent subproblems of our heuristic approach is easily parallelizable. 

\begin{figure}[!t]
\centering
    \includegraphics[width=0.8\columnwidth]{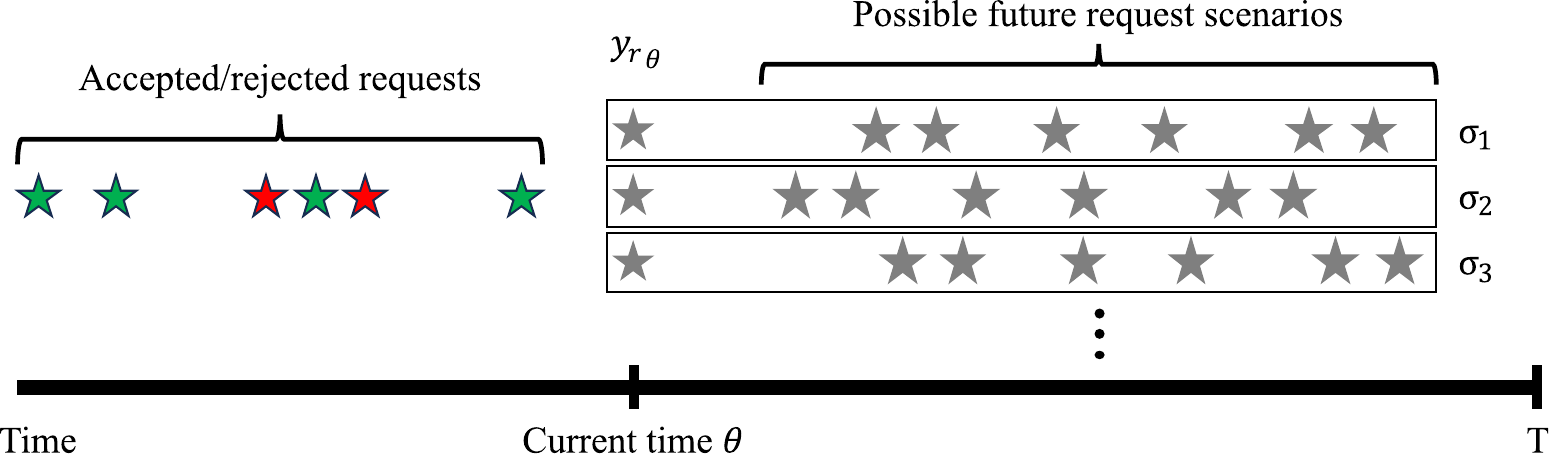}
\caption{Visualization of our heuristic approach where we independently solve different scenarios $\sigma_i$ without fixing the decision for $r_{\theta}$}
\label{fig: decomposition heuristic}
\end{figure}

\subsection{Myopic approach}
\label{chapter: greedy}
So far, we always considered future passenger requests and approximated them through a stochastic distribution. 
Instead, we can also make a decision whether to accept or reject passenger request $r_{\theta}$ based on a myopic approach. Figure~\ref{fig: greedy approach} visualizes how in such an approach, we neglect all information about future passenger requests and accept passenger requests $r_{\theta}$ only if this immediately increases our profit. 
To do so, we solve Problem~\ref{prob: two stage Q decomp} only for one scenario $\sigma = S_{\theta} \cup r_{\theta}$. Accordingly, we decide whether to accept or reject passenger request $r_{\theta}$ based on Equation~\eqref{def: greedy heuristic decision} in accordance to Constraint~\eqref{eq: linking two stage Q decomp}.

{
\allowdisplaybreaks
\vspace{-4ex}
\begin{align}
y_{r_{\theta}} =
\begin{cases}
    1, & \text{if } y_{r_{\theta}^\sigma} = 1 , \\
    0, & \text{otherwise }
\end{cases}
\label{def: greedy heuristic decision}
\end{align}

}

\begin{figure}[h]
\centering
    \includegraphics[width=0.7\columnwidth]{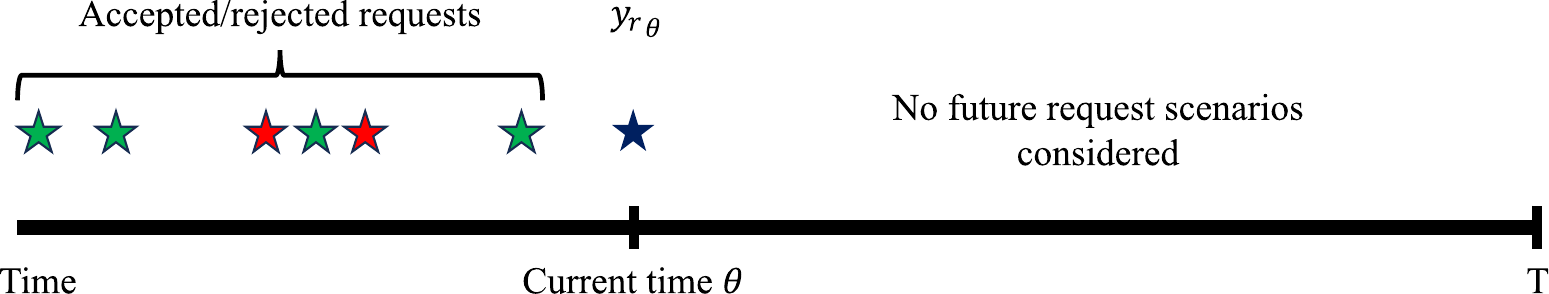}
\caption{Visualization of our myopic approach where we do not consider future request scenarios}
\label{fig: greedy approach}
\end{figure}

\section{Computational analysis}
\label{sec: Results}

Our analysis is fivefold. First, we analyze the computational efficiency of our \gls{2S-SP}, our \gls{HA}, and the Myopic approach. To do so, we first compare the computation times, profits, and percentages of passengers served in solutions produced by those approaches on two different \gls{DAS} lines with different degrees of route flexibility. 
Second, we perform a sensitivity analysis on the number of scenarios in our \gls{2S-SP} and \gls{HA}. Here, we analyze the change in computation times, profits, and served passengers attained through our \gls{2S-SP} and \gls{HA} approaches under different numbers of future demand scenarios. 
Third, we evaluate the solution quality of our approaches compared to an optimal solution under full information. To do so, we compare our \gls{2S-SP}, \gls{HA}, and Myopic approaches to the profits obtained and passengers served under full information, solved with our full information model (Problem~\ref{prob: offline arcs}). 
Fourth, we analyze the benefits and drawbacks of converting a fixed-line bus system to a \gls{DAS}. To do so, we compare both systems' profits, served passengers, and travel distances to show how an increase in travel distances translates to an increase in profits and served passengers. 
Fifth, we aim to understand how the passenger's willingness to walk to nearby stops and the flexibility of the bus route influences the profit and number of served passengers. Here, we show the differences in profits and number of served passengers under varying passenger walking willingness and route flexibility. 

\subsection*{Instances}
We use data provided by Stadtwerke München GmbH (SWM), the public transportation provider of Munich. This SWM data consists of the average number of passengers boarding and aligning at the different stops of two different fixed bus routes over the entire day. The data was collected between April and June 2022. Based on this data, we generate between five and 40 demand scenarios in three steps. First, we randomly determine an OD-matrix between the stops of the original fixed route through iterative proportional fitting \citep{ruschendorf1995convergence} based on the SWM data. This OD-matrix indicates for each pair of compulsory stops the number of passengers seeking transportation between those stops. Here, the origins and destinations of all passengers lie at the stops of the original fixed bus route. Second, based on the determined OD-matrix, we randomly map passenger origins and destinations onto buildings within 300 meters of their respective original origin and destination. These buildings correspond to the pick-up and drop-off locations of a passenger request and, in combination with the walking distance, later define the pick-up and drop-off sets of the passenger requests. Note that if there are multiple requests between the same origin and destination in the OD-matrix, we individually map each request to their own pick-up and drop-off locations. Third, to be able to model a point process in which the requests arrive in the system over time, we use random sampling to obtain a request time between zero and 10,800 seconds (3 hours) before the route's departure time for each passenger request.

Besides the generation of passenger demand, we also utilize the SWM data to determine the compulsory and optional stops of our route. To do so, we utilize a \textit{compulsory stop factor} (CSF), which determines the degree of flexibility of our \gls{DAS} line. This CSF lies between zero and one. The first and last stops of the original fixed bus route are always marked as compulsory stops as they often lie at critical transfer hubs or are essential for operations, e.g., driver breaks. Additionally, we order the stops of the original fixed route based on their passenger demand utilization and mark a percentage of them as compulsory. This percentage corresponds to the CSF. 
Furthermore, we allow each intersection within 600 meters of the original fixed bus route to be an optional stop. Here, we merge optional stops within 200 meters of each other. 

\subsection*{Computational setting}
All our experiments have been conducted on a standard desktop computer equipped with two \texttt{Intel(R) Core(TM) i9-9900, 3.1 GHz CPU} and a total of \texttt{4 GB} of \texttt{RAM}, running \texttt{Ubuntu 20.04}. We have implemented all algorithms in \texttt{Python (3.8.11)} using \texttt{Gurobi 10.0.0}. Our source code can be found on \url{https://github.com/tumBAIS/Route_Planning_DAS}.

\subsection{Computational effectiveness of the proposed methods}
The profit of our operational route planning problem under uncertainty is driven by two quantities. The revenue of accepted passenger requests and travel costs. To see what drives solution quality and understand the computational effectiveness of our proposed methods, we compare our proposed methods on two different routes based on the achieved profit, served passengers (SP) out of all issued requests, computation times in serial computation (ser.), and a theoretical lower bound of parallel computation times (par.). Here, the theoretical lower bounds of parallel computation times for \gls{2S-SP} and \gls{HA} are given by the maximum runtime of the independent subproblems for each request decision. We include this theoretical lower bound as parallel computation is a typical approach for dynamic decision-making in stochastic problem settings. We compare the different parameters by aggregating the results of solving five different instances for each instance configuration. To do so, we compare our proposed methods with five scenarios in our \gls{2S-SP} and \gls{HA} approaches, a uniform passenger utility of 750, and a CSF $\in [0.2, 0.4, 0.6, 0.8, 1]$. 
We analyze our results for different CSF and different numbers of scenarios in our \gls{2S-SP} and \gls{HA} approaches. Note that in our \gls{2S-SP}, we solve our decomposition Problem~\ref{prob: decomp Q}. We were not able to solve Problem~\ref{prob: two stage Q scenarios} within four hours for all decision times for our configurations. 

\subsubsection{Computational effectiveness for varying CSF}
In this section, we focus on how the degree of flexibility of a \gls{DAS} line, i.e., the CSF, changes the computational effectiveness of our three approaches.
Table~\ref{tab: comparison computational effectiveness 20 scenarios route 55} shows that for Route 55, our serial-computed \gls{HA} approach is between 22 - 54\% faster than our \gls{2S-SP} approach, and our Myopic approach is between 88 - 98\% faster than our \gls{HA}. Comparing the theoretical lower bound of parallel computation, we see that our \gls{2S-SP} and \gls{HA} approaches perform similarly. Note that our \gls{2S-SP} still needs to solve twice the number of subproblems as our \gls{HA}. We also see that with increasing flexibility of a \gls{DAS} line, i.e., decreasing CSF, the parallel computation times of our \gls{2S-SP} and \gls{HA} approaches increase significantly. 

\begin{table}[!t]
  \centering
  \caption{Comparison between \gls{2S-SP}, \gls{HA}, and Myopic with different compulsory stop factors for Route 55 with 5 future demand scenarios}
  \label{tab: comparison computational effectiveness 20 scenarios route 55}
  \small
\begin{tabular}{cccccccccccc}
\multicolumn{1}{l}{} &  \multicolumn{4}{c}{\gls{2S-SP}} &  \multicolumn{4}{c}{\gls{HA}} &  \multicolumn{3}{c}{Myopic} \\
 \cmidrule(lr){2-5} \cmidrule(lr){6-9} \cmidrule(lr){10-12}
  CSF & Profit & SP [\%] & ser. [s] & par. [s] & Profit & SP & ser. & par. & Profit & SP & ser.\\
\midrule
0.20 & 25864.38 & 83.02 & 4.97 & 0.98 & 25792.82 & 82.87 & 2.28 & 0.89 & 25316.64 & 82.10 & 0.25 \\
0.40 & 23456.57 & 77.31 & 2.07 & 0.36 & 23290.62 & 76.85 & 1.41 & 0.42 & 22166.70 & 75.00 & 0.14 \\
0.60 & 22960.73 & 75.46 & 1.13 & 0.20 & 22956.19 & 75.46 & 0.88 & 0.24 & 22429.49 & 75.22 & 0.09 \\
0.80 & 21472.71 & 72.53 & 0.25 & 0.03 & 21366.03 & 72.07 & 0.17 & 0.04 & 18658.58 & 63.85 & 0.02 \\
1.00 & 20656.19 & 69.91 & 0.10 & 0.01 & 20561.83 & 69.60 & 0.07 & 0.01 & 20731.19 & 70.06 & 0.01 \\
\bottomrule
\end{tabular}
\end{table}

\begin{table}[!t]
  \centering
  \caption{Comparison between \gls{2S-SP}, \gls{HA}, and Myopic with different compulsory stop factors Route 155 with 5 future demand scenarios}
  \label{tab: comparison computational effectiveness 20 scenarios route 155}
  \small
\begin{tabular}{cccccccccccc}
\multicolumn{1}{l}{} &  \multicolumn{4}{c}{\gls{2S-SP}} &  \multicolumn{4}{c}{\gls{HA}} &  \multicolumn{3}{c}{Myopic} \\
 \cmidrule(lr){2-5} \cmidrule(lr){6-9} \cmidrule(lr){10-12}
 CSF & Profit & SP [\%] & ser. [s] & par. [s] & Profit & SP & ser. & par. & Profit & SP & ser.\\
\midrule
0.20 & 19181.68 & 89.92 & 3.99 & 0.61 & 19011.13 & 89.42 & 1.72 & 0.47 & 18214.41 & 87.43 & 0.16 \\
0.40 & 18575.22 & 87.41 & 1.44 & 0.23 & 18494.20 & 86.90 & 1.19 & 0.31 & 17806.07 & 84.13 & 0.11 \\
0.60 & 16878.16 & 81.11 & 0.56 & 0.06 & 16792.81 & 80.60 & 0.32 & 0.09 & 16807.90 & 80.86 & 0.05 \\
0.80 & 16498.82 & 79.09 & 0.12 & 0.01 & 16498.82 & 79.09 & 0.06 & 0.01 & 16360.81 & 78.84 & 0.01 \\
1.00 & 15282.68 & 74.81 & 0.05 & 0.01 & 15209.15 & 74.56 & 0.03 & 0.01 & 15209.15 & 74.56 & 0.00 \\
\bottomrule
\end{tabular}
\end{table}

Looking at our approaches' profits, we see that our \gls{2S-SP} achieves the best profit for all CSFs except 1.0, followed by our \gls{HA} approach. As expected, our Myopic approach performs worse than our \gls{2S-SP} and \gls{HA} approaches but surprisingly outperforms our \gls{2S-SP} and \gls{HA} at a CSF of 1.0. The reason for that is a higher problem complexity for instances with high flexibility where our \gls{2S-SP} and \gls{HA} approaches benefit from utilizing historical data. In low flexibility instances, the \gls{DAS} route is mostly fixed due to the time windows at the compulsory stops. We also see that our \gls{2S-SP} approach serves more passengers than our \gls{HA} and Myopic approaches.

Table~\ref{tab: comparison computational effectiveness 20 scenarios route 155} shows that for Route 155, in serial computation, our \gls{HA} approach is between 17 - 57\% faster than our \gls{2S-SP} approach, and our Myopic approach is between 83 - 91\% faster than our \gls{HA}. Again, the theoretical lower bounds of parallel computation of our \gls{2S-SP} and \gls{HA} approaches perform similarly and again increase with the growing flexibility of the \gls{DAS} line. Looking at our approaches' profits, we see that similar to Route 55, \gls{2S-SP} achieves the best profit for all CSFs. This can analogously be explained by the higher number of passengers served. 

\begin{observation}
    Regarding solution quality and the number of served passengers, our \gls{2S-SP} approach yields the best results, followed by our \gls{HA} and finally our Myopic approach. Additionally, our Myopic approach is the fastest in serial computation, followed by our \gls{HA} approach, which is between 17 - 57\% faster than our \gls{2S-SP} approach. Both our \gls{2S-SP} and \gls{HA} approaches have similar theoretical lower bounds of parallel computation.
\end{observation}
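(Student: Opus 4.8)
The plan is to substantiate this observation \emph{empirically}, since it summarizes the outcome of a controlled computational comparison rather than an analytical identity; the ``proof'' is therefore an experimental protocol together with the structural reasoning that predicts the observed orderings. First I would fix a common setup across all three methods: the same two \gls{DAS} lines (Route~55 and Route~155), the same five CSF levels $\{0.2,0.4,0.6,0.8,1.0\}$, a uniform utility of $750$, and five future demand scenarios in the \gls{2S-SP} and \gls{HA} approaches, aggregating each configuration over five independently generated instances so that every reported profit, served-passenger fraction, serial time, and parallel lower bound is a mean over a fixed sample. The comparisons are only meaningful if every instance parameter is held constant across methods, so I would generate the demand, the pick-up/drop-off sets, and the request times once per instance and replay the identical arrival sequence $r_\theta$ through each of the three policies.

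Second, to establish the solution-quality ordering in which the \gls{2S-SP} is best, the \gls{HA} second, and the Myopic approach last, I would read the Profit and SP columns of Tables~\ref{tab: comparison computational effectiveness 20 scenarios route 55} and~\ref{tab: comparison computational effectiveness 20 scenarios route 155} row by row. The ordering is expected on structural grounds: the \gls{2S-SP} solves the full decomposition (Problem~\ref{prob: decomp Q}) over both the accept and the reject branch and selects the branch with larger expected profit; the \gls{HA} replaces this objective comparison by a weighted majority vote over the \emph{same} scenario subproblems; and the Myopic rule discards all future information, keeping only the single scenario $\sigma = S_\theta \cup r_\theta$. Each method is thus a strictly coarser approximation of the look-ahead value than the previous one, so one predicts monotone degradation in both profit and passengers served, which the tables confirm in all but the highest-flexibility rows.

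Third, for the timing claims I would compare the serial-time (ser.) and parallel lower-bound (par.) columns together with the subproblem counts per decision: the Myopic approach solves one full information problem (Problem~\ref{prob: two stage Q decomp}), the \gls{HA} solves $|\Omega|$ of them, and the \gls{2S-SP} solves $2|\Omega|$, which yields the serial ordering Myopic fastest, then \gls{HA}, then \gls{2S-SP}. The $17$--$57\%$ serial speedup of \gls{HA} over \gls{2S-SP} is then the per-row quantity $1 - \rho$, where $\rho$ is the ratio of the \gls{HA} serial time to the \gls{2S-SP} serial time; its minimum and maximum over the rows of both tables give the stated range. The near-equality of the parallel lower bounds follows because that bound is the maximum single-subproblem runtime rather than the sum: although the \gls{2S-SP} dispatches twice as many subproblems, the hardest individual full information solve is of comparable difficulty in both methods, so the max is essentially unchanged.

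The main obstacle I anticipate is the high-flexibility regime. At $\mathrm{CSF}=1.0$ on Route~55 the Myopic approach attains a higher profit than both anticipatory methods, so the ordering in the first clause is not uniform over every row. I would address this by framing the observation as an aggregate trend and by explaining the exception through the interaction between flexibility and problem difficulty: when the route is essentially pinned down by the compulsory-stop time windows, anticipation yields little benefit and scenario sampling can inject noise, whereas the Myopic rule incurs no such variance. The only other delicate point is that \emph{similar} parallel lower bounds is a qualitative rather than a strict statement; I would support it by reporting the two par.\ columns side by side and noting that their differences fall within the instance-to-instance variation rather than asserting an exact match.
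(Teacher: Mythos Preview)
Your approach is correct and mirrors the paper's: this is an empirical observation, and the paper substantiates it exactly as you propose, by reading the Profit, SP, ser., and par.\ columns of Tables~\ref{tab: comparison computational effectiveness 20 scenarios route 55} and~\ref{tab: comparison computational effectiveness 20 scenarios route 155} row by row and appealing to the structural subproblem counts ($1$, $|\Omega|$, $2|\Omega|$) for the timing claims.

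One terminological slip to fix: you write ``the main obstacle I anticipate is the high-flexibility regime'' and then cite $\mathrm{CSF}=1.0$, but in the paper's convention a higher CSF means \emph{lower} flexibility (more stops are compulsory), so $\mathrm{CSF}=1.0$ is the \emph{low}-flexibility extreme. Your explanation of the anomaly---that the route is ``essentially pinned down by the compulsory-stop time windows'' so anticipation buys little---is exactly the paper's reasoning and is correct for low flexibility; only the label is inverted. Likewise, ``all but the highest-flexibility rows'' should read ``all but the lowest-flexibility rows.''
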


\begin{observation}
    For each algorithmic approach and instance the serial and parallel computation times of our three approaches increase significantly with growing route flexibility, i.e., a decrease of CSF. 
\end{observation}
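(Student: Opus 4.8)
The plan is to substantiate this observation empirically through the computational results already reported in Tables~\ref{tab: comparison computational effectiveness 20 scenarios route 55} and~\ref{tab: comparison computational effectiveness 20 scenarios route 155}, and to back those measurements with a structural argument explaining why a smaller CSF inflates the routing subproblems. First I would read off, for each of the three approaches (\gls{2S-SP}, \gls{HA}, and Myopic) and for each route, the serial (ser.) and parallel (par.) columns and verify that both quantities rise as the CSF is reduced from $1.0$ to $0.20$. This supplies the direct evidence: on Route~55 the serial \gls{2S-SP} time grows from $0.10$ to $4.97$ seconds and the parallel lower bound from $0.01$ to $0.98$ seconds, with the same pattern appearing in every column of both tables.

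Second I would explain the mechanism driving this growth through the instance structure described in the instance-generation paragraph. As the CSF decreases, fewer stops of the original fixed route are designated compulsory, so more stops become optional and enter the segment sets $F_h$. Consequently each segment graph $G_h = (N_h, A_h)$ enlarges, which increases the number of binary arc variables $x_{ij}$ in the full information subproblem (Problem~\ref{prob: offline arcs}, equivalently Problem~\ref{prob: two stage Q decomp}) and, crucially, the number of subtour elimination constraints~\eqref{eq: subtour elim}, whose count scales combinatorially in $\vert N_h \vert$. At CSF~$1.0$, by contrast, the route is essentially pinned down by the compulsory stops and their time windows, so the routing component is nearly trivial and only the accept/reject decisions remain, which accounts for the near-zero solve times in that row.

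Third I would argue that this structural growth propagates uniformly to all three approaches, since each repeatedly solves the same family of full information subproblems (once per scenario, with or without fixing $y_{r_{\theta}}$ via Constraint~\eqref{eq: linking two stage Q decomp}). Because the parallel lower bound is defined as the maximum subproblem runtime at each decision time, it inherits the same dependence on subproblem size, explaining why the serial and parallel columns climb together. I would further note that additional optional stops render more requests servable—a request becomes relevant precisely when its pick-up or drop-off set $s(r)$ or $d(r)$ contains a reachable optional stop—so the effective number of active $y_r$ variables and the branching they induce also grow with flexibility, compounding the effect.

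The main obstacle is that this is an empirical regularity rather than a deductive statement: mixed-integer solve times are not provably monotone in instance size, and any worst-case bound would be far too loose to match the observed behavior. The honest claim is therefore that increasing route flexibility systematically enlarges the routing subproblems in a way that consistently and significantly raises computation time across every configuration we tested, not that it must do so for every conceivable instance. Accordingly the justification combines the tabulated measurements with the qualitative size argument above rather than asserting a rigorous monotonicity guarantee.
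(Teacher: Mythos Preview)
Your proposal is correct and aligns with the paper's treatment: this is an \emph{Observation}, not a theorem, and the paper justifies it purely by pointing to the tabulated runtimes in Tables~\ref{tab: comparison computational effectiveness 20 scenarios route 55} and~\ref{tab: comparison computational effectiveness 20 scenarios route 155} without any structural argument. Your steps two and three (the explanation via segment size, subtour-elimination blow-up, and propagation through the shared subproblem structure) go beyond what the paper offers, which is useful added insight but not required to match the paper's own evidence for the observation.
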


\subsubsection{Distribution of computation times for varying CSF}

Figure~\ref{fig: computation time CSF} shows the distribution of serial and parallel computation times for our approaches for instances of Route 55. Note that the parallel computation times of our Myopic approach are equivalent to the serial computation times. 

Building on Table~\ref{tab: comparison computational effectiveness 20 scenarios route 55}, we see that our Myopic approach has stable computation times of under two seconds for all instances and under one second for instances with a CSF greater than 0.2. Our \gls{2S-SP} and \gls{HA} approaches are not as stable in serial and parallel computation times and range between $0 - 28$~seconds for serial computation and $0 - 5$~seconds for parallel computation. The reason for that is, first, the different numbers of scenarios in our \gls{2S-SP} and \gls{HA} approaches and, second, significant variance in computation times for the different subproblems of the requests. 

\begin{figure}[!b]
\centering
\vspace{-2ex}
	\begin{subfigure}[t]{.49\textwidth}
		\centering		
		\includegraphics[width=\textwidth]{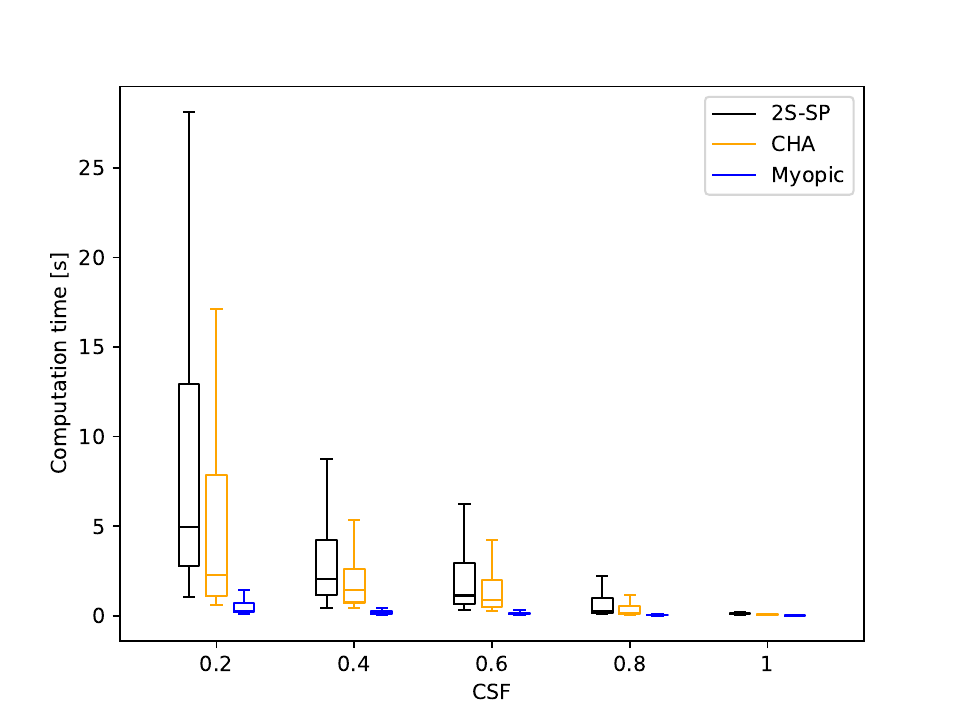}
        \captionsetup{format=hang}
		\caption{Serial computation times}
	\end{subfigure}
	\begin{subfigure}[t]{.49\textwidth}
		\centering		
		\includegraphics[width=\textwidth]{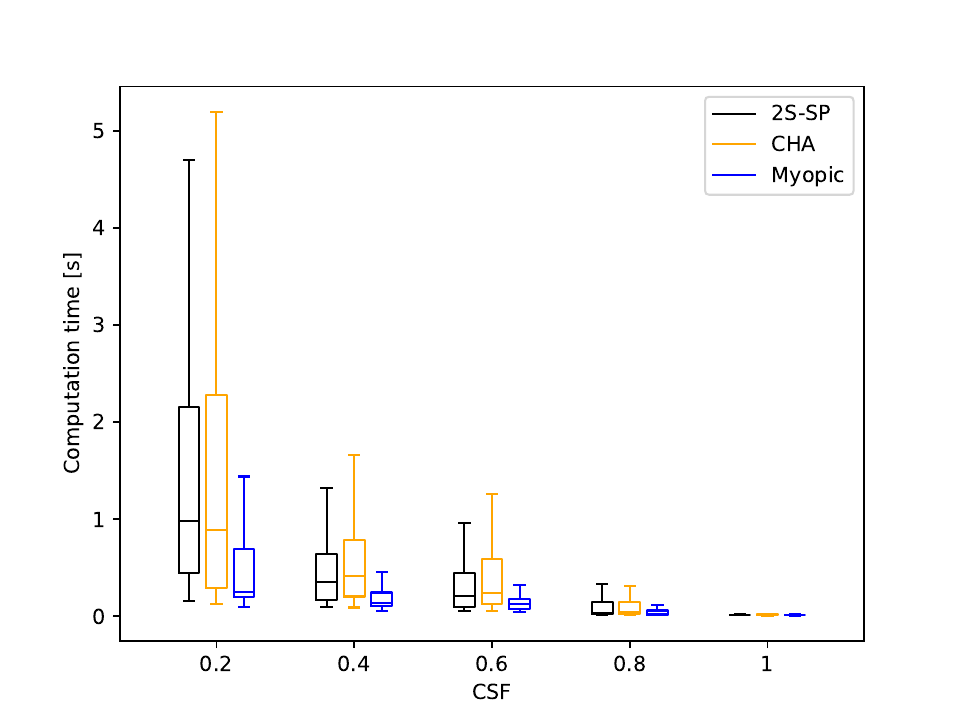}
        \captionsetup{format=hang}
		\caption{Parallel computation times}
	\end{subfigure}
 	\caption{Difference in serial and parallel computation times based on varying CSF for Route 55}
	\label{fig: computation time CSF}
\end{figure}

\begin{figure}[!b]
\centering
\vspace{-2ex}
	\begin{subfigure}[t]{.49\textwidth}
		\centering		
		\includegraphics[width=\textwidth]{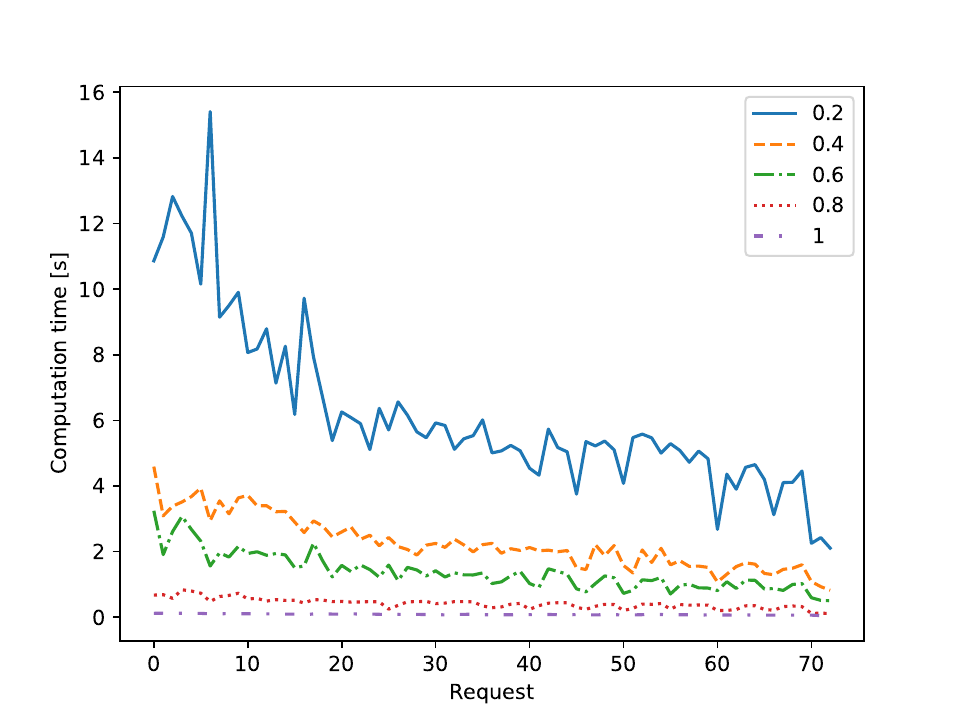}
        \captionsetup{format=hang}
		\caption{\centering Serial computation times for different \newline CSF (\gls{2S-SP})}
	\end{subfigure}
	\begin{subfigure}[t]{.49\textwidth}
		\centering		
		\includegraphics[width=\textwidth]{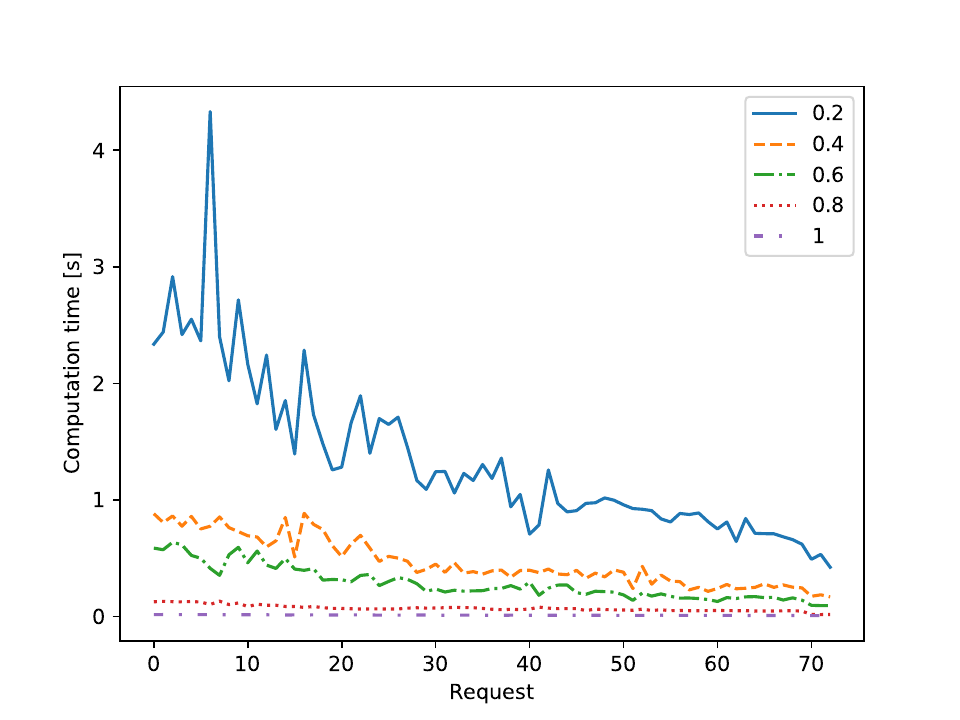}
        \captionsetup{format=hang}
		\caption{\centering Parallel computation times for different \newline CSF (\gls{2S-SP})}
	\end{subfigure}
    	\caption{Development of average serial and parallel computation times based on different CSF for Route 55}
    \label{fig: development computation time CSF}
\end{figure}

\begin{figure}[!t]\ContinuedFloat
 	\begin{subfigure}[t]{.49\textwidth}
		\centering		
		\includegraphics[width=\textwidth]{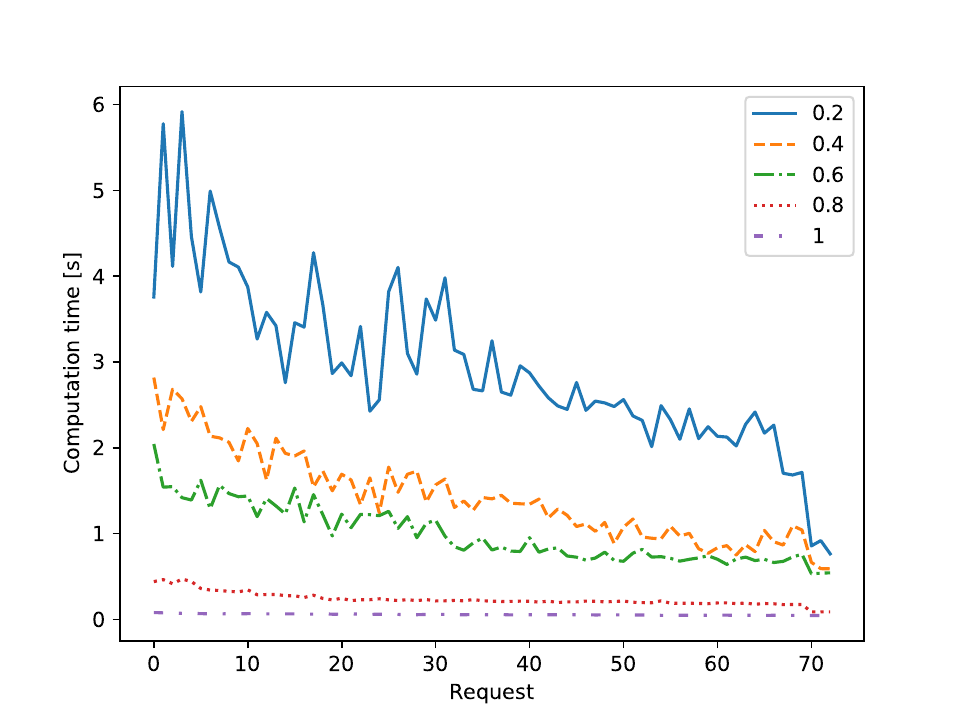}
        \captionsetup{format=hang}
		\caption{\centering Serial computation times for different \newline  CSF (\gls{HA})}
	\end{subfigure}
	\begin{subfigure}[t]{.49\textwidth}
		\centering		
		\includegraphics[width=\textwidth]{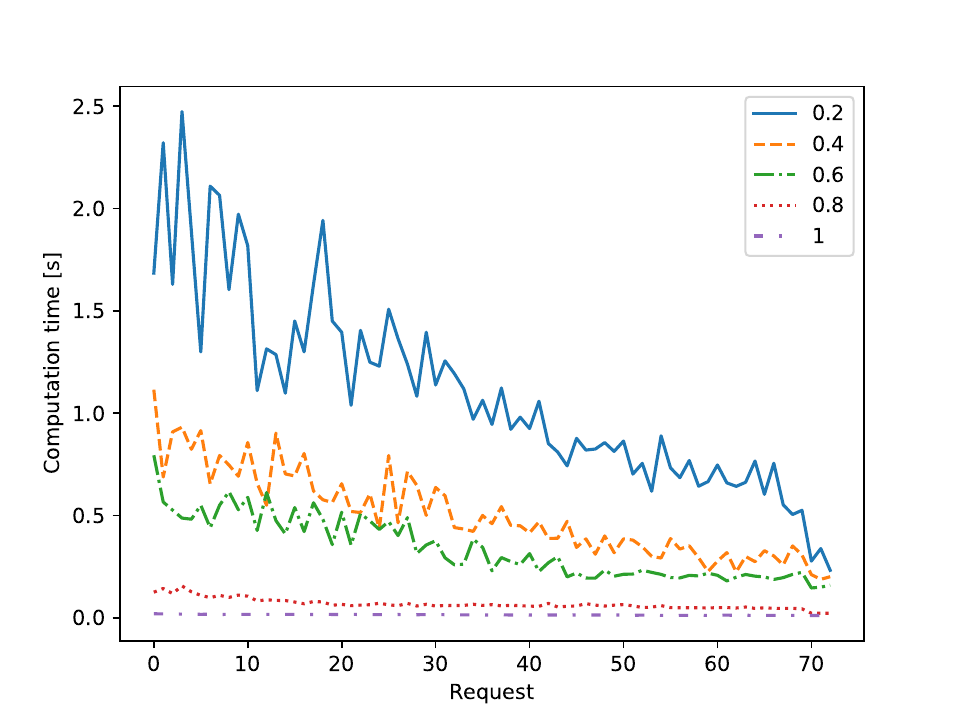}
        \captionsetup{format=hang}
		\caption{\centering Parallel computation times for different \newline CSF (\gls{HA})}
	\end{subfigure}

  \begin{subfigure}[t]{.49\textwidth}
		\centering		
		\includegraphics[width=\textwidth]{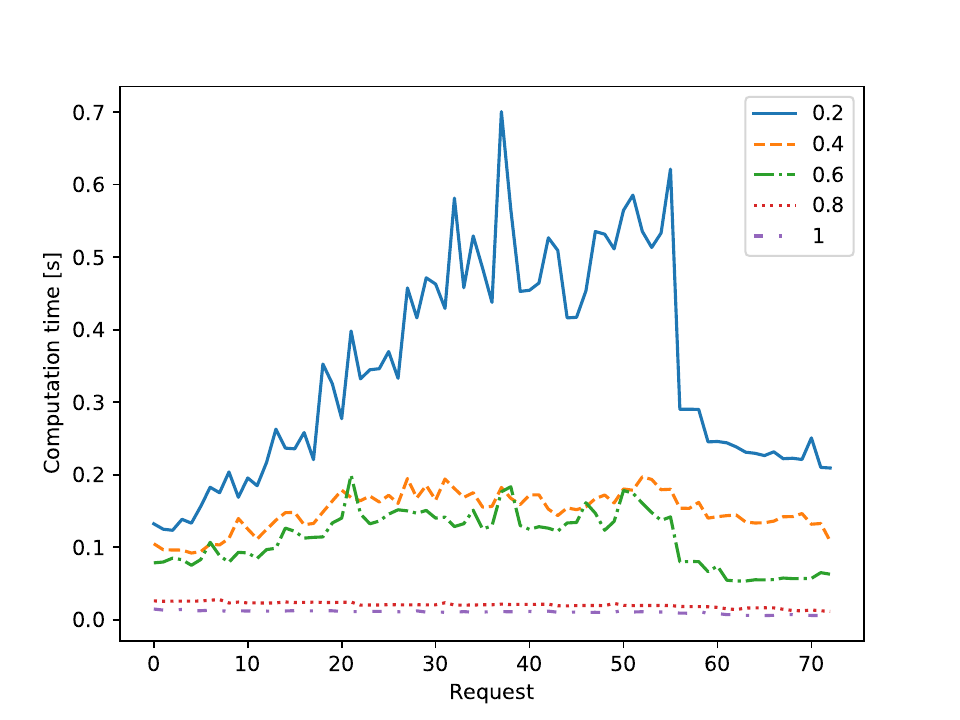}
        \captionsetup{format=hang}
		\caption{\centering Serial computation times for different \newline CSF (Myopic)}
	\end{subfigure}	
\end{figure}

Figure~\ref{fig: development computation time CSF} shows the difference in computation times for the different subproblems of the requests. 
We see that the more decisions on requests have already been taken, the faster our approaches can solve the subproblem of the current passenger request. 
Additionally, the later we have to decide on accepting or rejecting a request, the fewer future requests we have to consider in the resulting subproblems. The reduction in computation times over the course of our operational planning problem under uncertainty holds for both serial and parallel computation times for our \gls{2S-SP} and \gls{HA} approaches. 

Interestingly, our Myopic approach shows exactly the opposite development of computation times, i.e., an increase in computation times over the course of our operational planning problem under uncertainty. This suggests that the reduction in future scenarios is the main driver for the reduction in computation times for our \gls{2S-SP} and \gls{HA} approaches.

\begin{observation}
    Over the course of our operational planning problem under uncertainty, the subproblems of our \gls{2S-SP} and \gls{HA} approaches get easier to solve, while the subproblems of our Myopic approach get harder to solve. 
\end{observation}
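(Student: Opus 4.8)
The plan is to account for the two opposite monotone trends by pinning down, for each method, the single structural quantity that governs the size of the subproblem solved at decision time $\theta$ and how that quantity evolves as $\theta$ advances toward the routing horizon $T$. Every subproblem is an instance of the decomposed full-information Problem~\ref{prob: two stage Q decomp} (equivalently Problem~\ref{prob: offline arcs} with accepted requests fixed), so I would measure its effective difficulty by the number of \emph{free} binary acceptance variables $y_{r^\sigma}$ --- those not fixed to one by Constraint~\eqref{eq: two stage accept decomp} --- together with the number of optional stops that the routing variables $x^\sigma_{ij}$ may or must activate. Both quantities feed the branch-and-bound search space and the lazily separated subtour-elimination constraints~\eqref{eq: subtour elim}, so establishing a monotone trend in them is the natural route to explaining the observed runtime behaviour.

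First, for the \gls{2S-SP} and \gls{HA} subproblems I would write each scenario request set as $\sigma = S_\theta \cup \{r_\theta\} \cup \mathcal{F}$, exactly as in the proof of Theorem~\ref{theorem: infeasible subproblem}, where $\mathcal{F}$ is the sampled set of requests arriving after $\theta$. As $\theta \to T$ the number of remaining arrival epochs shrinks, so $|\mathcal{F}|$ decreases (monotonically in expectation under the arrival-time sampling of our instance generation); hence the count of free $y_{r^\sigma}$ variables falls, since the requests in $S_\theta$ are pinned to one by~\eqref{eq: two stage accept decomp}. Simultaneously, each fixed acceptance forces additional optional stops into every feasible route, so the routing component becomes increasingly determined rather than enlarged. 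I would argue that both effects contract the search space and thus explain the downward runtime trend.

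Second, for the Myopic subproblem we always have $\mathcal{F} = \emptyset$, i.e. $\sigma = S_\theta \cup \{r_\theta\}$, so there is effectively only the single free acceptance variable $y_{r_\theta}$ and the only growing quantity is $|S_\theta|$. Here difficulty is driven entirely by routing: as more requests are accepted, more mandatory optional-stop visits are forced, enlarging the time-window-constrained sequencing problem and producing the opposite, upward trend. I would close by corroborating these structural trends against Figure~\ref{fig: development computation time CSF}, which plots average serial and parallel times against the request index. The hard part is exactly this last link: because each subproblem is an NP-hard mixed-integer program solved by branch-and-bound with lazy subtour separation, solve time is not a deterministic function of problem size, so the statement cannot be obtained by a purely deductive size argument and must instead rest on the monotonicity of $|\mathcal{F}|$ and $|S_\theta|$ together with the experimental distributions.
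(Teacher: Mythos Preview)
Your approach is essentially the paper's: this is an empirical observation, justified by the computation-time curves in Figure~\ref{fig: development computation time CSF}, and explained structurally by the shrinking future-request set $\mathcal{F}$ for \gls{2S-SP}/\gls{HA} versus the growing accepted set $S_\theta$ for the Myopic approach.

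One point where your framing is slightly muddled compared with the paper: you argue that for \gls{2S-SP}/\gls{HA} the growing $S_\theta$ makes the routing component ``increasingly determined rather than enlarged,'' yet for the Myopic approach you say the very same growth of $S_\theta$ ``enlarges the time-window-constrained sequencing problem.'' These cannot both be right---fixing more acceptances forces more optional stops into the route in either method, which is a hardening effect. The paper handles this more cleanly: it observes the Myopic trend first, infers from it that the growing-$S_\theta$ effect is a hardening one, and then concludes that for \gls{2S-SP}/\gls{HA} the shrinking-$\mathcal{F}$ effect must \emph{dominate} it, since the net trend is downward. Your argument would be tighter if you acknowledged the two competing effects for \gls{2S-SP}/\gls{HA} and used the Myopic evidence to rank them, rather than claiming both push in the same direction.
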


\subsubsection{Computational effectiveness for varying numbers of scenarios}
In this section, we focus on how different numbers of scenarios in our \gls{2S-SP} and \gls{HA} approaches change the computational effectiveness of our approaches. Note that as our Myopic approach is independent of the number of scenarios, we report only one data point in the tables.

As expected, Table~\ref{tab: comparison computational effectiveness different scenarios 55} shows a nearly linear increase in serial computation times for our \gls{2S-SP} and \gls{HA} approaches for Route 55. We also expected nearly constant parallel computation times in our \gls{2S-SP} and \gls{HA} approaches over the different numbers of scenarios. This is the case for both our \gls{2S-SP} and \gls{HA} approaches. We also see that over all CSFs, even with a limited number of scenarios, our \gls{2S-SP} and \gls{HA} approaches improve the results of our Myopic approach by more than four percent. Even with an increase in scenarios, the solution quality of our \gls{2S-SP} and \gls{HA} approaches does not increase significantly.

\begin{table}[!b]
  \centering
  \caption{Comparison between \gls{2S-SP}, \gls{HA}, and Myopic with different number of future scenarios Route 55}
  \label{tab: comparison computational effectiveness different scenarios 55}
  \small
\begin{tabular}{cccccccccccc}
\multicolumn{1}{l}{} &  \multicolumn{4}{c}{\gls{2S-SP}} &  \multicolumn{4}{c}{\gls{HA}} &  \multicolumn{3}{c}{Myopic} \\
 \cmidrule(lr){2-5} \cmidrule(lr){6-9} \cmidrule(lr){10-12}
 Scenarios & Profit & SP & ser. & par. & Profit & SP & ser. & par. & Profit & SP & ser.\\
\midrule
5 & 22882.12 & 75.65 & 1.10 & 0.17 & 22793.50 & 75.37 & 0.73 & 0.18 \\
10 & 22905.57 & 75.74 & 2.32 & 0.19 & 22760.35 & 75.31 & 1.65 & 0.23 \\
20 & 22972.39 & 75.93 & 5.54 & 0.27 & 22778.48 & 75.25 & 3.19 & 0.27 & 21915.37 & 73.37 & 0.08 \\
30 & 22980.84 & 75.81 & 7.76 & 0.27 & 22828.88 & 75.49 & 4.59 & 0.28 \\
40 & 23021.10 & 75.79 & 12.10 & 0.32 & 22842.33 & 75.46 & 6.29 & 0.30 \\
\bottomrule
\end{tabular}
\end{table}

\begin{figure}[!b]
\centering
\vspace{-2ex}
	\begin{subfigure}[t]{.49\textwidth}
		\centering		
		\includegraphics[width=\textwidth]{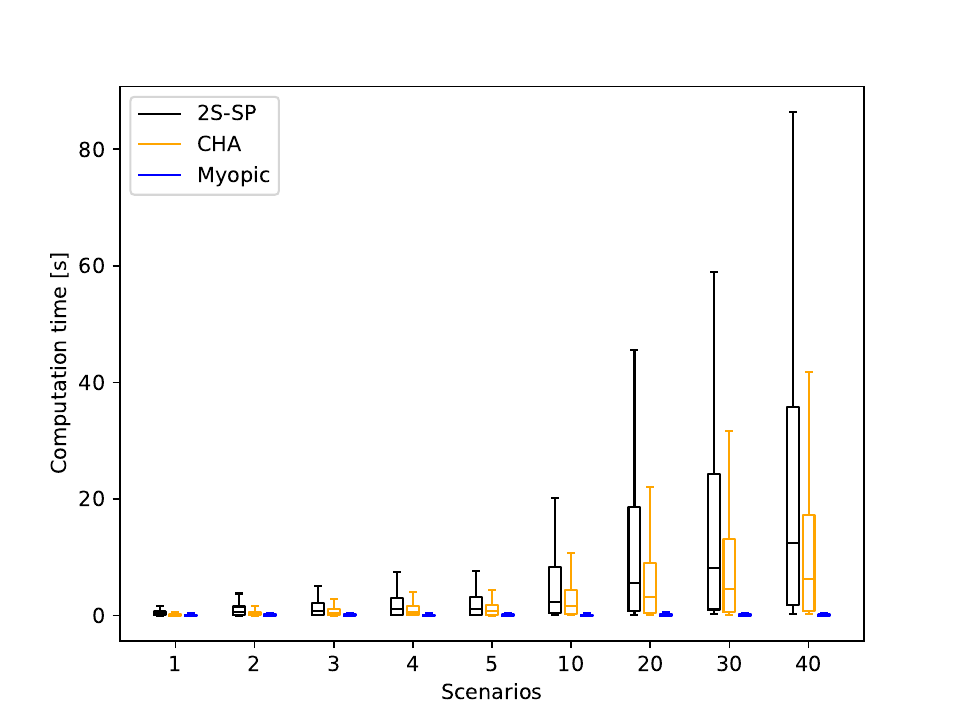}
        \captionsetup{format=hang}
		\caption{Serial computation times}
	\end{subfigure}
	\begin{subfigure}[t]{.49\textwidth}
		\centering		
		\includegraphics[width=\textwidth]{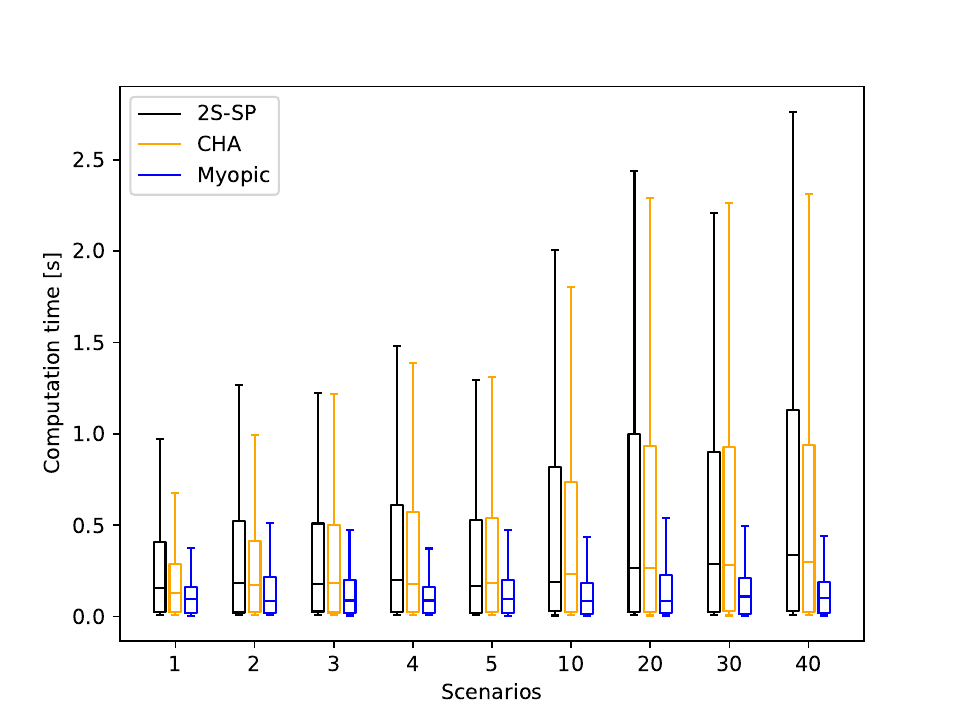}
        \captionsetup{format=hang}
		\caption{Parallel computation times}
	\end{subfigure}
 	\caption{Development of serial and parallel computation times based on different numbers of future scenarios for Route 55}
	\label{fig: number future scenarios}
\end{figure}

Figure~\ref{fig: number future scenarios} shows the distribution for the development of serial and parallel computation times for Route 55. Here, we see an increase in serial computation times and serial computation time variances when the number of scenarios increases, especially with our \gls{2S-SP} approach. As our Myopic approach is independent of the number of scenarios, its computation times remain constant. Apart from five scenarios, the parallel computation times and parallel computation time variances remain stable.  

Table~\ref{tab: comparison computational effectiveness different scenarios 155} confirms our observations of a nearly linear increase in serial computation times for our \gls{2S-SP} and GA approaches and nearly constant parallel computation times for Route 155. Additionally, we observe that a limited number of scenarios suffices for our \gls{2S-SP} and \gls{HA} approaches to determine better solutions than our Myopic approach. We elaborate on the quality of our operational planning problem policies under uncertainty in the next subsection, where we compare them to full information policies.

\begin{table}[!b]
  \centering
  \caption{Comparison between \gls{2S-SP}, \gls{HA}, and Myopic with different number of future scenarios Route 155}
  \label{tab: comparison computational effectiveness different scenarios 155}
  \small
\begin{tabular}{cccccccccccc}
\multicolumn{1}{l}{} &  \multicolumn{4}{c}{\gls{2S-SP}} &  \multicolumn{4}{c}{\gls{HA}} &  \multicolumn{3}{c}{Myopic} \\
 \cmidrule(lr){2-5} \cmidrule(lr){6-9} \cmidrule(lr){10-12}
 Scenarios & Profit & SP [\%] & ser. [s] & par. [s] & Profit & SP & ser. & par. & Profit & SP & ser.\\
\midrule
5 & 17283.31 & 82.47 & 0.55 & 0.06 & 17201.22 & 82.12 & 0.32 & 0.07 \\
10 & 17259.57 & 82.37 & 1.23 & 0.09 & 17237.89 & 82.22 & 0.76 & 0.10 \\
20 & 17267.28 & 82.42 & 2.71 & 0.10 & 17228.35 & 82.22 & 1.59 & 0.10 & 16852.43 & 81.01 & 0.04 \\
30 & 17264.25 & 82.47 & 4.22 & 0.12 & 17245.32 & 82.27 & 2.39 & 0.12 \\
40 & 17280.81 & 82.52 & 5.51 & 0.11 & 17260.91 & 82.32 & 3.40 & 0.13 \\
\bottomrule
\end{tabular}
\end{table}

\begin{figure}[!b]
\centering
\vspace{-2ex}
	\begin{subfigure}[t]{.49\textwidth}
		\centering		
		\includegraphics[width=\textwidth]{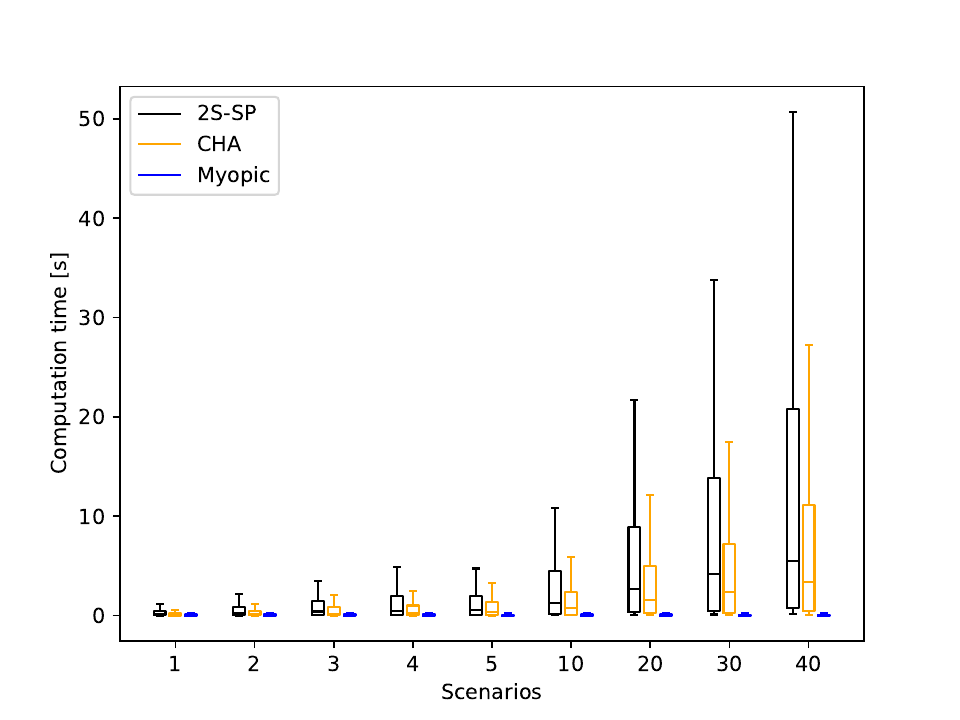}
        \captionsetup{format=hang}
		\caption{Serial computation times}
	\end{subfigure}
	\begin{subfigure}[t]{.49\textwidth}
		\centering		
		\includegraphics[width=\textwidth]{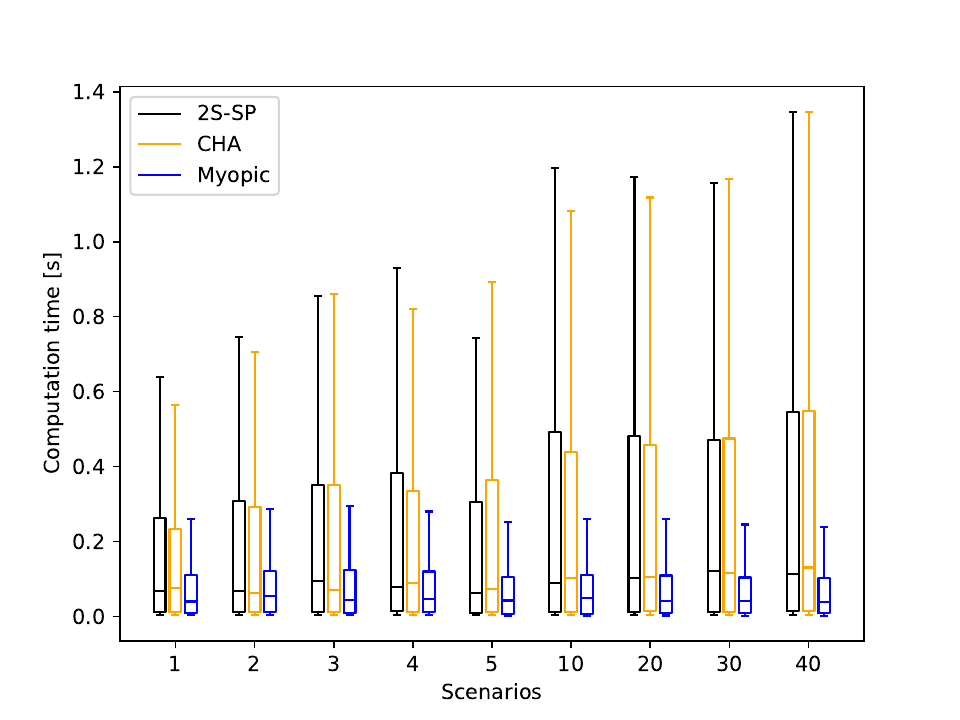}
        \captionsetup{format=hang}
		\caption{Parallel computation times}
	\end{subfigure}
 	\caption{Development of serial and parallel computation times based on different numbers of scenarios for Route 155}
	\label{fig: number future scenarios 155}
\end{figure}

Figure~\ref{fig: number future scenarios 155} shows that for Route 155, the serial computation time variance of our \gls{2S-SP} approach increase more with a growing number of scenarios than for our \gls{HA} approach. Contrary to our observations for Route 55, we see that for Route 155, the parallel computation times and parallel computation time variances in our \gls{2S-SP} approach slightly increase with a growing number of scenarios. Starting from 20 scenarios, the parallel computation times and parallel computation times variances stay nearly constant for our \gls{HA} approach. In both Routes 55 and 155, we see that the parallel computation times of our \gls{HA} approach are lower than our \gls{2S-SP} approach. This suggests that our subproblems are easier solvable without fixing the decision of a current request.

\begin{observation}
    The serial computation times of our \gls{2S-SP} and \gls{HA} approaches grow nearly linear with the number of scenarios. Additionally, the median parallel computation time stays constant with a growing number of scenarios. Note that this comes at the expense of additional memory usage. As our Myopic approach is independent of the number of scenarios, its serial computation times remain constant.
\end{observation}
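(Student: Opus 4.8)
The plan is to derive the three claims of this observation from the scenario decomposition of Section~\ref{chapter: sample based decomposition}, and to use the reported tables and figures only as confirmation of a single distributional hypothesis, not as the argument itself.

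First I would make the per-decision bookkeeping explicit. At every decision time $\theta$, the \gls{2S-SP} approach does not solve Problem~\ref{prob: two stage Q scenarios} monolithically but solves the $2|\Omega|$ independent full information problems $Q^{\sigma}(S_{\theta},0,\sigma)$ and $Q^{\sigma}(S_{\theta},1,\sigma)$ of Problem~\ref{prob: two stage Q decomp} and then compares the two weighted sums in Equation~\eqref{prob: decomp Q}; the \gls{HA} approach solves only $|\Omega|$ such problems; and the Myopic approach (Section~\ref{chapter: greedy}) solves exactly one, namely Problem~\ref{prob: two stage Q decomp} for $\sigma = S_{\theta}\cup r_{\theta}$. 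Writing $\tau_{\theta}^{\sigma}$ for the wall-clock time spent on the subproblem indexed by $\sigma$ at $\theta$, the serial running time of \gls{2S-SP} equals $\sum_{\theta=1}^{T-1}\sum_{\sigma\in\Omega}\bigl(\tau_{\theta}^{\sigma,0}+\tau_{\theta}^{\sigma,1}\bigr)+O(1)$, the $O(1)$ collecting the scenario-independent overhead of sampling $\Omega$ and aggregating objectives, and the \gls{HA} expression drops one branch. The Myopic claim is then immediate and fully rigorous: its single subproblem has data depending on $S_{\theta}$ and $r_{\theta}$ but not on $\Omega$, so its serial running time is literally invariant under changes of $|\Omega|$.

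Second, I would argue near-linearity for \gls{2S-SP} and \gls{HA}. Letting $\bar\tau$ be the average subproblem solve time over an instance, the serial time is $\approx 2|\Omega|(T-1)\bar\tau$ (respectively $|\Omega|(T-1)\bar\tau$), which is affine in $|\Omega|$; ``nearly'' is needed only because $\bar\tau$ is not constant --- enlarging $\Omega$ perturbs the empirical distribution of the request-set sizes $|R^{\sigma}_{\theta}|$ that drive each subproblem, individual solve times are random, and the accepted-request count $|S_{\theta}|$ is a policy outcome that depends only weakly on $|\Omega|$. I would make this precise by noting that, as the sample grows, $\bar\tau$ converges to the expected subproblem solve time under the true sample-path distribution, so the factor $|\Omega|$ dominates over the scenario counts considered. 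The same decomposition gives a parallel lower bound of $\sum_{\theta}\max_{\sigma}\bigl(\tau_{\theta}^{\sigma,0}+\tau_{\theta}^{\sigma,1}\bigr)$ rather than a sum over $\sigma$; since the per-subproblem solve times come from a common, sharply concentrated distribution --- its dispersion is visibly small relative to its mean in Figures~\ref{fig: number future scenarios} and~\ref{fig: number future scenarios 155} --- the maximum of $|\Omega|$ such draws grows only as slowly as its upper tail, so the \emph{median} of this maximum across the five instances is essentially flat once $|\Omega|$ is moderate. The memory caveat is simply that holding $2|\Omega|$ model objects in memory at once scales linearly in $|\Omega|$.

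The main obstacle is that these are statements about empirical running times, so a fully formal proof is out of reach; the genuinely rigorous content is the identity ``serial time $=\sum_{\theta}\sum_{\sigma}\bigl(\tau_{\theta}^{\sigma,0}+\tau_{\theta}^{\sigma,1}\bigr)+O(1)$'' together with ``parallel time $=\sum_{\theta}\max_{\sigma}(\cdot)$'', both of which follow directly from the scenario decomposition and from Theorem~\ref{theorem: infeasible subproblem}, which guarantees that no subproblem is vacuously infeasible. Everything else rests on one mild, data-supported hypothesis: subproblem solve times behave like i.i.d.\ draws from a concentrated distribution whose parameters are stable in $|\Omega|$. I would therefore close by presenting Tables~\ref{tab: comparison computational effectiveness different scenarios 55} and~\ref{tab: comparison computational effectiveness different scenarios 155} together with Figures~\ref{fig: number future scenarios} and~\ref{fig: number future scenarios 155} as verification of exactly that hypothesis --- linear growth of the serial columns, a flat median of the parallel columns, and a constant Myopic column --- rather than as the proof.
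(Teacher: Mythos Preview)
Your proposal is correct, and it is considerably more analytical than what the paper does. In the paper this statement is an \emph{Observation}, not a theorem: it is asserted as a direct reading of Tables~\ref{tab: comparison computational effectiveness different scenarios 55} and~\ref{tab: comparison computational effectiveness different scenarios 155} and Figures~\ref{fig: number future scenarios} and~\ref{fig: number future scenarios 155}, with no attempt at a structural argument beyond the informal remark ``As expected'' preceding the tables. You instead derive the three claims from the decomposition architecture of Section~\ref{chapter: sample based decomposition} --- serial time as a sum of $2|\Omega|$ (resp.\ $|\Omega|$, resp.\ $1$) subproblem solves per decision time, parallel time as a maximum --- and invoke the empirical data only to certify that per-subproblem solve times are concentrated and stable in $|\Omega|$. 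This buys you an explanation of \emph{why} the scaling is linear and the median parallel time flat, whereas the paper only documents \emph{that} it is.

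One small correction: your parallel lower bound for \gls{2S-SP} should be $\sum_{\theta}\max_{\sigma,\,b\in\{0,1\}}\tau_{\theta}^{\sigma,b}$ rather than $\sum_{\theta}\max_{\sigma}\bigl(\tau_{\theta}^{\sigma,0}+\tau_{\theta}^{\sigma,1}\bigr)$, since the paper defines the parallel bound as ``the maximum runtime of the independent subproblems for each request decision'' and all $2|\Omega|$ subproblems run concurrently. This does not change your qualitative conclusion --- the maximum of $2|\Omega|$ draws from a concentrated distribution is still essentially flat in $|\Omega|$ --- but it is worth stating correctly.
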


\begin{observation}
    The variance in parallel computation time is lower for our \gls{HA} compared to our \gls{2S-SP} approach. Accordingly, not fixing the decision of whether to accept or reject a passenger request in our subproblems leads to more stable computation times.
\end{observation}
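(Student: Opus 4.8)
The plan is to treat this Observation as an empirical claim and to support it with (a) a precise statement of the dispersion being compared and (b) a structural argument from the decomposition of Section~\ref{chapter: sample based decomposition} explaining why the effect must arise, and only then to read the conclusion off Figures~\ref{fig: number future scenarios} and~\ref{fig: number future scenarios 155}. First I would pin down the comparison: at any decision time $\theta$ the parallel computation time is the maximum solve time over the independent subproblems spawned at $\theta$, and the quantity of interest is the spread (box height / whisker length, i.e.\ interquartile range and extreme percentiles) of these per-decision parallel times, pooled over decision times, instances, and the reported scenario counts $\{5,10,20,30,40\}$. The claim is that this spread is smaller for the \gls{HA} approach than for the \gls{2S-SP} approach.

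Next I would establish the structural difference that drives the claim. By the decomposition, at each $\theta$ the \gls{2S-SP} approach solves $2|\Omega|$ instances of Problem~\ref{prob: two stage Q decomp}: for every $\sigma$ one instance with $y_{r_\theta^\sigma}$ forced to $0$ and one with $y_{r_\theta^\sigma}$ forced to $1$ through Constraint~\eqref{eq: linking two stage Q decomp}, whereas the \gls{HA} approach solves only $|\Omega|$ instances in which that equality is dropped and $y_{r_\theta^\sigma}$ is a free binary variable. Forcing $y_{r_\theta^\sigma}=1$ obliges the vehicle to detour to an optional stop of $r_\theta$, which tightens the time-window constraints~\eqref{eq: two stage node times decomp} and~\eqref{eq: two stage time windows decomp}; by Theorem~\ref{theorem: infeasible subproblem} the feasible region is then either nonempty for all $\sigma$ or empty for all $\sigma$, but in the nonempty case little routing slack may remain, which is exactly the regime in which branch-and-bound occasionally explores many nodes. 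I would argue that this yields a heavy-tailed subproblem-runtime distribution for \gls{2S-SP}, and since the parallel time is a maximum over subproblems it inherits that tail; by contrast, leaving $y_{r_\theta^\sigma}$ free hands the solver the trivially feasible ``reject whatever overruns'' route as an early incumbent, so the \gls{HA} subproblem runtimes are more concentrated and their maximum more stable. I would then close the argument by checking the boxplots: the \gls{HA} parallel-time panels should sit inside the \gls{2S-SP} panels across all scenario counts on both Route~55 and Route~155, reinforced by the earlier Observation that \gls{2S-SP} solves twice as many subproblems, which can only inflate the maximum further.

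The main obstacle is that this is not a mathematical theorem: there is no closed-form model of the branch-and-bound running time, so a fully rigorous version would need either a probabilistic runtime model under which ``fixing a binary variable stochastically dominates, in the upper tail, not fixing it'' can be proved, or an explicit admission that the statement is empirical and that the figures carry it. I expect the paper takes the latter route, so the honest form of the ``proof'' is the pooled-dispersion computation together with the structural heuristic above rather than a deductive argument; the only genuinely delicate point would be ruling out that the extra $|\Omega|$ subproblems of \gls{2S-SP}, not the fixing of $y_{r_\theta}$, are solely responsible for the larger spread, which one could address by comparing against an \gls{HA} variant that still solves the ``accept'' branch.
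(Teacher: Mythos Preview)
Your proposal is correct and, in fact, considerably more thorough than the paper's own treatment. In the paper this statement is an \emph{Observation}, not a theorem: its entire support is the discussion of Figures~\ref{fig: number future scenarios} and~\ref{fig: number future scenarios 155} immediately preceding it, together with the single sentence ``This suggests that our subproblems are easier solvable without fixing the decision of a current request.'' There is no structural argument, no formalisation of the dispersion metric, and no attempt to disentangle the ``twice as many subproblems'' confound you raise. Your plan---pin down the dispersion, give a branch-and-bound heuristic for why fixing $y_{r_\theta^\sigma}=1$ fattens the runtime tail, then read off the boxplots---is the right way to substantiate an empirical claim of this kind, and your caveat about the $2|\Omega|$-versus-$|\Omega|$ confound is a genuine methodological point the paper does not address.
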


\subsubsection{Performance analysis of our approaches}
Our \gls{2S-SP} and \gls{HA} models use a set of scenarios to approximate the distribution of future passenger requests. In this context, one major question that arises is how many scenarios are necessary to approximate future demands so that we achieve a good approximation of future passenger demands and thus can maximize the operators' profit accordingly. To answer this question, we use our results from the previous section and compare them against the optimal full information policy obtained through Problem~\ref{prob: offline arcs}. Specifically, we compare our solutions obtained in a dynamic stochastic problem setting to the deterministic problem setting in which all requests are revealed at the same time.

Figure~\ref{fig: revenue scenarios} shows the optimality gap, i.e., \textit{1 - (objective of our policy under uncertainty / objective of full information policy)},  between our policies under uncertainty and optimal full information policies based on the number of scenarios for Routes 55 and 155 over all CSF configurations. Note that our Myopic approach is independent of the number of scenarios. We see that even with only five scenarios, both our \gls{2S-SP} and \gls{HA} approaches obtain good solutions. Additionally, we see that increasing the number of scenarios to more than five shows no significant improvements.

\begin{figure}[!b]
\centering
\vspace{-2ex}
	\begin{subfigure}[t]{.49\textwidth}
		\centering		
		\includegraphics[width=\textwidth]{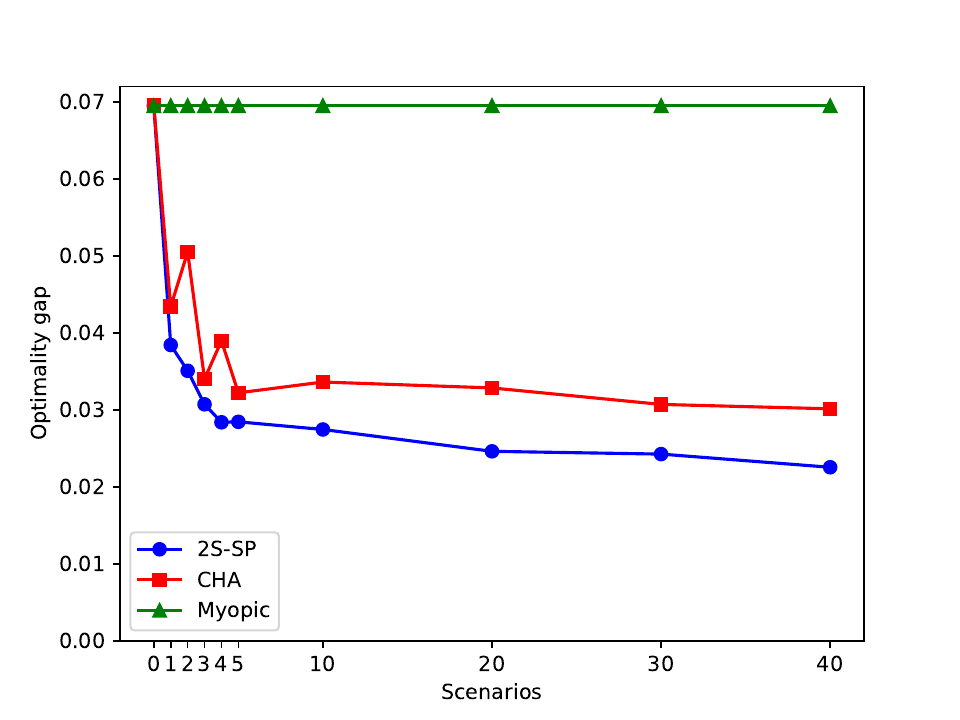}
        \captionsetup{format=hang}
		\caption{Optimality gap for Route 55}
	\end{subfigure}
	\begin{subfigure}[t]{.49\textwidth}
		\centering		
		\includegraphics[width=\textwidth]{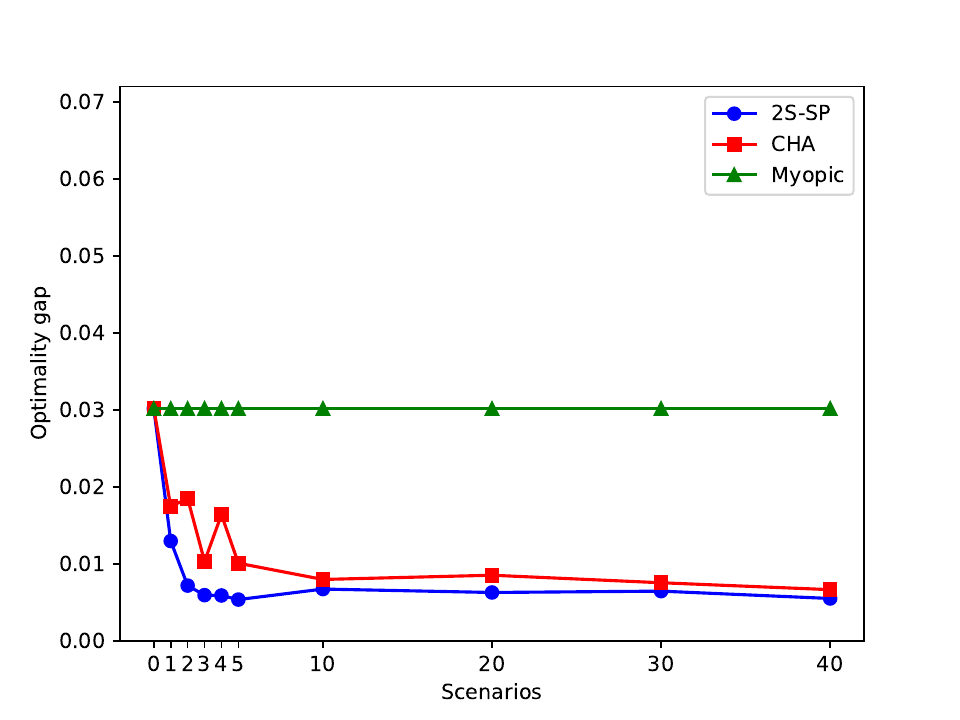}
        \captionsetup{format=hang}
		\caption{Optimality gap for Route 155}
	\end{subfigure}
 	\caption{Difference in profit based on the number of scenarios for Route 55 and 155}
	\label{fig: revenue scenarios}
\end{figure}

Figure~\ref{fig: revenue CSF} shows the difference in profit for varying CSFs for Routes 55 and 155 over all number of scenarios compared to the optimal full information policy. We see that especially for routes with higher flexibility, i.e., CSF of 0.2 and 0.4 our \gls{2S-SP} and \gls{HA} approaches perform much better than our Myopic approach. Additionally, we also see that for very low and very high route flexibility, i.e., CSF of 0.2, 0.8, and 1, especially our \gls{2S-SP} and \gls{HA} approaches are close to the optimal full information policy. The reason for that is that with very high flexibility, most of the passengers can be served, and with very low flexibility, the route is mostly fixed. Both cases are relatively easy to handle. As a \gls{DAS} combines flexibility with predictability, especially a CSF between $0.4 - 0.8$ is of interest. Here, our Myopic approach struggles to find good policies for Route 55. 

\begin{figure}[!t]
\centering
\vspace{-2ex}
	\begin{subfigure}[t]{.45\textwidth}
		\centering		
		\includegraphics[width=\textwidth]{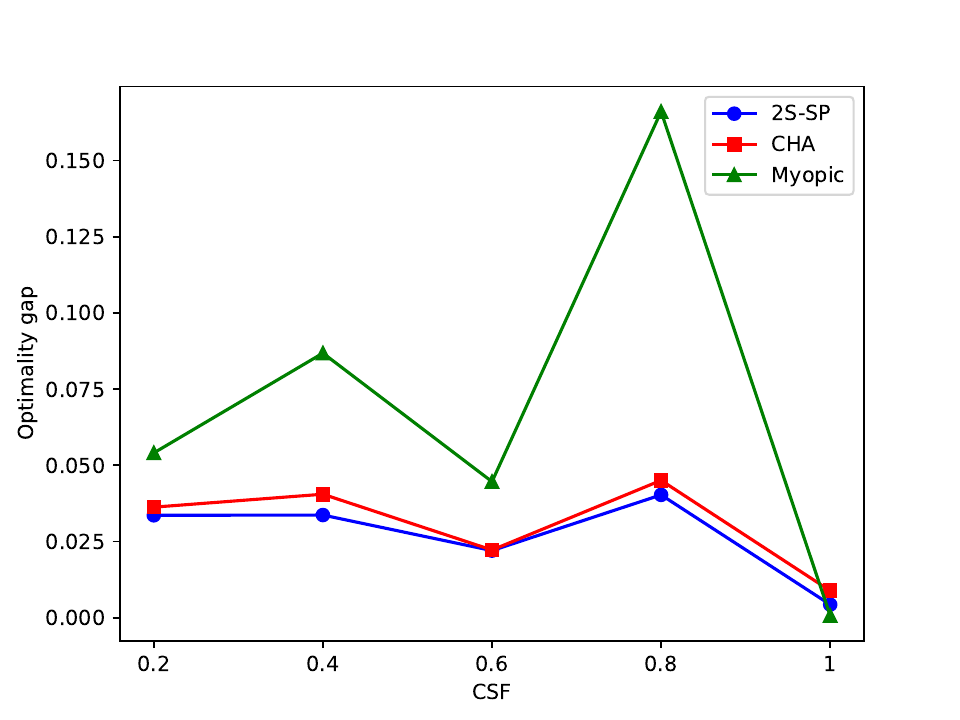}
        \captionsetup{format=hang}
		\caption{Profit for Route 55}
	\end{subfigure}
	\begin{subfigure}[t]{.45\textwidth}
		\centering		
		\includegraphics[width=\textwidth]{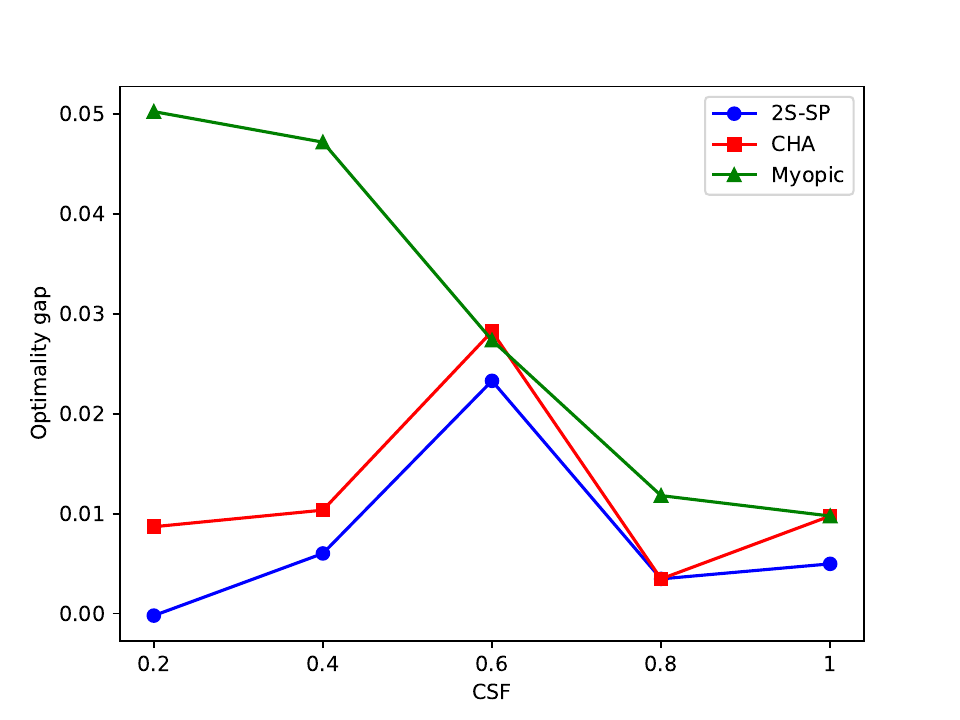}
        \captionsetup{format=hang}
		\caption{Profit for Route 155}
	\end{subfigure}
  	\caption{Difference in profit based on varying CSFs for Route 55 and 155}
	\label{fig: revenue CSF}
\end{figure}

\begin{observation}
    Even with a limited amount of scenarios, our \gls{2S-SP} and \gls{HA} approaches can determine good policies under uncertainty. Especially in high and low flexibility instances, our \gls{2S-SP} and \gls{HA} approaches perform better than our Myopic approach. Additionally, with decreasing flexibility, all our approaches can determine close to optimal full information policies. 
\end{observation}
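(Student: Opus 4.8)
The statement is an empirical observation, so the plan is to establish it through the computational protocol already described rather than through a formal derivation. The backbone is the optimality-gap metric $g = 1 - z_{\mathrm{pol}}/z_{\mathrm{FI}}$, where $z_{\mathrm{pol}}$ is the profit realized by a policy (\gls{2S-SP}, \gls{HA}, or Myopic) on a given demand realization and $z_{\mathrm{FI}}$ is the profit of the optimal full-information solution obtained by solving Problem~\ref{prob: offline arcs} to optimality on that same realization. First I would fix the instance grid, namely Routes 55 and 155, the CSF grid $\{0.2,0.4,0.6,0.8,1.0\}$, the scenario counts $\{5,10,20,30,40\}$, and five instances per configuration, and for every realized sample path solve the full-information benchmark once to obtain the denominator. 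This makes each gap a well-posed number rather than a ratio against a mere bound, which is essential for the comparative claims.

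Second, to substantiate that a limited number of scenarios suffices, I would hold the CSF fixed and plot $g$ for \gls{2S-SP} and \gls{HA} as a function of the scenario count, aggregating over CSF and instances (Figure~\ref{fig: revenue scenarios}). The claim then reduces to showing that $g$ is already small at five scenarios and essentially flat thereafter; its quantitative content is that the marginal gap reduction from adding scenarios beyond five lies within the instance-to-instance variance, so that no statistically meaningful improvement is detectable.

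Third, for the flexibility-dependent claims I would hold the scenario count fixed and plot $g$ against CSF for all three approaches on both routes (Figure~\ref{fig: revenue CSF}). The comparative claim that \gls{2S-SP} and \gls{HA} beat Myopic is established by exhibiting a positive gap separation in the high-flexibility (low-CSF) regime, where the Myopic gap is large because acceptance decisions taken without anticipation foreclose future, more profitable requests. The convergence claim, that with decreasing flexibility all approaches approach the full-information optimum, is established by showing $g \to 0$ as CSF grows, supported by the mechanism that at high CSF the schedule and the time windows at the compulsory stops pin down the route almost entirely, so the action space at decision time $T$ collapses and even myopic acceptance is near-optimal; symmetrically, at very low CSF nearly every request is servable, so the acceptance decision is rarely binding.

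The hard part will be the comparative and regime portion rather than the scenario-count portion. The observation as worded asserts an advantage \emph{especially in high and low flexibility instances}, yet the Myopic gap is non-monotone in CSF, so I must separate the two distinct mechanisms, route rigidity at high CSF versus slack capacity at low CSF, that both drive all methods toward the benchmark, while isolating the intermediate regime where anticipation matters most and the \gls{2S-SP}/\gls{HA} advantage is genuinely attributable to scenario look-ahead rather than to instance noise. Controlling this requires enough instances per cell that the across-method gap differences exceed their sampling variability, and it is exactly this confound, distinguishing a real policy effect from variance in a five-instance aggregate, that I expect to be the delicate step.
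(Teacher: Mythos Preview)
Your proposal is correct and follows essentially the same approach as the paper: the observation is substantiated empirically by computing the optimality gap $1 - z_{\mathrm{pol}}/z_{\mathrm{FI}}$ against the full-information benchmark (Problem~\ref{prob: offline arcs}) and displaying it in exactly the two slices you describe, gap versus scenario count (Figure~\ref{fig: revenue scenarios}) and gap versus CSF (Figure~\ref{fig: revenue CSF}), on the same instance grid. Your added discussion of the two mechanisms (route rigidity at high CSF, slack capacity at low CSF) and of the sampling-variability caveat goes slightly beyond what the paper spells out, but it is fully consistent with the paper's explanation and does not change the argument.
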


\subsection{Practical insights}
From the perspective of a public transportation system operator, a natural question is whether it is beneficial to convert a fixed-line bus route to a \gls{DAS}. To answer this question, our analysis in this section is twofold. First, we compare the average profits, percentages of passengers served, and travel times between the fixed-line bus routes, a \gls{DAS} under uncertainty, and a \gls{DAS} in the full information setting. Second, we analyze the impact of passenger willingness to walk and route flexibility on profit, served passengers, and the cost per passenger.

\subsubsection{Comparison between fixed-line and a \gls{DAS}}
One of our main motivations for the operational planning problem under uncertainty is to be able to compare fixed-line public bus transportation systems with a \gls{DAS} on an operational level. To do so, we solve Problem~\ref{prob: offline arcs} with fixed arcs between the compulsory stops for a CSF of one, on five different instances. This gives us average profits, percentages of passengers served, and travel times for a fixed-line bus transportation system. Afterward, we determine the same metrics for the full information \gls{DAS} setting through Problem~\ref{prob: offline arcs} without fixed arcs and for the \gls{DAS} setting under uncertainty through Problem~\ref{prob: decomp Q}. In all three settings, the walking distance is 250 meters, the utility is 750, and in the two \gls{DAS} settings, the CSF $\in [0.2, 0.4, 0.6, 0.8, 1]$. Note that we only consider the fixed-line bus setting over the compulsory stops of the original routes, i.e., only for a CSF of one.

\begin{figure}[!b]
 	\begin{subfigure}[t]{.49\textwidth}
		\centering		
		\includegraphics[width=\textwidth]{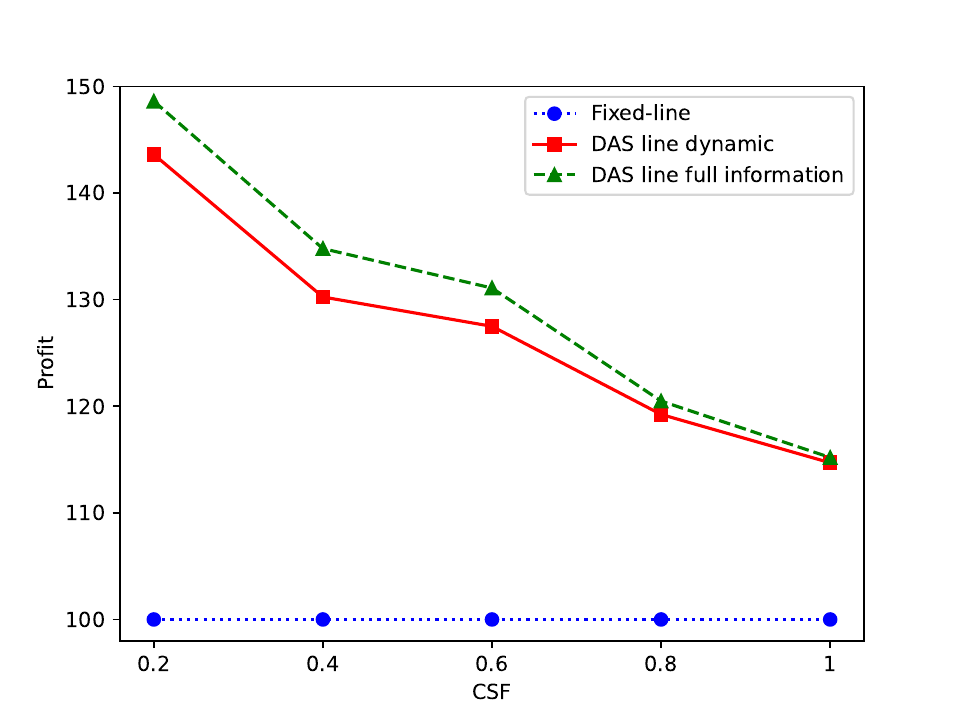}
        \captionsetup{format=hang}
		\caption{\centering Difference in profits between fixed-line and a \gls{DAS} settings}
	\end{subfigure}
	\begin{subfigure}[t]{.49\textwidth}
		\centering		
		\includegraphics[width=\textwidth]{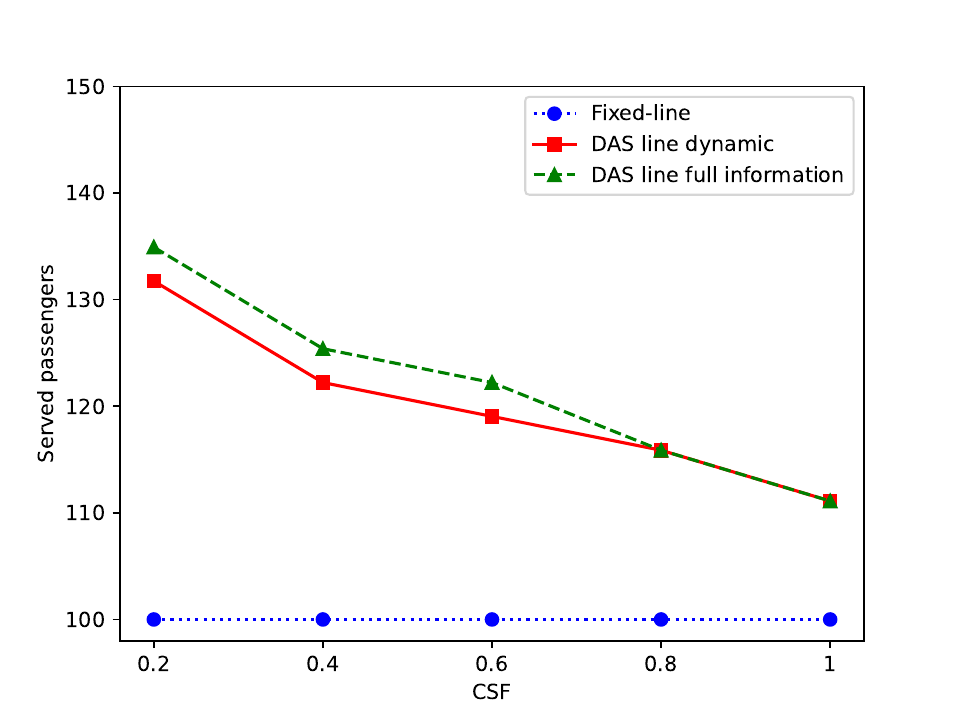}
        \captionsetup{format=hang}
		\caption{\centering Difference in served passengers between fixed-line and a \gls{DAS} settings}
	\end{subfigure}

  \begin{subfigure}[t]{.49\textwidth}
		\centering		
		\includegraphics[width=\textwidth]{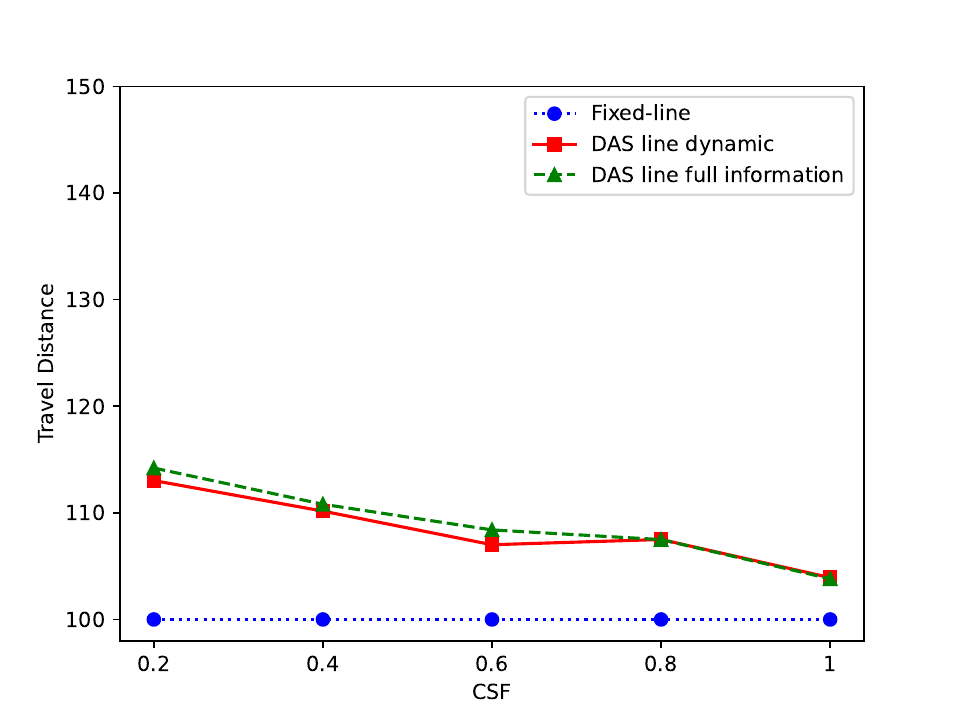}
        \captionsetup{format=hang}
		\caption{\centering Difference in travel times between fixed-line and a \gls{DAS} settings}
	\end{subfigure}	
 \caption{Increase in profit, served passengers, and travel times compared to a fixed-line baseline setting as 100 for Route 55}
 \label{fig: comparison fixed-line and DAS metrics}
\end{figure}

Figure~\ref{fig: comparison fixed-line and DAS metrics} shows the comparison between fixed-line and \gls{DAS} settings. Here, the metrics of the solution of the fixed-line setting are scaled to the baseline of 100, e.g., a profit of 115 for the dynamic \gls{DAS} setting constitutes a 15\% increase in profit compared to the fixed-line setting.
We see that even for a CSF of one, both \gls{DAS} settings generate 15\% more profit, serve 11\% more passengers, while having only 4\% more travel time. With growing flexibility, i.e., a decrease in CSFs, these metrics increase to 44 - 49\% more profit, serving 32 - 35\% more passengers, with an increase of 13 - 14\% in travel time. Over all CSF, we see that in our route, an increase of one percent in travel distance leads to an increase of 2.7 - 4.7\% in profits and 2.2 - 3.7\% in passengers served in our setting under uncertainty. Additionally, an increase of one percent in travel distance leads to an increase of 2.9 - 5\% in profits and 2.3 - 3.7\% in passengers served in our full information setting. An analysis of Route 155 yields similar results.

\begin{observation}
    Switching from a fixed-line setting to a \gls{DAS} increases profits and passengers served at the cost of longer travel distances of the bus and thus its passengers. With a travel distance increase of one percent, a \gls{DAS} can achieve up to 4.7\% more profit, serving up to 3.7\% more passengers in a setting under uncertainty. 
\end{observation}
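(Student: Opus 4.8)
The statement is empirical, so the plan is to substantiate it through a controlled three-policy comparison on a common instance set rather than through an analytical argument. First I would fix the experimental protocol: for each value of CSF $\in \{0.2, 0.4, 0.6, 0.8, 1\}$ and each of five randomly generated demand realizations, I would (i) solve Problem~\ref{prob: offline arcs} with the arcs between consecutive compulsory stops fixed to obtain the fixed-line benchmark — meaningful only at CSF $=1$, since then the route is exactly the compulsory-stop sequence — (ii) solve Problem~\ref{prob: offline arcs} with those arcs free to obtain the full-information \gls{DAS} optimum, and (iii) run the rolling-horizon policy of Problem~\ref{prob: decomp Q} to obtain the \gls{DAS} outcome under uncertainty. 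Throughout I would hold the walking distance at $250$ meters and the passenger utility at $750$, so the only varying primitive is route flexibility.

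Second, I would normalize every metric of the fixed-line solution to $100$ and record, for each CSF and each of the two \gls{DAS} policies, the relative profit, the relative number of served passengers, and the relative bus travel distance, averaged over the five instances. From the resulting table, summarized in Figure~\ref{fig: comparison fixed-line and DAS metrics}, the qualitative half of the claim is immediate: at CSF $=1$ both \gls{DAS} policies already dominate the fixed line (roughly $+15\%$ profit, $+11\%$ passengers, $+4\%$ distance), and as CSF decreases the gains grow steadily to about $+44$--$49\%$ profit and $+32$--$35\%$ passengers at the price of $+13$--$14\%$ distance. To obtain the sharper quantitative statement I would then divide the percentage improvement in profit (resp. in served passengers) by the percentage increase in travel distance for each CSF, yielding the per-unit-distance ``elasticities'' of $2.7$--$4.7\%$ profit and $2.2$--$3.7\%$ passengers under uncertainty (and the slightly larger $2.9$--$5\%$ / $2.3$--$3.7\%$ under full information, which also certifies that the uncertainty policy loses little relative to the clairvoyant bound). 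Finally, I would repeat the entire protocol for Route 155 to confirm the conclusion is not an artifact of a single topology.

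The main obstacle is statistical rather than analytical: the observation is only as strong as the underlying sample, so the delicate step is arguing that five instances per configuration — together with two independent route topologies and the monotone CSF sweep — suffice to rule out that the reported ratios are driven by the particular choices of utility ($750$), walking radius ($250$~m), or the iterative-proportional-fitting demand model. I would address this by reporting the dispersion of the ratios across instances, using box plots analogous to those produced for the computation-time studies, and by emphasizing that the \emph{direction} of the effect — flexibility trades a modest increase in bus distance for sizeable gains in profit and coverage — holds in every cell of the design, even though the exact percentages shift with the parameters.
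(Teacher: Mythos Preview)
Your proposal is correct and mirrors the paper's approach almost exactly: the same three-policy comparison (fixed-line via Problem~\ref{prob: offline arcs} with pinned arcs at CSF $=1$, full-information \gls{DAS} via Problem~\ref{prob: offline arcs}, dynamic \gls{DAS} via Problem~\ref{prob: decomp Q}), the same parameter settings (walking distance $250$~m, utility $750$, five instances per CSF), the same normalization to $100$, and the same per-percent-of-distance ratios, with Route~155 as a confirmatory replicate. The only addition is your suggestion to report dispersion via box plots, which the paper does not do for this particular observation but which is a sensible strengthening.
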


\subsubsection{Impact of passenger walking willingness and route flexibility}

Our model allows us to analyze the number of passengers served and costs per served passenger based on different \gls{DAS} assumptions, i.e., different CSFs, which indicates the degree of flexibility and the walking distance passengers are willing to walk to and from stops. Our analysis in this section is twofold. First, we analyze the impact of those two parameters on the number of passengers served and achievable profit. Second, we investigate how CSF and walking distance influence the cost per passenger served. To do so, we compare the solutions of our exact approach with five scenarios for a uniform passenger utility of 750, a CSF $\in [0.2, 0.4, 0.6, 0.8, 1]$, and a walking distance~$\in~[100, 150, 250, 300, 350, 400]$ on Route 55 and 155.

\subsubsection*{Profit and passengers served}
Our model presumes a CSF indicating the flexibility of the \gls{DAS} line and a threshold distance passengers are willing to walk to nearby stops. To understand which parameter drives profit and the number of served passengers, we aggregate results on instances according to the previously mentioned parameter values for the CSF and the walking distance.

\begin{figure}[!t]
\centering
\begin{subfigure}[t]{.45\textwidth}
	\centering
 \includegraphics[width=1.1\textwidth]{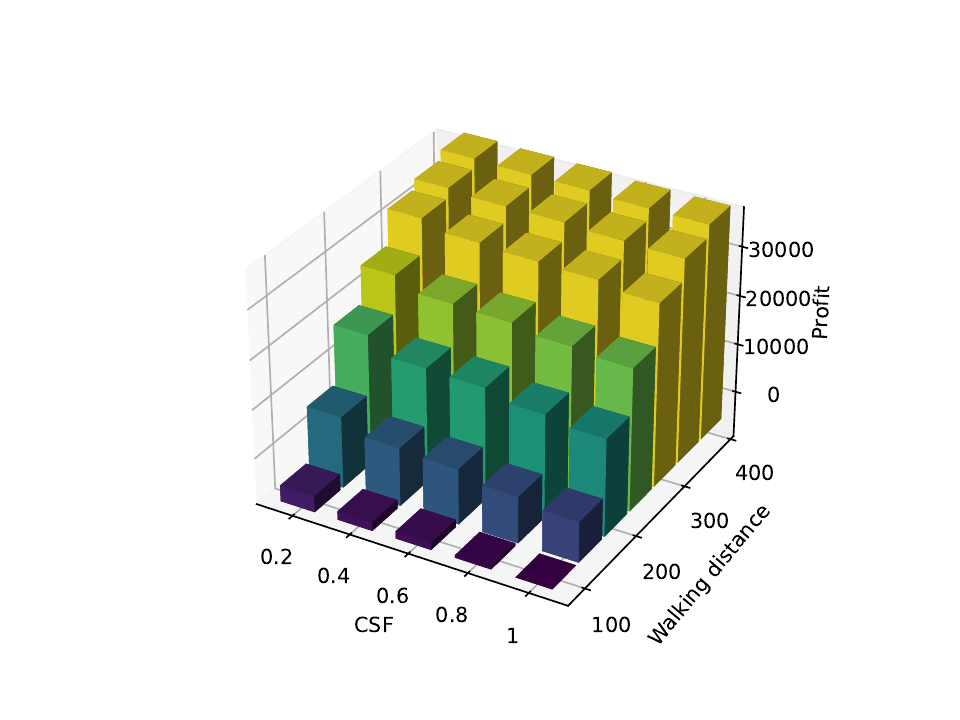}
    \captionsetup{format=hang}
	\caption{Average operator profit}
		\label{fig: revenue development}
\end{subfigure}
\begin{subfigure}[t]{.45\textwidth}
	\centering
 \includegraphics[width=1.1\textwidth]{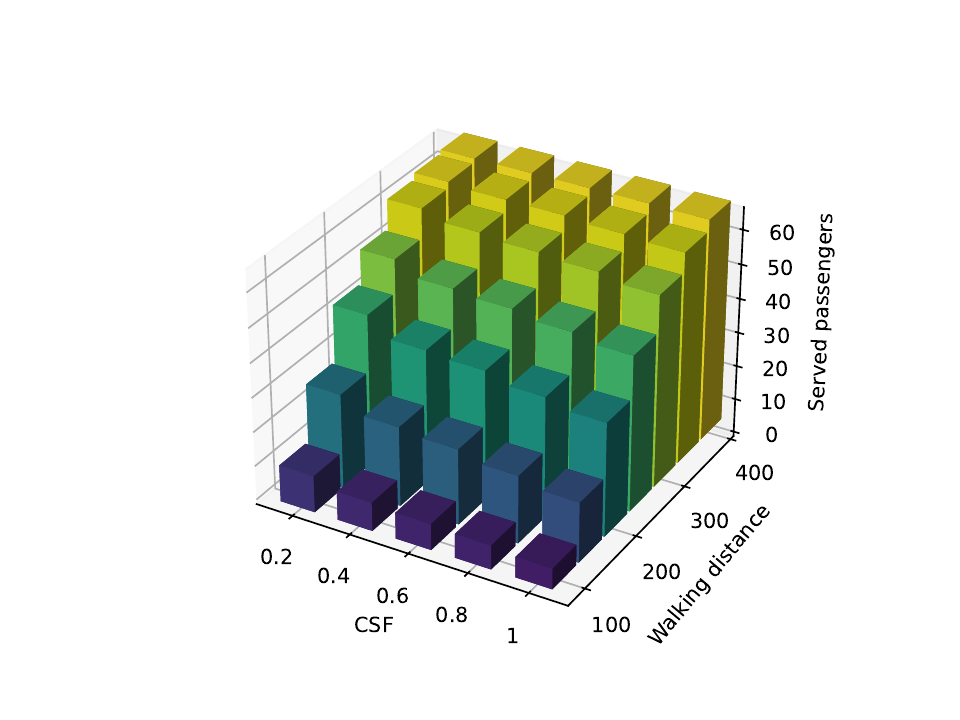}
    \captionsetup{format=hang}
	\caption{Average number of served passengers}
		\label{fig: served passengers development}
\end{subfigure}
\caption{Development of operator profit and number of served passengers based on route flexibility (CSF) and walking distance for Route 55}
\label{fig: walking distance and compulsory stop factor impact route 55}
\end{figure}

    Figure~\ref{fig: walking distance and compulsory stop factor impact route 55} shows that for Route 55, the profit increases from -8,549 to 36,961 with increasing route flexibility and passenger walking willingness, and the number of served passengers increases from 6 to 65.1. Here, walking distances larger than 300 meters allow the operator to serve between 86-100\% of all passenger requests. 
    We see that especially for walking distances up to 250 meters, decreasing the CSF from 1 to 0.2 increases the profit by 59\% from 11,453 to 18,221 and the number of served passengers by 19\% from 45.4 to 53.8. At the same time, increasing the walking distance from 100 to 250 meters increases the profit from -7,133 to 22,906, and passengers served by 620\% from 7.9 to 49.1. This shows that even though a higher degree of flexibility allows an operator to serve more passengers and thus increase profit, incentivizing passengers to walk slightly larger distances to and from their stops represents a larger lever to increase profit.

\begin{figure}[!t]
\centering
\begin{subfigure}[t]{.45\textwidth}
	\centering
 \includegraphics[width=1.1\textwidth]{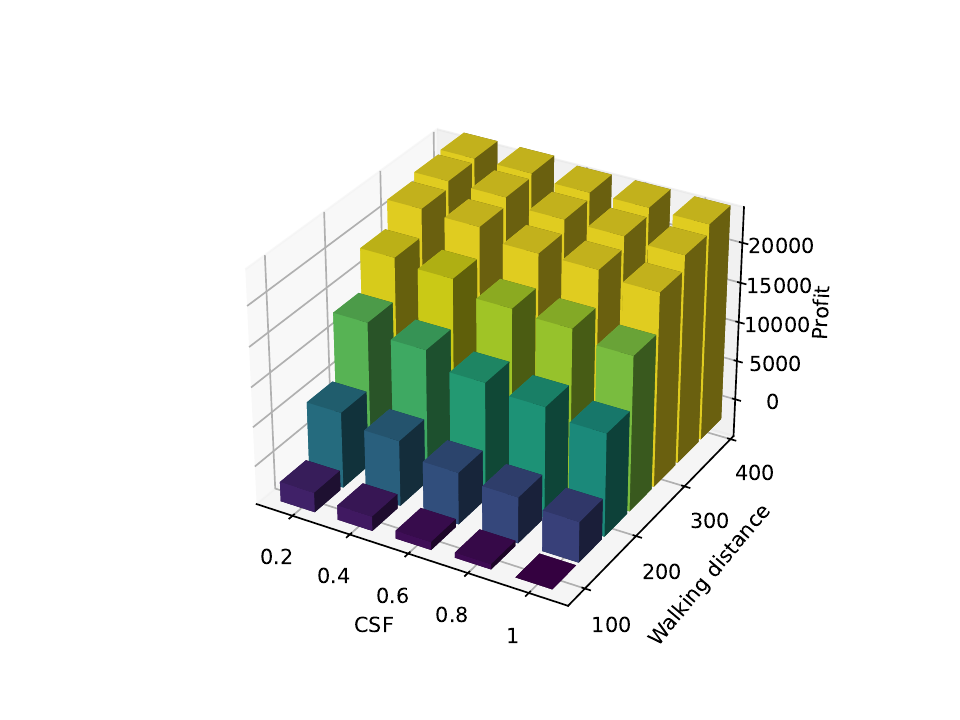}
    \captionsetup{format=hang}
	\caption{Average operator profit}
		\label{fig: revenue development}
\end{subfigure}
\begin{subfigure}[t]{.45\textwidth}
	\centering
 \includegraphics[width=1.1\textwidth]{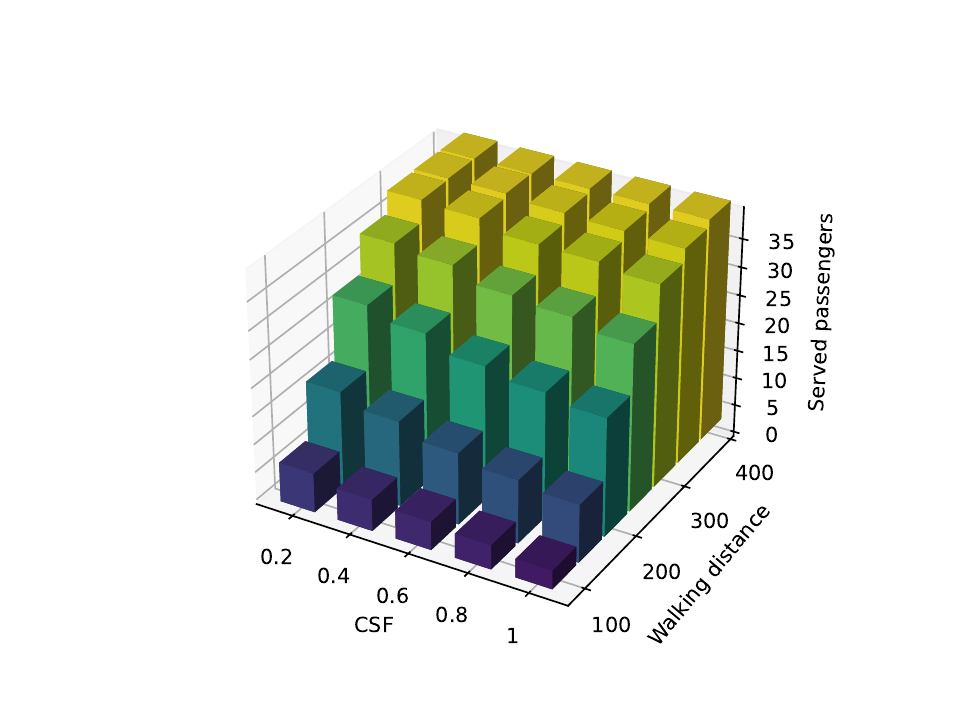}
    \captionsetup{format=hang}
	\caption{Average number of served passengers}
		\label{fig: served passengers development}
\end{subfigure}
\caption{Development of operator profit and number of served passengers based on route flexibility (CSF) and walking distance for Route 155}
\label{fig: walking distance and compulsory stop factor impact route 155}
\end{figure}

    Figure~\ref{fig: walking distance and compulsory stop factor impact route 155} shows that our previous observations also holds for Route 155.
    We see that with increasing route flexibility and passenger walking willingness, the profit increases from -4,283 to 23,812, and the number of served passengers increases from 3.3 to 39.7. Here, walking distances larger than 300 meters allow the operator to serve between 91-100\% of all issued passenger requests. 
    We also see that for shorter walking distances up to 250 meters, decreasing the CSF from 1 to 0.2 increases the profit by 25\% from 15,283 to 19,127 and the number of served passengers by 24\% from 28.6 to 35.6. At the same time, increasing the walking distance from 100 to 250 meters increases the profit from -3,093 to 17,230, and passengers served by 654\% from 5.0 to 32.7.
    
\begin{observation}
    With increasing route flexibility and a higher willingness to walk, operators can serve more passengers and generate higher profits. Solely increasing route flexibility does not suffice to significantly improve the number of served passengers. To achieve this, incentivizing passengers to walk slightly longer distances to and from their stops, e.g., through dynamic pricing, is necessary. 
\end{observation}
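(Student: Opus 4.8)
The plan is to establish this observation empirically, since it concerns how operational performance responds to two tunable design parameters rather than a structural property of the model. First I would fix a two-dimensional grid of parameter values: the compulsory stop factor $\mathrm{CSF} \in \{0.2, 0.4, 0.6, 0.8, 1\}$, which controls route flexibility, and the passenger walking threshold in $\{100, 150, 250, 300, 350, 400\}$ meters, which controls willingness to walk. For each pair $(\mathrm{CSF}, d)$ I would generate five instances using the SWM-based procedure of Section~\ref{sec: Results}, solve the operational route planning problem under uncertainty with the exact decomposition (Problem~\ref{prob: decomp Q}) using five future demand scenarios and uniform utility $750$, and record operator profit and number of served passengers averaged over the five instances. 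To make the marginal effect of each parameter identifiable, the underlying demand realizations (origin--destination pairs and building-level locations) should be held fixed across the grid, so that only the induced pick-up/drop-off sets and the set of compulsory stops vary.

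The first assertion --- joint improvement in both parameters --- follows by inspecting the two response surfaces: profit should rise monotonically from a negative value in the low-flexibility, short-walking corner (few served requests, detour costs dominate) to its maximum in the high-flexibility, long-walking corner, with served passengers tracking this trend. The second and third assertions are the crux and call for a sensitivity comparison along each axis. I would slice the surface while holding the other coordinate in the operationally relevant range (walking distance at most $250$ meters) and compare relative changes: decreasing $\mathrm{CSF}$ from $1$ to $0.2$ should yield only a modest relative gain in served passengers (on the order of $20\%$), whereas increasing $d$ from $100$ to $250$ meters should yield an order-of-magnitude gain (several hundred percent). The explanation to make explicit is that with a very small walking radius many requests induce pick-up or drop-off sets containing no reachable stop, so no amount of rerouting flexibility can serve them; flexibility helps only once a request is already coverable, which is why walking willingness is the dominant lever. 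Replicating the full grid on both Route~55 and Route~155 and checking that the qualitative picture is identical would establish robustness.

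The main obstacle I anticipate is computational: each grid cell requires solving many full-information subproblems (two per request decision per scenario, cf.\ Section~\ref{chapter: sample based decomposition}), and although Theorem~\ref{theorem: infeasible subproblem} keeps feasibility consistent across scenarios and the decomposition parallelizes, the low-$\mathrm{CSF}$, long-walking cells have the largest request sets and the heaviest routing subproblems, so controlling total runtime over the whole $5 \times 6 \times 2 \times 5$ design is the practical bottleneck. A secondary, interpretive concern is ensuring the observed ordering is not an artifact of the chosen utility value or scenario count; this is exactly why the earlier sensitivity analyses --- on the number of scenarios and on the CSF --- are needed as supporting evidence before the observation can be stated with confidence.
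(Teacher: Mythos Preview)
Your proposal is correct and follows essentially the same experimental design as the paper: the same $\mathrm{CSF}$ and walking-distance grids, the same solver configuration (exact decomposition, five scenarios, utility $750$), the same two routes, and the same sensitivity comparison showing a roughly $19$--$24\%$ gain in served passengers from flexibility versus a several-hundred-percent gain from increasing walking distance to $250$ meters. Your added methodological remarks --- fixing demand realizations across the grid, the coverage explanation for why walking radius dominates, and the runtime caveat for low-$\mathrm{CSF}$ cells --- go slightly beyond what the paper makes explicit but are entirely consistent with its setup.
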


\subsubsection*{Cost per passenger}
Apart from operator profit and the number of served passengers, we additionally want to analyze the development of the cost per passenger based on varying passenger walking willingness and route flexibility. To do so, we define the cost per passenger as the routing cost of the vehicle divided by the number of served passengers. 

\begin{figure}[!b]
\centering
\begin{subfigure}[t]{.45\textwidth}
	\centering
 \includegraphics[width=1.1\textwidth]{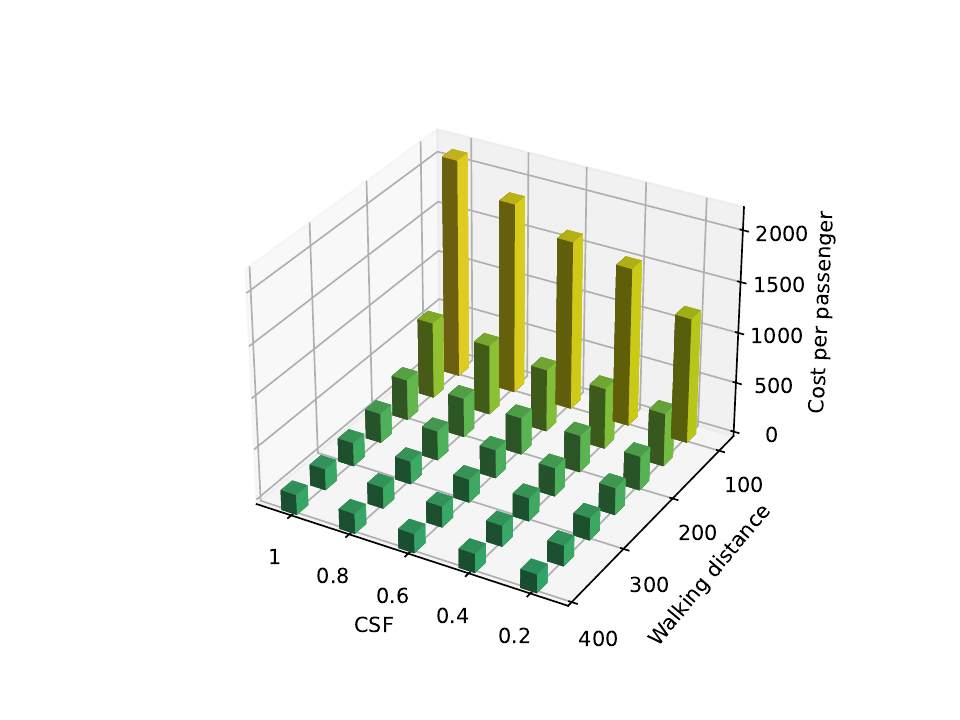}
    \captionsetup{format=hang}
	\caption{Route 55}
		\label{fig: revenue development}
\end{subfigure}
\begin{subfigure}[t]{.45\textwidth}
	\centering
 \includegraphics[width=1.1\textwidth]{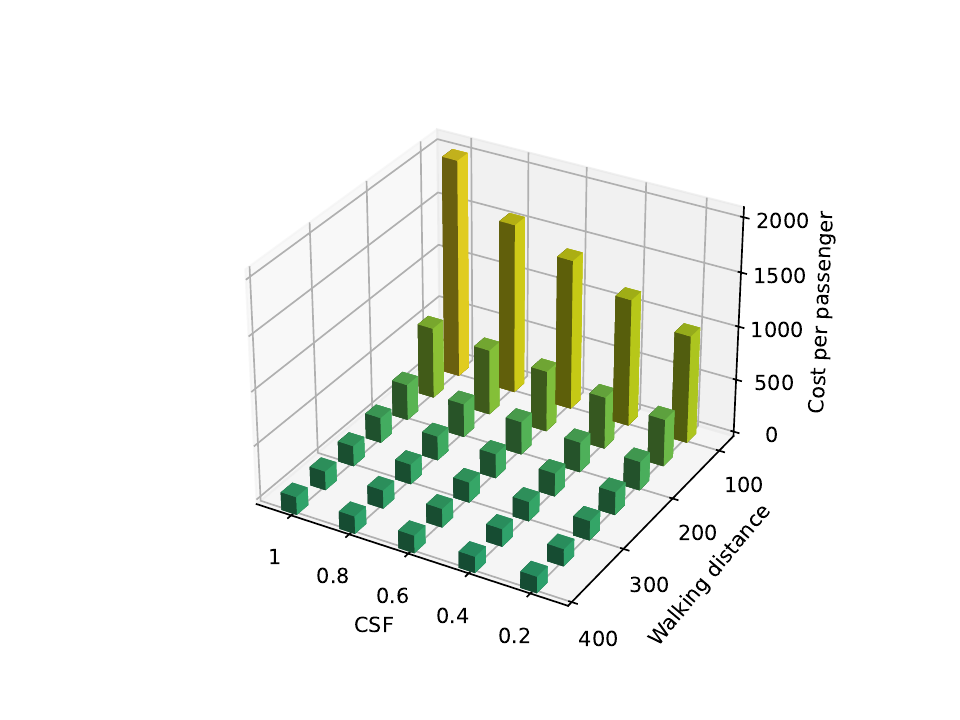}
    \captionsetup{format=hang}
	\caption{Route 155}
		\label{fig: served passengers development}
\end{subfigure}
\caption{Development of cost per passenger based on route flexibility (CSF) and walking distance}
\label{fig: walking distance and compulsory stop factor cost per passenger}
\end{figure}

Figure~\ref{fig: walking distance and compulsory stop factor cost per passenger} shows that by increasing the passenger walking distance and the flexibility of the route, the cost per passenger for the two routes reduces from 2,175 to 182 and 2,048 to 150, respectively. We see that even though the flexibility of the route reduces the cost per passenger between 3-43\% and 5-51\%, especially for lower walking distances under 250 meters, the passengers' willingness to walk has an even higher impact on the cost per passenger. Here, we see that increasing the walking distance from 100 to 250 meters reduces the cost per passenger for the two routes on average by 83\% from 1,712 to 284 and by 85\% from 1,449 to 223, respectively. Generally, over the different CSF configurations, the cost per passenger in both routes reduces between 88-90\% for a walking distance increase from 100 to 400 meters. 

\begin{observation}
If passengers are willing to walk 250 meters instead of 100 meters to and from stops, the cost per passenger reduces by 83-85\%. Increasing the flexibility of a route by reducing the CSF from 1 to 0.2 only reduces the cost per passenger by 43-51\%.

\end{observation}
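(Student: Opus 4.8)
The final statement is an empirical \emph{observation} summarizing the computational study rather than a formal claim, so the plan is to substantiate it directly from the cost-per-passenger surfaces in Figure~\ref{fig: walking distance and compulsory stop factor cost per passenger} rather than to derive it analytically. First I would fix the metric precisely: for each configuration I define the cost per passenger as the optimal routing cost $\sum_{i,j\in A} c_{ij}x_{ij}$ attained by the \gls{2S-SP} decomposition (Problem~\ref{prob: decomp Q}) divided by the number of served passengers, and evaluate it over the full parameter grid CSF $\in [0.2,0.4,0.6,0.8,1]$ and walking distance $\in [100,150,250,300,350,400]$, with a uniform utility of $750$ and each grid point averaged over the five instances per route.

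To establish the walking-distance claim I would isolate that lever by holding the CSF fixed and, for each CSF value, forming the relative reduction $1-c_{250}/c_{100}$, where $c_w$ denotes the cost per passenger at walking distance $w$; averaging these reductions across the five CSF values yields the per-route figures of $83\%$ for Route~55 and $85\%$ for Route~155, whose union is the stated $83$--$85\%$ band. Symmetrically, for the flexibility claim I would hold the walking distance fixed and form $1-c^{0.2}/c^{1}$, where $c^{\kappa}$ is the cost per passenger at CSF $\kappa$; sweeping this across walking distances produces reduction ranges of $3$--$43\%$ and $5$--$51\%$, and I would report the most favorable (maximal) reductions, $43\%$ and $51\%$, as the $43$--$51\%$ band. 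Reporting the \emph{maximal} CSF reduction is deliberate: it makes the comparison conservative, so the concluding assertion that walking willingness dominates route flexibility holds even under the interpretation most generous to flexibility.

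The main difficulty is aggregation discipline rather than any hard analysis. Because cost per passenger is a ratio of two separately optimized quantities, the averaging must be carried out on the ratios themselves, along a single parameter axis at a time (across CSF for the walking effect, across walking distance for the CSF effect), so that each reported band reflects a clean one-parameter slice and the two levers are not conflated. A secondary concern is numerical stability: at the smallest walking distances combined with high CSF very few passengers are served, so the denominator is small and the cost-per-passenger values are both large and volatile; before committing to the $83$--$85\%$ and $43$--$51\%$ ranges I would verify that the five-instance averaging renders these slices stable enough that the reported reductions are robust to instance-level variation.
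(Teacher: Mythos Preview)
Your proposal is correct and mirrors the paper's own substantiation of this observation: read the cost-per-passenger surfaces off Figure~\ref{fig: walking distance and compulsory stop factor cost per passenger}, isolate each lever by holding the other fixed, and report per-route reductions (83\% and 85\% for the walking-distance lever; the maxima 43\% and 51\% of the 3--43\% and 5--51\% ranges for the CSF lever). The only minor discrepancy is in aggregation order for the walking-distance figure: the paper first averages the cost-per-passenger values across CSF (obtaining 1{,}712 versus 284 for Route~55 and 1{,}449 versus 223 for Route~155) and then computes a single reduction, whereas you average the per-CSF reductions; on this data both orderings land on the same headline percentages, so the distinction is immaterial.
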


\section{Conclusion and future work}
\label{sec: Conclusion}
We introduced the operational planning problem under uncertainty of a \acrlong{DAS}. Furthermore, we proposed an algorithmic framework that allows an operator to plan in real-time which passengers to serve in a \acrlong{DAS} line. 
To this end, we formulated the operational route planning problem's full information version as a mixed-integer program, modeled the operational route planning problem under uncertainty as a \acrlong{MDP}, and utilized a rolling horizon approach to compute a policy via a two-stage stochastic program in each timestep to decide whether to accept or reject a passenger. Furthermore, we determined the deterministic equivalent of our approximation through sample-based approximation. This allowed us to decompose the deterministic equivalent of our two-stage stochastic program into several full information planning problems that can be solved in parallel efficiently. Additionally, we proposed a heuristic and a myopic approach. 

We performed extensive computational experimentation to analyze how our different algorithms perform on different routes, the number of future passenger demand scenarios, passenger walking willingness, and route flexibility. 
For this purpose, we generated a large set of instances based on real-world data provided to us by Stadtwerke München GmbH, a Munich public transportation provider. 
In a comparison between full information policies and policies under uncertainty, we showed that in our case study, a limited amount of future request scenarios suffices to obtain good policies under uncertainty, especially in routes with very low and high flexibility. 

From a managerial perspective, we show that by switching a fixed-line bus route to a \acrlong{DAS}, an operator can increase revenue by up to 49\% and the number of served passengers by up to 35\% while only increasing the travel distance of the bus by 14\%.
Furthermore, we showed that even though we can achieve an increase in passengers served between 19-24\% by making a route more flexible, increasing passengers' willingness to walk from 100 to 250 meters increases this number by 620-654\%. Additionally, such an increase in passenger willingness to walk also reduced the cost per passenger served by 83-85\%, while an increase in the flexibility of the route only reduced it by 43-51\%.

Furthermore, we showed that our exact decomposition consistently derives the best solutions in terms of operator revenues and served passengers, followed by our heuristic approach and, lastly, our myopic approach. This performance comes at the cost of higher serial computation time and doubles the resources needed in parallel computation compared to our heuristic approach, which is 17-57\% faster. Additionally, we saw an increase in problem complexity, i.e., serial computation time with growing route flexibility. 

Future research may expand on the current methodology of our two-stage stochastic program by developing a machine-learning enhanced policy that reduces the computational effort during operations. Additionally, our current passenger distribution is based on the assumption that passengers utilizing a \acrlong{DAS} have a similar demand distribution as passengers utilizing fixed-line bus systems. Investigating the impact of different passenger request distributions on the operators' revenue remains an interesting avenue for future research. 
Finally, considering passenger behavior, e.g., dynamic pricing models that influence passenger acceptance or rejection decisions, poses an interesting area for future study.

\section*{Acknowledgment}
This work was supported by the German Federal Ministry of Education and Research under Grant 03ZU1105FA. Additionally, we thank Stadtwerke München GmbH for the pleasant collaboration and for providing us with the necessary data.


%
\singlespacing{
\bibliographystyle{model5-names}
\bibliography{ms}} 
\newpage
\onehalfspacing
\begin{appendices}
	\normalsize
\end{appendices}
\end{document}